\documentclass[11pt,leqno]{amsart}

\usepackage{latexsym}
\usepackage{amssymb}
\usepackage{amsmath}
\usepackage{amsthm}
\usepackage{verbatim}
\usepackage{pdfsync}
\usepackage[margin=1.2in]{geometry}
\usepackage{tikz}
\usepackage[curve,matrix,arrow,frame,tips]{xy}

\numberwithin{equation}{section}  


\newcommand{\TT}{T}

\usepackage{color}
\usepackage{graphicx}

\makeatletter
\newcommand*{\bigcdot}{}
\DeclareRobustCommand*{\bigcdot}{%
  \mathbin{\mathpalette\bigcdot@{}}%
}
\newcommand*{\bigcdot@scalefactor}{.5}
\newcommand*{\bigcdot@widthfactor}{1.15}
\newcommand*{\bigcdot@}[2]{%
  \sbox0{$#1\vcenter{}$}
  \sbox2{$#1\cdot\m@th$}%
  \hbox to \bigcdot@widthfactor\wd2{%
    \hfil
    \raise\ht0\hbox{%
      \scalebox{\bigcdot@scalefactor}{%
        \lower\ht0\hbox{$#1\bullet\m@th$}%
      }%
    }%
    \hfil
  }%
}
\makeatother

\begin{document}



\newcommand{\zxz}[4]{\begin{pmatrix} #1 & #2 \\ #3 & #4 \end{pmatrix}}
\newcommand{\abcd}{\zxz{a}{b}{c}{d}}
\newcommand{\kzxz}[4]{\left(\begin{smallmatrix} #1 & #2 \\ #3 &
#4\end{smallmatrix}\right) }
\newcommand{\kabcd}{\kzxz{a}{b}{c}{d}}




\newcommand{\A}{{\mathbb A}}
\newcommand{\C}{{\mathbb C}}
\newcommand{\F}{{\mathbb F}}
\newcommand{\G}{{\mathbb G}}
\newcommand{\R}{{\mathbb R}}
\newcommand{\Q}{{\mathbb Q}}
\newcommand{\X}{{\mathbb X}}
\newcommand{\Z}{{\mathbb Z}}
\newcommand{\HZ}{\widehat{\Z}}


\newcommand{\rom}[1]{\text{\rm #1}}
\renewcommand{\roman}{\rm}

\newcommand{\Aut}{\text{\rm Aut}}
\newcommand{\CH}{\widehat{\text{\rm CH}}}
\newcommand{\cha}{{\text{\rm char}}}
\newcommand{\CHe}{\text{\rm CHeeg}}
\newcommand{\degh}{\widehat{\text{\rm deg}}}
\newcommand{\degH}{\widehat{\text{\rm deg}}}    
\newcommand{\diag}{{\text{\rm diag}}}
\newcommand{\Diff}{\text{\rm Diff}}
\newcommand{\disc}{\text{\rm discr}}
\renewcommand{\div}{\text{\rm div}}
\newcommand{\divh}{\widehat{\text{\rm div}}}
\newcommand{\DS}{\text{\rm DS}}
\newcommand{\Ei}{\text{\rm Ei}}
\newcommand{\End}{\text{\rm End}}
\newcommand{\ev}{{\text{\rm ev}}}
\newcommand{\Gal}{\text{\rm Gal}}
\newcommand{\GL}{\text{\rm GL}}
\newcommand{\GSpin}{\text{\rm GSpin}}
\newcommand{\Hom}{\text{\rm Hom}}
\newcommand{\hor}{{\text{\rm horiz}}}
\newcommand{\id}{\text{\rm id}}
\newcommand{\im}{\text{\rm im}}
\renewcommand{\Im}{\text{\rm Im}}
\newcommand{\inv}{{\text{\rm inv}}}
\newcommand{\Jac}{\text{\rm Jac}}
\newcommand{\Leray}{{\mathrm L}}
\newcommand{\Lie}{\text{\rm Lie}}
\newcommand{\Mp}{\text{\rm Mp}}
\newcommand{\mult}{\text{\rm mult}}
\newcommand{\MW}{\text{\rm MW}}
\newcommand{\MWt}{\widetilde{\MW}}
\newcommand{\new}{\text{\rm new}}
\newcommand{\Nm}{\text{\rm Nm}}
\newcommand{\ord}{\text{\rm ord}}
\newcommand{\PGL}{\text{\rm PGL}}
\newcommand{\Pic}{\text{\rm Pic}}
\newcommand{\Pich}{\widehat{\text{\rm Pic}}}
\newcommand{\pr}{\text{\rm pr}}
\newcommand{\ra}{\text{\rm ra}}
\newcommand{\Rao}{\mathrm R}
\renewcommand{\Re}{\text{\rm Re}}
\newcommand{\sgn}{\text{\rm sgn}}
\newcommand{\sig}{\text{\rm sig}}
\newcommand{\SL}{\text{\rm SL}}
\newcommand{\SO}{\text{\rm SO}}
\newcommand{\Sp}{\text{\rm Sp}}
\newcommand{\Spec}{\text{\rm Spec}\, }
\newcommand{\Spf}{\text{\rm Spf}}
\newcommand{\supp}{\text{\rm supp}}
\newcommand{\Sym}{{\text{\rm Sym}}}
\newcommand{\tr}{\text{\rm tr}}
\newcommand{\type}{\text{\rm type}}
\newcommand{\Ver}{\text{\rm Vert}}
\newcommand{\vol}{\text{\rm vol}}
\newcommand{\Wald}{\text{\rm Wald}}


\newcommand{\Cal}{\mathcal}     

\newcommand{\AHH}{\hat{\Cal A}}   
\newcommand{\CHH}{\hat{\Cal C}}
\newcommand{\MM}{\Cal D}          
\newcommand{\MMb}{\MM^\bullet}
\newcommand{\ssplit}{\text{\bf split}}
\newcommand{\whcc}{\widehat{\Cal C}}
\newcommand{\CO}{\mathcal O}
\newcommand{\COH}{\widehat{\CO}}
\newcommand{\M}{\Cal M}
\newcommand{\OB}{\Cal O_B}
\newcommand{\XX}{\mathcal X}
\newcommand{\bXX}{\bar\XX}
\newcommand{\wc}{\hat{\Cal C}}
\newcommand{\wch}{\wc^{\text{\rm hor}}}
\newcommand{\ZZ}{\Cal Z}
\newcommand{\ZH}{\widehat{\Cal Z}}   
\newcommand{\Zh}{\widehat{\Cal Z}}
\newcommand{\ZZh}{\ZZ^{\text{\rm hor}}}
\newcommand{\ZZv}{\ZZ^{\text{\rm ver}}}
\newcommand{\ZZhh}{\Zh^{\text{\rm hor}}}
\newcommand{\ZZhv}{\Zh^{\text{\rm ver}}}


\newcommand{\nass}{\noalign{\smallskip}}
\newcommand{\snass}{\noalign{\vskip 2pt}}
\newcommand{\tent}[1]{ \vphantom{\vbox to #1pt{}} }   


\newcommand{\scr}{\scriptstyle}
\newcommand{\disp}{\displaystyle}

\font\cute=cmitt10 at 12pt
\font\smallcute=cmitt10 at 9pt
\newcommand{\kay}{{\text{\cute k}}}
\newcommand{\smallkay}{{\text{\smallcute k}}}

\renewcommand{\a}{\alpha}
\renewcommand{\b}{\beta}
\newcommand{\e}{\epsilon}
\renewcommand{\l}{\lambda}
\renewcommand{\L}{\Lambda}
\renewcommand{\o}{\omega}
\renewcommand{\O}{\Omega}
\renewcommand{\P}{\Phi}
\newcommand{\ph}{\varphi}
\newcommand{\phih}{\widehat{\phi}}
\newcommand{\wphi}{\widehat{\phi}}
\newcommand{\phit}{\widetilde{\phi}}
\newcommand{\s}{\sigma}
\newcommand{\vth}{\vartheta}


%

\newcommand{\Pt}{P}
\newcommand{\Ph}{\P}
\newcommand{\Pht}{\tilde \P}   
\newcommand{\Kt}{K}           
\newcommand{\Mt}{M}

\newcommand{\pht}{\widetilde{\phi}}
\newcommand{\It}{I}
\newcommand{\Jt}{\widetilde{J}}
\newcommand{\lt}{\widetilde{\l}}
\newcommand{\vp}{\varpi}

\newcommand{\bom}{{\boldsymbol{\o}}}
\newcommand{\hbom}{\widehat{\bom}}
\newcommand{\ff}{{\bold f}}
\newcommand{\fsp}{\boldsymbol{f}_{\!\rm sp}}
\newcommand{\fev}{\boldsymbol{f}_{\!\rm ev}}
\newcommand{\fb}{\boldsymbol{f}}
\newcommand{\J}{\und{J}'}
\newcommand{\JJ}{\bold J'}
\newcommand{\V}{\bold V}
\newcommand{\xx}{\bold x}

\newcommand{\g}{{\mathfrak g}}
\renewcommand{\H}{\mathfrak H}


\newcommand{\back}{\backslash}
\newcommand{\CT}[1]{\operatornamewithlimits{CT}_{#1}}
\renewcommand{\d}{\partial}
\newcommand{\db}{\bar\partial}
\newcommand{\dbar}{\bar{\partial}}
\newcommand{\gs}[2]{\langle \,#1,#2\,\rangle}
\newcommand{\Gt}{G}
\newcommand{\hfal}{h_{\text{\rm Fal}}}
\newcommand{\II}{\int^\bullet}
\newcommand{\isoarrow}{\ {\overset{\sim}{\longrightarrow}}\ }
\newcommand{\lisoarrow}{\ {\overset{\sim}{\longleftarrow}}\ }
\newcommand{\limdir}[1]{\underset{\underset{#1}{\rightarrow}}{\lim}}
\newcommand{\lan}{\operatorname{\langle}\hskip .5pt}
\newcommand{\ran}{\,\operatorname{\rangle}}
\newcommand{\lra}{\longrightarrow}
\newcommand{\doublelra}{\ {\overset{\scr\lra}{\scr\lra}}\ }
\newcommand{\nat}{\natural}
\newcommand{\notmid}{\mkern-5mu\not\mkern5mu\mid}
\newcommand{\Optoc}{\text{\rm Opt}(O_{c^2d},O_B)}
\newcommand{\psim}{\psi^{-}}
\newcommand{\qeq}{\ \overset{??}{=}\ }
\newcommand{\sh}{\sharp}
\newcommand{\thCH}{\theta^{\text{\rm ar}}}
\newcommand{\wht}{\widehat{\theta}}     
\newcommand{\triv}{1\!\!1}
\renewcommand{\tt}{\otimes}
\newcommand{\und}[1]{\underline{#1}}
\newcommand{\z}{z}  

\newcommand{\thMW}{\theta^{\text{\rm ar}}}
\newcommand{\tph}{\widetilde{\widehat\phi_1}}
\newcommand{\Pet}{\text{\rm Pet}}





\newcommand{\thing}{ \raisebox{-6.4pt}{$\tilde{\tilde{}}$}  }   
\newcommand{\smallthing}{ \raisebox{-4.4pt}{$\scr\tilde{\tilde{}}$}  }
\newcommand{\ttilde}[1]{\overset{\smash{\thing}}{#1}}
\newcommand{\smallttilde}[1]{\overset{\smash{\smallthing}}{#1}}
\newcommand{\downhookarrow}{\hbox{$\downarrow\hskip -6.1pt\raisebox{6pt}{$\cap$}$}}


\providecommand{\bysame}{\makebox[3em]{\hrulefill}\thinspace}   
\newcommand{\hfb}{\hfill\break}
\newcommand{\margincom}[1]{\marginpar{\bf\raggedright #1}}
\newcommand{\Sec}{\S}


\numberwithin{equation}{section}
\setcounter{section}{0}
\setcounter{MaxMatrixCols}{15}


\newtheorem{theo}{Theorem}[section]
\newtheorem{lem}[theo]{Lemma}
\newtheorem{prop}[theo]{Proposition}
\newtheorem{cor}[theo]{Corollary}
\newtheorem*{main}{Main Theorem}
\newtheorem*{atheo}{Theorem A}
\newtheorem*{btheo}{Theorem B}
\newtheorem{conj}[theo]{Conjecture}
\newtheorem{rem}[theo]{Remark}      
\newtheorem{defn}[theo]{Definition}

\newcommand{\OO}{\text{\rm O}}
\newcommand{\UU}{\text{\rm U}}

\newcommand{\OK}{O_{\smallkay}}
\newcommand{\DI}{\mathcal D^{-1}}

\newcommand{\pre}{\text{\rm pre}}

\newcommand{\Bor}{\text{\rm Bor}}
\newcommand{\Rel}{\text{\rm Rel}}
\newcommand{\rel}{\text{\rm rel}}
\newcommand{\Res}{\text{\rm Res}}
\newcommand{\TG}{\widetilde{G}}

\newcommand{\OL}{O_{\Lambda}}
\newcommand{\OLB}{O_{\Lambda,B}}

\newcommand{\p}{\varpi}

\newcommand{\cutter}{\vskip .1in\hrule\vskip .1in}

\parindent=0pt
\parskip=10pt
\baselineskip=15pt

\newcommand{\PP}{\mathcal P}
\renewcommand{\OO}{\mathcal O}
\newcommand{\BB}{\mathbb B}
\newcommand{\OBB}{O_{\BB}}
\newcommand{\Max}{\text{\rm Max}}
\newcommand{\Opt}{\text{\rm Opt}}
\newcommand{\OH}{O_H}

\newcommand{\phhat}{\widehat{\phi}}
\newcommand{\thetahat}{\widehat{\theta}}

\newcommand{\lbold}{\text{\boldmath$\l$\unboldmath}}
\newcommand{\abold}{\text{\boldmath$a$\unboldmath}}
\newcommand{\cbold}{\text{\boldmath$c$\unboldmath}}
\newcommand{\ebold}{\text{\boldmath$e$\unboldmath}}
\newcommand{\aabold}{\text{\boldmath$\a$\unboldmath}}
\newcommand{\gbold}{\text{\boldmath$g$\unboldmath}}
\newcommand{\hbold}{\text{\boldmath$h$\unboldmath}}
\newcommand{\obold}{\text{\boldmath$\o$\unboldmath}}
\newcommand{\fbold}{\text{\boldmath$f$\unboldmath}\!}
\newcommand{\rbold}{\text{\boldmath$r$\unboldmath}}
\newcommand{\ffbold}{\und{\fbold}}
\newcommand{\sbold}{\text{\boldmath$\s$\unboldmath}}
\newcommand{\tbold}{\text{\boldmath$t$\unboldmath}}
\newcommand{\qbold}{\text{\boldmath$q$\unboldmath}}
\newcommand{\gammabold}{\text{\boldmath$\gamma$\unboldmath}}

\newcommand{\boldCC}{\text{\boldmath$\mathcal C$\unboldmath}}
\newcommand{\kbold}{\text{\boldmath$k$\unboldmath}}
\newcommand{\fbolds}{\fbold\,}

\newcommand{\deltaBB}{\delta_{\BB}}
\newcommand{\kappaBB}{\kappa_{\BB}}
\newcommand{\aboldBB}{\abold_{\BB}}
\newcommand{\lboldBB}{\lbold_{\BB}}
\newcommand{\gboldBB}{\gbold_{\BB}}
\newcommand{\bbold}{\text{\boldmath$\b$\unboldmath}}

\newcommand{\phbold}{\text{\boldmath$\ph$\unboldmath}}

\newcommand{\fff}{\phi}
\newcommand{\spp}{\text{\rm sp}}

\newcommand{\pob}{\mathfrak p_{\bold o}}
\newcommand{\kob}{\mathfrak k_{\bold o}}
\newcommand{\gob}{\mathfrak g_{\bold o}}
\newcommand{\pobp}{\mathfrak p_{\bold o +}}
\newcommand{\pobm}{\mathfrak p_{\bold o -}}

\newcommand{\mmm}{\mathfrak m}


\newcommand{\bb}{\frak b}

\newcommand{\bbbold}{\text{\boldmath$b$\unboldmath}}

\renewcommand{\ll}{\,\frak l}
\newcommand{\uC}{\underline{\Cal C}}
\newcommand{\uZZ}{\underline{\ZZ}}
\newcommand{\B}{\mathbb B}
\newcommand{\CL}{\text{\rm Cl}}

\newcommand{\pp}{\frak p}

\newcommand{\OKp}{O_{\smallkay,p}}

\renewcommand{\top}{\text{\rm top}}

\newcommand{\bF}{\bar{\mathbb F}_p}

\newcommand{\beq}{\begin{equation}}
\newcommand{\eeq}{\end{equation}}


\newcommand{\Dl}{\Delta(\l)}
\newcommand{\mm}{{\bold m}}

\newcommand{\FD}{\text{\rm FD}}
\newcommand{\LDS}{\text{\rm LDS}}

\newcommand{\dcM}{\dot{\Cal M}}
\newcommand{\bpm}{\begin{pmatrix}}
\newcommand{\epm}{\end{pmatrix}}

\newcommand{\GW}{\text{\rm GW}}

\newcommand{\uk}{\bold k}
\newcommand{\uo}{\text{\boldmath$\o$\unboldmath}}

\newcommand{\ww}{\bold w}
\newcommand{\yy}{\bold y}

\newcommand{\Om}{\text{\boldmath$\Omega$\unboldmath}}

\newcommand{\phnat}{{}^\nat\phi}
\newcommand{\om} {\text{\boldmath$\o$\unboldmath}}
\newcommand{\omnat}{\breve\om}

\newcommand{\bino}[2]{{#1\choose{#2}}}

\renewcommand{\ev}{\ell^\vee}
\newcommand{\kk}{\frak k}

\newcommand{\gw}[2]{\langle\langle #1,#2 \rangle\rangle}
\newcommand{\gwrm}{\text{\rm gw}}

\newcommand{\cpar}{\sbold}


\newcommand{\N}{\Cal N}
\newcommand{\GG}{\Cal G}
\newcommand{\GGu}{\und{\GG}}
\newcommand{\LGG}{\Cal L \Cal G}

\newcommand{\car}{\text{\rm char}}

\newcommand{\Con}{\text{\rm Con}}

\newcommand{\h}{\mathfrak h}
\newcommand{\prz}{{\bf prz}}
\newcommand{\hc}{\text{\rm hc}}

\renewcommand{\z}{\mathfrak z}
\newcommand{\x}{\und{x}}
\newcommand{\har}{\text{\rm har}}

\newcommand{\qq}{\qbold}
\newcommand{\qqq}{\mathfrak q}
\newcommand{\Skew}{\text{\rm Skew}}

\newcommand{\Dp}{D^+}
\newcommand{\Djp}{D^{(j),+}}
\newcommand{\ch}{\text{\rm CH}}
\renewcommand{\ra}{\rightarrow}
\newcommand{\wtg}{\widetilde{G'}}

\newcommand{\Sb}{\mathcal S^{\bigcdot}}

\newcommand{\sfs}{\Sb_F}
\newcommand{\sfsnull}{\Sb_F\cup \{0\}}
\newcommand{\sfp}{\mathcal S_{F,\ge0}^{\vee}}
\newcommand{\FFS}{\operatorname{FFS}}
\newcommand{\FFSS}{\FFS^{\bigcdot}}


\newcommand{\now}{\count0=\time 
\divide\count0 by 60
\count1=\count0
\multiply\count1 by 60
\count2= \time
\advance\count2 by -\count1
\the\count0:\the\count2}


\title{Remarks on generating series for special cycles}

\author{ Stephen S. Kudla}
\address{Department of Mathematics\\University of Toronto\\
40 St George St, BA6290\\
Toronto, ON M5S 2E4\\
Canada
}
\email{skudla@math.toronto.edu}
\thanks{Research supported by an NSERC Discovery Grant}
\keywords{Orthogonal Shimura varieties, algebraic cycles, Hilbert-Siegel modular forms}
\subjclass[2010]{Primary: 14C25, 11F27; Secondary: 11F46, 14G35}
\begin{abstract}  In this note, we consider special algebraic cycles on the Shimura variety $S$ 
associated to a quadratic space $V$ over a totally real field $F$, $|F:\Q|=d$,  of signature
$$((m,2)^{d_+},(m+2,0)^{d-d_+}), \qquad 1\le d_+<d.$$
For each $n$, $1\le n\le m$, 
there are special cycles $Z(T)$ in $S$, of codimension $nd_+$,  indexed by totally positive semi-definite matrices with coefficients in the ring of integers $O_F$.
The generating series for the 
classes of these cycles in the cohomology group $H^{2nd_+}(S)$ are Hilbert-Siegel modular forms of parallel weight $\frac{m}2+1$. 
One can form analogous generating series for the classes of the special cycles in the Chow group $\text{\rm CH}^{nd_+}(S)$. 
For $d_+=1$ and $n=1$, the modularity of these series was proved by Yuan-Zhang-Zhang.  
In this note we prove the following: 
Assume the Bloch-Beilinson conjecture on the injectivity 
of  Abel-Jacobi maps. Then the Chow group valued generating series for special cycles of codimension $nd_+$ on $S$ is modular for all $n$ with $1\le n\le m$. 
\end{abstract} 

\maketitle

\section{Introduction}

The goal of the present note is to probe the limits of what we know about certain special cycle generating series. 
Suppose that $V$ is a quadratic space over a totally real field $F$ of degree $d$ such that the signature of $V$ 
is 
\beq\label{sig-V}
((m,2), \dots, (m,2), (m+2,0),\dots ,(m+2,0)) = ((m,2)^{d_+},(m+2,0)^{d-d_+}), \quad d_+>0.
\eeq
We also suppose that $m>0$. The special cycles in the associated orthogonal Shimura variety $S$
have codimensions $nd_+$ for $1\le n\le m$. Thus there is a significant difference between the case $d_+=1$, where there are special cycles of every codimension, 
and the case $d_+>1$, where there are not.  The modularity of Chow 
group\footnote{We work with Chow groups with rational coefficients and write $\ch(X)$ rather than $\ch(X)_\Q$.} valued generating series in the case $d_+=1$ is established in many 
cases; we will review what is known in a moment. However, when $d_+>1$ the modularity of such series 
is more problematic, due to a lack of any 
systematic source of relations.  

As a concrete example\footnote{This example arose in discussions with Luis Garcia and Jan Bruinier
and was 
the initial motivation for this paper.}, suppose that $B$ is a totally indefinite division quaternion algebra over a real quadratic field $F$ which is  not a base change from $\Q$. The space $V$ of trace zero elements in $B$ with quadratic form given by the reduced norm has signature $((1,2),(1,2))$. 
The special cycles on the associated Shimura surface $S$ are $0$-cycles indexed by totally positive elements of the ring of integers 
$O_F$, and the generating series for their degrees is a Hilbert modular form of weight $(3/2,3/2)$.  The modularity of the generating series for 
their classes in $\ch^2(S)$ is not known however and would depend on the existence of many relations among these  $0$-cycles.  Recall 
that relations arise from collections $\{(C_i,f_i)\}$ where $C_i$ is a curve on $S$,  $f_i$ is a meromorphic function on $C_i$, and the 
$0$-cycle on $S$ given by $\sum_i \div_{C_i}(f_i)$  is zero in $\ch^2(S)$. 
But there are no evident curves on $S$ and hence any relations among the special $0$-cycles have no evident modular construction 
and would have to arise in some other way. Of course, the situation is the same whenever $d_+>1$,  since there are no special cycles of 
codimension $nd_+\!\!-1$ available to generate relations.

The modularity of generating series for certain divisor classes on the orthogonal Shimura variety $S$ 
associated to a quadratic space $V$ of signature $(m,2)$ over $\Q$ was proved by Borcherds, 
\cite{Bo1}.  His proof depends on the existence of a sufficient supply of meromorphic functions on $S$ with explicitly known divisors, constructed 
by means of his regularized theta lift.  They provide the relations among the special divisors in $\text{\rm CH}^1(S)$ and these relations among the 
coefficients of the generating series imply modularity.  The problem of showing modularity of analogous generating series for special cycles of 
higher codimension,  series with coefficients in $\text{\rm CH}^n(S)$, was suggested in \cite{K.duke}.  
In his thesis \cite{wei.zhang.thesis}, Wei Zhang showed that such series are indeed the $\qq$-expansions of Siegel modular forms of genus $n$ under the assumption that the 
series are convergent. His proof is based on an induction, beginning with the result of Borcherds for divisors,  and a calculation of the 
Fourier-Jacobi coefficients of the generating series.  Subsequently, Bruinier and Westerholt-Raum \cite{bruinier-raum} established the required convergence
by an argument based on an analysis of the dimensions of the spaces of Jacobi forms that arise as Fourier-Jacobi coefficients. 
Such an argument has its roots in the work of \cite{aoki} and \cite{ibukiyama}. 

Over a totally real field $F$ and in the case $d_+=1$,  the generating series for special cycles of codimension $n$ was considered in \cite{K.duke}, 
where the modularity of its image under the cycle class map to the (Betti) cohomology group $H^{2n}(S)$ is shown to be a consequence of the results of \cite{KM1}, \cite{KM2}, \cite{KM3}.
Using the vanishing of the first Betti number of such varieties\footnote{The low dimensional exceptions are handled by the embedding trick which we explain in Section~\ref{section8}.}, 
it is shown in \cite{YZZ} that modularity of the $\ch^1(S)$-valued generating series for special divisors follows from the modularity of the $H^2(S)$-valued series. 
Moreover, it is shown in \cite{YZZ}  that the inductive argument of \cite{wei.zhang.thesis} can be carried over to the $d_+=1$ case and yields modularity of the $\ch^n(S)$-valued 
generating series, again assuming the convergence of the series.  At present, no analogue of the Bruinier-Westerholdt-Raum result is available for totally real fields of degree $d>1$, and so
modularity of the $\ch^n(S)$-valued generating series in remains open. 

In the present paper we consider the case in which $d_+$ is arbitrary.  Since we want to avoid a discussion of compactifications, we will assume that $V$ is anisotropic and hence, 
when $m\ge 3$, that $d_+<d$.  The definition of both the connected and weighted special cycles given in \cite{K.duke} for $d_+=1$ goes over to the general case with almost no change. 
One important difference, however, is that the role played by the hyperplane section line bundle in Section 6 of \cite{K.duke} is now played by a class
$\cbold_S\in\ch^{d_+}(S)$ constructed as a product of the Chern classes of inverses of tautological bundles. The weighted special cycles $[Z(\TT,\ph)]\in \ch^{nd_+}(S)$ are indexed by pairs $(\TT,\ph)$ 
where $\TT\in \Sym_n(F)$ is
positive semi-definite\footnote{We write $\Sym_n(F)_{\ge0}$ for the space of such totally positive semi-definite matrices.} at each archimedean place of $F$
and $\ph\in S(V(\A_f)^n)$ is a Schwartz function on the finite ad\`eles of $V$.  We establish the analogues for general $d_+$ of various properties of these cycles 
proved, for $d_+=1$,  in \cite{K.duke} and 
in \cite{YZZ}. For example, there is a product formula in the Chow ring $\ch^\bullet(S)$, Proposition~\ref{prop5.1}, 
\beq\label{intro-prod-form}
Z(\TT_1,\ph_1)\cdot Z(\TT_2,\ph_2) = \sum_{\substack{ \TT \in \Sym_{n_1+n_2}(F)_{\ge0}\\ \snass \TT = \bpm \scr \TT_1&*\\ \scr{}^t*&\scr\TT_2\epm}} Z(\TT,\ph_1\tt\ph_2)\ \in \ch^{(n_1+n_2)d_+}(S).
\eeq
In the case $d_+=1$, this is proved in \cite{YZZ}, while the analogous cup product formula for images in cohomology is proved in \cite{K.duke}. 
The proof we give in Section~\ref{section5} for general $d_+$ makes use of the intersection theory from Fulton \cite{fulton.book},  a computation of excess bundles, and some Jaffee Lemma arguments, cf. Lemma~\ref{jaffee.lem} and Proposition~\ref{prop-good-cover},  which allow us to pass to suitable covers to 
achieve regular embeddings. Also, there is a formula for the pullback of special cycles to Shimura subvarieties associated to totally positive definite subspaces $U$ of $V$. 
This formula plays a key role in the embedding trick.

The generating series for special cycles of codimension $nd_+$ is the formal $\qq$-series 
\beq\label{ch-gen} 
\phi_n(\tau,\ph, S) = \sum_{\TT\in \Sym_n(F)_{\ge0}} [Z(\TT,\ph)]\, \qq^\TT\ \in \ \ch^{nd_+}(S)[[\qq]],
\eeq
where $\tau = (\tau_1, \dots, \tau_d)\in \H_n^d$, $\ph\in S(V(\A_f)^n)$, and 
\beq\label{def-qT}
\qq^T = e( \sum_{j=1}^d \tr(\s_j(T)\tau_j)).
\eeq
Here $\H_n$ is the Siegel space of genus $n$ and $\Sigma= \{\s_j\}_{1\le j\le d}$ is the set of archimedean embeddings of $F$. 

The product formula implies the following identity for the formal generating series:
\beq\label{prod-gen-series}
\phi_n(\bpm \tau_1&{}\\{}&\tau_2\epm, \ph_1\tt\ph_2,S) = \phi_{n_1}(\tau_1,\ph_1,S)\cdot \phi_{n_2}(\tau_2,\ph_2,S),
\eeq
whose analogue for generating series for cohomology classes was proved in \cite{K.duke}, for $d_+=1$,  using the theta series. 
Here the product on the right side is take in the Chow ring of $S$. 

The  series (\ref{ch-gen}) is said to be modular if, for every complex valued linear functional on $\ch^{nd_+}(S)$ the 
formal Fourier series\footnote{This is made more precise in Section~\ref{section8}.}  
\beq\label{ch-gen-lambda} 
\phi_n(\tau,\ph, S,\lambda) = \sum_{\TT\in \Sym_n(F)_{\ge0}} \lambda\big(\ [Z(\TT,\ph)]\ \big)\, \qq^\TT\ \in \ \C[[\qq]],
\eeq
is absolutely convergent and the resulting holomorphic function on $\H_n^d$ is a Hilbert-Siegel modular form.

For example, the image 
$$\phi_n(\tau,\ph, S,\text{\rm cl}) = \sum_{\TT\in \Sym_n(F)_{\ge0}} \text{\rm cl}[Z(\TT,\ph)]\, \qq^\TT\ \in \, H^{2nd_+}(S)[[\qq]]$$
of this series under the cycle class map $\text{\rm cl}=\text{\rm cl}_{nd_+}:\ch^{nd_+}(S) \ra H^{2nd_+}(S)$ 
is the $\qq$-expansion of a Hilbert-Siegel modular form of parallel weight $(\frac{m}2+1, \dots, \frac{m}2+1)$, again by
the results of \cite{KM1}, \cite{KM2}, \cite{KM3}. 
 Of course, if the cycle class map happens to be injective, then the modularity of (\ref{ch-gen}) follows from this immediately. 
 As observed in \cite{YZZ}, such injectivity would 
 result from a combination of the Bloch-Beilinson conjecture, which predicts that the kernel of $\text{\rm cl}_{nd_+}$ maps injectively\footnote{Recall that our Chow groups are taken with rational 
 coefficients and all of our varieties and special cycles are defined over number fields.} to the intermediate 
 Jacobian $J^{nd_+}(S)$ under the Abel-Jacobi map, and the vanishing of $H^{2nd_+-1}(S)$, which implies that  $J^{nd_+}(S)=0$. 
 
 A main result of this paper is that we can use a variant of this observation to obtain the following. 
 \begin{theo}\label{theo1.1} Assume the Bloch-Beilinson conjecture. 
 Then the $\ch^{nd_+}(S)$-valued generating series (\ref{ch-gen}) is modular for all $n$. 
 \end{theo} 
 
 The idea is to combine the embedding trick with a peculiar property of the Hodge diamond for orthogonal 
Shimura varieties. 
If $U_0$ is a totally positive quadratic space of dimension $4\ell$ over $F$,  the orthogonal sum $\tilde V =U_0+V$ has 
signature $((m+4\ell,2)^{d_+},(m+2+4\ell,0)^{d-d_+})$, and there is a corresponding Shimura variety  $\tilde S$ with an embedding 
$$\rho: S\lra \tilde S$$
of Shimura varieties. The image of the (formal) generating series $\phi_n(\tau,\ph,\tilde S)$ under the pullback
$$\rho^*: \ch^{nd_+}(\tilde S) \lra \ch^{nd_+}(S)$$
is a finite linear combination of products 
\beq\label{pullback-rel}
\theta(\tau,\ph^0)\,\phi_n(\tau,\ph^1,S)
\eeq
where $\theta(\tau,\ph^0)$ is a theta series for 
$\ph^0\in S(U_0(\A_f)^n)$ and $\phi_n(\tau,\ph^1,S)$ is a (formal) $\ch^{nd_+}(S)$-valued generating series 
for $\ph^1\in S(V(\A_f)^n)$.  On the other hand, the results of Vogan and Zuckermann, explained in detail in Section~\ref{section7}, imply that 
\beq\label{VZ-van-1}
H^{2n d_+-1}(\tilde S)=0, \quad \text{for $\ell>nd_+$}.
\eeq
Therefore, if we assume the Bloch-Beilinson conjecture, the series $\phi_n(\tau,\ph,\tilde S)$ is modular, and Theorem~\ref{theo1.1}
follows from the pullback relations (\ref{pullback-rel}).   This pullback argument is analogous to the argument  in \cite{YZZ}, p1159. 
In fact, in our case, the proof of this consequence given in Section~\ref{section8} is quite non-trivial and was provided by Jan Bruinier.  It 
depends on the normality of the Baily-Borel compactification of the Hilbert-Siegel modular variety and some results of Kn\"oller, \cite{knoeller}.
 
\begin{rem} (i) Theorem~\ref{theo1.1} provides support for the conjectured modularity of the Chow group valued generating series, even in the `problematic' $d_+>1$ cases. 
Note that the Bloch-Beilinson conjecture serves as an existence result for the required (but non-evident) relations. \hfb
(ii) One can obviously consider 
analogous unitary Shimura varieties with respect to a CM field over $F$ associated to a Hermitian space of 
signature $((m,1)^{d_+},(m+1,0)^{d-d_+})$. The special cycles occur in codimensions $nd_+$ so that, when $d_+>1$,  the modularity of the Chow group valued 
generating series for such cycles case is again problematic. Unfortunately, there is no evident Hodge diamond argument in this case.\hfb 
\end{rem}

We now give a brief summary of the contents of this paper.  In Section~\ref{section2}, we define the special cycles and the generating series for 
them in classical language.  We also explain how the modularity of the Chow group valued generating series follows from the Bloch-Beilinson 
conjecture together with a vanishing theorem for low odd degree cohomology of orthogonal Shimura varieties. 
In Section~\ref{section3}, we point out a couple of natural questions/problems that arise when $d_+>1$.  Section~\ref{section4} is the core of paper. 
Here, working in classical language, we give formulas for the intersection products on special cycles using the machinery of Fulton \cite{fulton.book}. 
There are several basic ingredients. First, using the Jaffee Lemma, Lemma~\ref{jaffee.lem} and its variant, Proposition~\ref{prop-good-cover}, 
we pass to covers so that the embedding of the special cycles and the components of their intersections are regular embeddings.
In this situation, the intersection product can be expressed in terms of Chern classes and Segre classes of normal cones, Proposition~\ref{prop-normal-cones}.  
These, in turn, can be computed in terms of an excess bundle which is finally related, cf. Proposition~\ref{prop-excess-bundle},  
to the `co-tautological' bundle $\boldCC$, defined in (\ref{bold-CS}).  Thus we obtain a nice formula for the intersection product, 
in classical language, Theorem~\ref{theo4.12}. 
In Section~\ref{section5}, we give the definition of weighted special cycles in ad\`elic language.  These cycles are compatible with pullbacks 
and hence define classes in the Chow group $\ch^{nd_+}(S) := \varinjlim_K \ch^{nd_+}(S_K)$ as $K$ runs over compact open subgroups of $G(\A_f)$, where 
$G = R_{F/\Q}\GSpin(V)$. The product formula (\ref{intro-prod-form}) and Proposition~\ref{prop5.1} for weighted special cycles then follows from the classical version.
A formula for pullbacks of weighted special cycles to Shimura subvarieties is proved in Section~\ref{section6}, Proposition~\ref{prop-6.2}.  This provides the 
basis for the first step in the embedding trick discussed in Section~\ref{section8}, an identity, (\ref{embed-step1}), expressing the pullback of the formal generating 
series for an ambient orthogonal Shimura variety as a product of the formal generating series for $S$ and a Hilbert-Siegel theta function.  
The fact that the modularity of the ambient generating series for a family of such identities implies the modularity of the series for $S$ is proved in 
Section~\ref{sectionFFS}. The vanishing of the low odd degree cohomology of an orthogonal Shimura variety as a consequence of the results of Vogan and Zuckerman, 
is explained in Section~\ref{section7}. Finally, in Section~\ref{section10}, we work out in detail the relation between the weighted special cycles 
as defined in Section~\ref{section5} and an alternative definition analogous to that used in \cite{K.duke}. This relation, which involves a careful 
discussion of the connected component and the structure of the special $0$-cycles arising when $n=m$,  will be useful in certain applications.

\section{Generating series for special cycles: classical version}\label{section2}

In this section, we set up the generating series for algebraic cycles on our orthogonal Shimura variety over a totally real field. 
Here we formulate things in classical language so that the geometric aspects are clearer. An ad\`elic version is described in Section~\ref{section4}. 

Let $F$ be a totally real field of degree $d=|F:\Q|$ and let $\Sigma= \{\s_j\}$ be the set of archimedean embeddings of $F$.
Let $V$, $(\ ,\ )$ be a quadratic space over $F$ with 
$$\sig(V_{j}) = \begin{cases} (m,2) &\text{for $1\le j \le d_+$}\\
\nass
(m+2,0)&\text{for $d_+<j\le d$,}
\end{cases}
$$
where $V_j = V\tt_{F,\s_j}\R$.   We will write $\Sigma_+ = \{ \s_j\mid j\le d_+\}$. 
Let
$$\Dp= \prod_{1\le j \le d_+} \Djp,$$
where $\Djp$ is one component of the space $D^{(j)}$ of oriented negative $2$-planes  in $V_j$.  Thus $\Dp$ is connected and $\dim_\C \Dp = md_+$. 
The space 
$$D = \prod_j D^{(j)}$$ 
has $2^{d_+}$ connected components and will be used in the ad\`elic version 
in Section~\ref{section4},

Let $L\subset V$ be an $O_F$-lattice on which $Q(x) = \frac12(x,x)$ is $O_F$-valued and let 
$$L^\vee = \{ \, x\in V(F)\mid (x,L)\subset \d_F^{-1}\,\}\ \supset L$$
be the dual lattice\footnote{Later, when we consider the Weil representation, this definition of $L^\vee$ will be appropriate when
we use the `standard' additive character $\psi_0$ of $\Q_\A/\Q$ and the character $\psi = \psi_0\circ \tr_{F/\Q}$ for $F_\A/F$.}, 
 where $\d_F^{-1}$ is the inverse different of $F/\Q$. 
Let 
$$ \Gamma_L = \{ \gamma \in \SO(V) \mid \gamma L = L, \gamma\vert{L^\vee/L}= \text{id}\ \},$$
and let $\Gamma \subset \Gamma_L$ be a neat subgroup of finite index which stabilizes the component $\Dp$. In particular, $\Gamma$ is torsion free. 
The quotient 
\beq\label{classical-S}
S = S_\Gamma = \Gamma\back \Dp \ \overset{\pi}{\longleftarrow} \Dp, \qquad \pi = \pi_\Gamma,
\eeq
is then (isomorphic to the set of complex points of) a smooth quasi-projective variety over $\C$ and is projective if $d_+<d$. 
It is a connected Shimura variety with a canonical model over a number field, but we will not need this for the moment. 
Let 
$\ch^i(S)$ be the Chow group of algebraic cycles of codimension $i$ on $S$ modulo rational equivalence
and let 
$$\ch^\bullet(S) = \oplus_{i=0}^{md_+} \ch^i(S)$$
be the Chow ring of $S$. We frequently make the identification $\Pic(S)= \ch^1(S)$, $\mathcal L \mapsto c_1(\mathcal L)$. 

Special cycles are defined as follows.  For a subspace $W\subset V$ which is totally positive definite for $Q$, 
let 
\beq\label{DpW}
\Dp_W = \prod_j \Djp_W, 
\eeq
where
$$
\Djp_W = \{ z_j\in \Djp\mid z_j \subset W^\perp\tt_{F,\s_j}\R\ \}.
$$
In particular, the codimension of $\Dp_W$ in $\Dp$ is $r(W) d_+$ where $r(W) = \dim_F W$, and  
\beq\label{basic-cycle}
Z(W) =Z(W)_\Gamma = \pi_\Gamma(\Dp_W)
\eeq 
is an algebraic cycle of codimension $r(W) d_+$ in $S$.  The corresponding class in $\ch^{r(W) d_+}(S)$ 
will be denoted by $[Z(W)]$. 

On the quadric model 
\beq\label{quadric-model}
D^{(j)} \simeq \{ \ w_j\in (V_j)_\C\mid (w_j,w_j)=0, \ (w_j,\bar w_j)<0\ \}/\C^\times \  \subset \ \mathbb P((V_j)_\C),
\eeq
let $\Cal L_j^\nat$ be the restriction of the tautological line bundle on $\mathbb P((V_j)_\C)$. 
Let $\mathcal L_j = \pr_j^*\mathcal L_j^\nat$ be the pullback of $\mathcal L_j^\nat$ to $D$,  where $\pr_j$ is the projection onto the $j$th factor. 
The restriction of this line bundle to $\Dp$ descends to $S$, where we denote it by the same symbol, and we obtain a class $c_1(\mathcal L_j)\in \text{\rm CH}^1(S)$. 
 Let 
\beq\label{def-cS}
\cbold_S = \prod_{j=1}^{d_+} c_1(\mathcal L_j^\vee) \ \in \text{\rm CH}^{d_+}(S).
\eeq
We will also need the vector bundle, the co-tautological bundle, 
\beq\label{bold-CS}
\boldCC_S = \oplus_j \mathcal L_j^\vee
\eeq
of rank $d_+$. 
The fibers of this bundle are naturally $F$-vector spaces and 
$$\cbold_S = c_{d_+}(\boldCC_S)\cap [S]$$
where $c_{d_+}(\boldCC_S)$ is the top Chern class of $\boldCC_S$.  Here we are using the conventions of Chapter 3 of \cite{fulton.book}.
Later, when we vary $\Gamma$, we will write $\cbold_\Gamma$ and $\boldCC_\Gamma$ to indicate the dependence on $\Gamma$.

For $x\in V^n$, let $W(x)$ be the subspace of $V$ spanned by the components of $x$ and let $r(x)= \dim W(x)$. 
Let 
\beq\label{shifted-cycle}
[Z(x)] = \begin{cases}  [Z(W(x))] \cdot \cbold_S^{n-r(x)}&\text{if $W(x)$ is positive definite,}\\
\nass
0&\text{otherwise.}
\end{cases}
\eeq
Thus, $[Z(x)]\in \ch^{n d_+}(S)$. For example, $[Z(0)] = \cbold_S^n$. 
When we vary $\Gamma$, we will write $[Z(x)_\Gamma]$. 

The following equivariance property will be useful later. If $\eta\in \SO(V)(F)$, under the natural isomorphism, 
\beq\label{brak-eta} 
[\eta]: S_\Gamma \isoarrow S_{\eta \Gamma \eta^{-1}}, \qquad z\mapsto \eta z,
\eeq
\beq\label{eta-trans} 
[\eta]_* Z(W)_\Gamma = Z(\eta W)_{\eta \Gamma \eta^{-1}} \qquad\text{and}\qquad [\eta]_* [Z(x)]_\Gamma = [Z(\eta x)]_{\eta \Gamma \eta^{-1}}.
\eeq

For the lattice $L$, let $\mathbb S(L) = \C[(L^\vee/L)^n]$ be the group algebra of $(L^\vee/L)^n$.
Define the generating series 
$$\phi_n(\tau, S) = \sum_{\mu\in (L^\vee/L)^n} \sum_{\substack{ x\in \mu+ L^n\\ \snass \mod \Gamma}}  [Z(x)]\, \qq^{Q(x)}\cdot \ebold_\mu\quad \in \ 
\ch^{nd_+}(S)\tt \mathbb S(L)[[\qq]],$$
where $\{\ebold_\mu\}$ is the coset basis for $\mathbb S(L)$ and, for $T\in \Sym_n(F)$ and $\tau\in (\H_n)^d$, $\qq^T$ is given by (\ref{def-qT}).

There is a unitary representation $\rho_L$ of $\Gamma'$ on the space\footnote{As usual, $\mathbb S(L)$ can be identified with a subspace of 
the Schwartz space $S(V(\A_f)^n)$ of finite adeles over $F$ of $V^n$ and the representation $\rho_L$ has a natural construction in this language.}  $\mathbb S(L)$
where $\Gamma' = \Sp_n(O_F)$, if $m$ is even, s and is a $2$-fold central extension of this group, if $m$ is odd. 

The expectation is that $\phi_n(\tau, S)$ is the $\qq$-expansion of a Hilbert-Siegel modular form of genus $n$ and parallel weight 
$\kappa=\frac{m}2+1$. This means that, for any linear 
functional $\l: \text{\rm CH}^n(S) \lra \C$, the series 
$$\phi_n(\tau, S,\l) = \sum_{\mu\in (L^\vee/L)^n} \sum_{\substack{ x\in \mu+ L^n\\ \snass \mod \Gamma}}  \l([Z(x)])\, \qq^{Q(x)}\cdot \ebold_\mu,$$
with coefficients in $\mathbb S(L)$,  is termwise absolutely convergent and that the resulting analytic function on $\H_n^d$  satisfies
$$\phi_n(\gamma(\tau), S, \l) = \prod_j \det(c\tau_j+d)^{\kappa} \,\rho_L(\gamma) \,\phi_n(\tau, S,\l), $$
for all $\gamma\in \Gamma'$. 

As motivation, one has the fact that the image of $\phi_n(\tau,S)$ under the cycle class map 
\beq\label{cycle-class}
\text{\rm cl}^{nd_+}: \ch^{nd_+}(S) \lra H^{2nd_+}(S)
\eeq
is  
the $\qq$-expansion of a Hilbert-Siegel modular form, 
$$\phi_n(\tau, S,\text{\rm cl}) = \sum_{\mu\in (L^\vee/L)^n} \sum_{\substack{ x\in \mu+ L^n\\ \snass \mod \Gamma}}  
\text{\rm cl}([Z(x)])\,\qq^{Q(x)}\cdot \ebold_\mu\quad \in H^{2nd_+}(S)\tt\mathbb S(L), $$
by the results of \cite{KM1}, \cite{KM2} and \cite{KM3}. 

Of course, as observed in \cite{YZZ}, when the cycle class map \ref{cycle-class} is injective, the modularity of $\phi_n(\tau, S,\text{\rm cl})$ implies that of $\phi_n(\tau, S)$. 
Let $\text{\rm CH}^{N}(S)^0 = \ker(\text{\rm cl}^{N})$ be the subgroup of $\text{\rm CH}^{N}(S)$ of cohomologically trivial cycles
and let 
$$\text{AJ}_{N}: \text{\rm CH}^{N}(S)^0 \lra J^{N}(S)$$
be the Abel-Jacobi map to the $N$th intermediate Jacobian of $S$. The Bloch-Beilinson Conjecture asserts that the map  $\text{AJ}N{N}$ is injective up to torsion. 
Here recall that $S$ is defined over a number field. 
On the other hand, we will show in Section~\ref{section7} that $J^N(S) = 0$ for $2N-1 < m-\left[\frac{m}2\right]$. 
This proves the following. 
\begin{prop}  Assume the Bloch-Beilinson conjecture.  Then the $\text{\rm CH}^{n d_+}(S)$-valued series $\phi_n(\tau, S)$ is modular for 
$$nd_+ < \begin{cases}  \frac{m+2}4 &\text{for $m$ even,}\\
\nass
\frac{m+3}4 &\text{for $m$ odd.}
\end{cases}
$$ 
\end{prop}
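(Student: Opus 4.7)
The strategy is to show that under the stated numerical hypothesis the cycle class map $\text{\rm cl}^{nd_+}:\text{\rm CH}^{nd_+}(S)\to H^{2nd_+}(S)$ is injective on the rational Chow group; modularity of $\phi_n(\tau,S)$ then follows as a direct consequence of the modularity of the cohomology-valued series $\phi_n(\tau,S,\text{\rm cl})$ established in \cite{KM1}, \cite{KM2}, \cite{KM3}. Indeed, if $\text{\rm cl}^{nd_+}$ is injective, any linear functional $\lambda$ on $\text{\rm CH}^{nd_+}(S)$ extends to a linear functional $\tilde\lambda$ on $H^{2nd_+}(S)$, and then $\lambda(\phi_n(\tau,S))=\tilde\lambda(\phi_n(\tau,S,\text{\rm cl}))$ inherits termwise absolute convergence, holomorphicity on $\H_n^d$, and the Hilbert-Siegel transformation law of parallel weight $\kappa=\frac{m}{2}+1$ from the cohomological statement.

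To establish this injectivity, I would combine the two ingredients highlighted in the introduction. First, by the vanishing theorem deferred to Section~\ref{section7} (a consequence of the Vogan-Zuckerman classification of cohomological representations), one has $H^{2N-1}(S,\C)=0$ whenever $2N-1<m-\left[\frac{m}{2}\right]$. Applied with $N=nd_+$, this forces the intermediate Jacobian $J^{nd_+}(S)$ to vanish, since it is the quotient of $H^{2nd_+-1}(S,\C)$ by a lattice and a Hodge-filtered subspace. Second, because $S$ is defined over a number field and our Chow groups have $\Q$-coefficients, the Bloch-Beilinson conjecture asserts that the Abel-Jacobi map $\text{AJ}_{nd_+}:\text{\rm CH}^{nd_+}(S)^0\to J^{nd_+}(S)$ is injective. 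The two statements together give $\text{\rm CH}^{nd_+}(S)^0=0$, whence $\text{\rm cl}^{nd_+}$ is injective as required.

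A quick case analysis reconciles the bound: for $m$ even one has $m-\left[\frac{m}{2}\right]=m/2$ and the inequality $2nd_+-1<m/2$ rearranges to $nd_+<(m+2)/4$, while for $m$ odd one has $m-\left[\frac{m}{2}\right]=(m+1)/2$ and the inequality rearranges to $nd_+<(m+3)/4$. The only nontrivial ingredient in the argument is the cohomological vanishing of Section~\ref{section7}; once that input is in hand, the use of Bloch-Beilinson and the transfer of modularity via extension of linear functionals are essentially formal, so that step is the main obstacle to internalize.
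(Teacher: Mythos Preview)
Your proof is correct and follows exactly the argument the paper gives in the paragraph preceding the proposition: combine the Vogan--Zuckerman vanishing $H^{2N-1}(S)=0$ for $2N-1<m-\left[\frac{m}{2}\right]$ (hence $J^N(S)=0$) with Bloch--Beilinson to force injectivity of the cycle class map, then transfer modularity from the cohomology-valued series of \cite{KM1}, \cite{KM2}, \cite{KM3}. Your explicit case analysis of the bound is a nice addition that the paper leaves implicit.
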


Now we apply the `embedding trick', as described in \cite{YZZ}, p.1159. 
Our description here is imprecise; a precise version using the the ad\`elic formulation of the generating series of Section~\ref{section4}
will be given in Section~\ref{section8}.  Let $U_0$ be totally positive quadratic space over $F$ of dimension $4\ell$,  and let $\widetilde{V}= U_0\oplus V$. 
Suppose that $L_{U_0}$ 
is an even integral lattice in $U_0$ and let $\widetilde{L}= L_{U_0}\oplus L$.   Note that 
\beq\label{sig-embed}
\sig(\widetilde{V}) =( (m+4\ell,2)^{d_+}, (m+2+4\ell,0)^{d-d_+})
\eeq 
in the obvious notation. Let $\widetilde{D}^+ = \prod_{j=1}^{d_+} \widetilde{D}^{(j),+}$ be the associated symmetric space and take a neat subgroup $\widetilde{\Gamma}$ of finite index 
in the group $\Gamma_{\widetilde{L}}$. We suppose\footnote{This will be handled in a better way in Section~\ref{section7}} that $\widetilde{\Gamma}\cap \SO(V) = \Gamma$ and 
thus have an embedding 
$$j: S \lra \widetilde{S} = \widetilde{\Gamma}\back \widetilde{D}.$$
If $\ell$ is sufficiently large, e.g., $\ell>nd_+$ will always work, and assuming the Bloch-Beilinson conjecture, the series 
$\phi_n(\tau,\widetilde{S})$ is modular of weight $\kappa + 2 \ell$, valued in $\text{\rm CH}^{n d_+}(\widetilde{S})\tt \mathbb S(\widetilde L)$.  
On the other hand, the pullback of this series under $j$, should be expressible as a finite linear combinations of products of theta series associated to $L_{U_0}$, of weight $2\ell$,  and 
components of generating series $\phi_n(\tau,S)$. Using a suitable cancellation property, the modularity of $\phi_n(\tau,S)$ will follow for all $n$!
Thus, up to several compatibilities and the pullback formula and cancellation properties which will be carefully formulated in Section~\ref{section7}, we have the following. 
\begin{theo}\label{MAIN-prop}  Assume the Bloch-Beilinson conjecture.  
Then the $\text{\rm CH}^{n d_+}(S)$-valued series $\phi_n(\tau, S)$ is modular for all $n$. 
\end{theo}

Note that the range of vanishing of odd Betti numbers given by Corollary~\ref{cor-better-van} plays a crucial role here.

\section{Problematic examples}\label{section3}

In this section, we make some observations about relations among special cycles. 
The key point is that the codimensions of special cycles are multiples of $d_+$.   Thus, when $d_+=1$, there are special cycles 
defined in each codimension and it is reasonable to imagine that relations among cycles of codimension $n$ arise from meromorphic functions on special 
cycles of codimension $n-1$. When $d_+>1$, this is no longer the case and there is no evident source of such relations, whereas the modularity of the
generating series for such cycles implies that such relations must exist in abundance. Thus there is an essential difference between the cases $d_+=1$ and $d_+>1$. 

In the case $d_+=1$ and $F=\Q$, one might imagine that the meromorphic functions on special cycles of codimension $n-1$ giving rise to relations among special cycles of codimension $n$ 
are those constructed by Borcherds on such 
$m-n+1$-dimensional orthogonal Shimura subvarieties. In fact, the Zhang, Bruinier-Westerholt-Raum proof of modularity does not proceed in this way and this suggests the following problem. \hfb
{\bf Problem 1.} In the case $F=\Q$, what are the relations among the special cycles 
$$Z(T) = \sum_{\substack{x\in (L^\vee)^n\\ \snass Q(x) = T\\ \snass \mod \Gamma}} Z(x)$$
implied by the modularity of the generating series. Can these be described in terms of Borcherds forms on $Z(y)$'s where $y\in (L^\vee)^{n-1}$?

Now suppose that $d_+>1$.   
 
{\bf Example.}   
Suppose that $F$ is a real quadratic field and let $B$ be a division quaternion algebra over $F$ that is split at the archimedean places $\s_1$ and $\s_2$. 
We also suppose that $B$ is not a base change of an indefinite quaternion algebra over $\Q$, e.g., that $B_v$ is a division algebra for some non-archimedean place over a 
rational prime $p$ that is not split in $F$.  Let $V$ be the subspace of elements $x\in B$ with $\tr(x)=0$, where $\tr: B\ra F$ is the reduced trace, and let $Q(x)= \nu(x)$ 
be the reduced norm of $x$. Then 
$\sig(V) = ((1,2),(1,2))$
so that $m=1$ and $d_+=2$ in our notation above.  Choosing $L$ and $\Gamma$, we obtain a smooth projective surface $S$ with a large supply of $0$-cycles $Z(x)$
defined by vectors $x\in V$ with $Q(x)\gg0$.   
The associated generating series is
\beq\label{0-cycle-gen}
\phi_1(\tau, S) = \sum_{\mu\in (L^\vee/L)^n} \sum_{\substack{ x\in \mu+ L\\ \snass \mod \Gamma}}  [Z(x)]\cdot \qq^{Q(x)}\cdot \ebold_\mu\quad \in \ 
\text{\rm CH}^{2}(S)\tt \mathbb S(L)[[\qq]].
\eeq
The image of this series under the cycle class map 
$$\text{\rm cl}^2: \text{\rm CH}^2(S) \lra H^4(S,\C) =\C$$
is a Hilbert modular form 
$$\phi_1(\tau, S,\text{\rm cl}) = \sum_{\mu\in (L^\vee/L)^n} \sum_{\substack{ x\in \mu+ L\\ \snass \mod \Gamma}}  \deg Z(x)\cdot \qq^{Q(x)}\cdot \ebold_\mu$$
of parallel weight $\frac32$ valued in $\mathbb S(L)$. 
The modularity of $\phi_1(\tau, S)$ must entail a large number of relations among the $0$-cycles $Z(x)$, but such relations would arise from collections $\sum_j (C_j,f_j)$ 
where $C_j$ is a curve on $S$ and $f_j$ is a meromorphic function on $C_j$.  But there are no evident curves on $S$!
Nonetheless, Proposition~\ref{MAIN-prop} asserts the modularity of $\phi_1(\tau, S)$ assuming the Bloch-Beilinson conjecture. 

{\bf Problem 2.}  Find explicit relations among the $0$-cycles $Z(x)$ on $S$. 

{\bf Problem 3.} Use them to prove modularity of (\ref{0-cycle-gen}).

\section{Some intersection theory}\label{section4}

In this section, we record some results about the geometry and intersections of the special cycles. 
We begin with the classical version of Section~\ref{section2} and will pass back and forth, via GAGA, between topological 
and algebraic geometric arguments. In particular, since we will be working with projective varieties over $\C$, we follow the treatment of 
intersection theory given in Fulton, \cite{fulton.book}.  Thus we sometimes write $A_k(X)$ for the group of $k$-cycles modulo rational equivalence
and note that $A_k(X) = \ch^{n-k}(X)$ if $X$ is smooth of dimension $n$. 

\begin{rem}  The intersection of (weighted ad\`elic) special cycles was considered in \cite{YZZ} in the case $d_+=1$. 
It may be that their formulation can be extended to the case $d_+>1$, but we felt that the more classical approach 
given here with complete proofs provides a better insight into the geometry.   
\end{rem}

As in Section~\ref{section2}, we suppose that $\Gamma$ is a neat subgroup of $\Gamma_L$ preserving the component $\Dp$
and is, in particular, torsion free. 

\subsection{Some preliminary results.}

If $\Gamma'\subset \Gamma$ is a subgroup of finite index, the map $\pr: S_{\Gamma'}\ra S_\Gamma$ is, topologically, a covering map and hence, algebraically, is 
finite \'etale of degree $|\Gamma:\Gamma'|$. 
The map 
$$\pr^*: \ch^\bullet(S_\Gamma) \lra \ch^\bullet(S_{\Gamma'})$$ 
is a ring homomorphism. Since, for $\a\in A_k(S_\Gamma)$, 
$$\pr_*\pr^*(\a) =  |\Gamma:\Gamma'|\,\a,$$ 
$\pr^*$ is injective. 
In particular,
$$\pr_*(\pr^*(\a)\cdot \pr^*(\b)) =   \pr_*(\pr^*(\a\cdot\b)) = |\Gamma:\Gamma'|\, \a\cdot \b,$$
so that identities involving products of elements of  $\ch^\bullet(S_\Gamma)$ can be checked on their pullbacks. 
Also note that 
$$\pr_*(\cbold_{\Gamma'}) = \cbold_\Gamma.$$

\begin{rem} For a totally positive subspace $U$ in $V$, the cycle $D_U$ in $D$ is a holomorphic and totally geodesic submanifold and 
$\Gamma$ acts on $D$ by holomorphic isometries.  
Thus, if the restriction of the (topological covering) map $\pi_\Gamma:\Dp \ra \Gamma\back \Dp$ to $\Dp_U$ is injective, the image is a totally geodesic holomorphic 
submanifold and the inclusion  of this image in $S_\Gamma$ is (algebraically) a regular embedding. 
Similarly, for totally positive subspaces $W\subset U\subset V$, if the restriction of $\pi_\Gamma$ to $\Dp_U$ and to $\Dp_W$ is injective, 
then the image of $\Dp_W$ is a totally geodesic submanifold of the image of $\Dp_U$ and the inclusion is a regular embedding. 
\end{rem}

The following result and its variants will be useful.  It holds in a much more general context, c.f., \cite{RS}.  For convenience, we include the proof. 
\begin{lem}\label{jaffee.lem}
 \cite{millson.rag}, \cite{RS}.  Let $U$ be a subspace of $V$ which is totally positive definite for $Q$, and let $\s_U$ be the isometry of $V$ 
with $\s_U\vert_U = -1$ and $\s_U\vert_{U^\perp}=+1$.  \hfb 
(i)  Let $\tilde\Gamma_U$ be the 
centralizer of $\s_U$ in $\Gamma$, i.e., the stabilizer in $\Gamma$ of the subspace $U$.   Let  $\Gamma_U$ be the subgroup of $\tilde\Gamma_U$ 
whose elements act trivially on $U$.  Then, since $\Gamma$ is neat, $\tilde\Gamma_U = \Gamma_U$. \hfb
(ii)  (Jaffee Lemma) Suppose that $\s_U \Gamma\s_U = \Gamma$.  
Then the map 
$$\tilde\Gamma_U\back \Dp_U \lra \Gamma\back \Dp$$
is injective.\hfb 
In particular,  this implies that $Z(U)_\Gamma= \Gamma_U\back \Dp_U$ is a submanifold of $S_\Gamma$ and 
the map 
$$f:Z(U)_\Gamma \lra S_\Gamma$$
is a regular embedding of codimension $r(U) d_+$. 
\end{lem}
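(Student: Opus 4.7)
The plan is to establish (i) first, then deduce (ii) by a short conjugation argument, and finally read off the regular embedding claim. The engine in both parts is neatness, applied once in a Kronecker-style form and once as triviality of point stabilizers.

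First I handle (i). Let $\gamma \in \tilde\Gamma_U$. At each archimedean place $\sigma$ the restriction $\gamma|_{U_\sigma}$ lies in the compact group $O(U_\sigma)$, so its complex eigenvalues lie on the unit circle. Because $\gamma$ preserves the $O_F$-lattice $L$, the characteristic polynomial of $\gamma$ on $V$ has $O_F$-coefficients, and therefore the eigenvalues of $\gamma|_U$ (a subset of those of $\gamma$) are algebraic integers. Varying $\sigma$ and the extension of $\sigma$ to an embedding $\bar{\mathbb Q}\hookrightarrow\mathbb C$ shows every Galois conjugate of such an eigenvalue has absolute value one, so by Kronecker's theorem each eigenvalue of $\gamma|_U$ is a root of unity. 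Neatness of $\Gamma$ forces every such root of unity to equal $1$, and since $\gamma|_U$ is semisimple (lying in a compact orthogonal group) this gives $\gamma|_U = \mathrm{id}$, i.e., $\gamma \in \Gamma_U$.

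For (ii), the geometric cornerstone is that $\sigma_U$ fixes $\Dp_U$ pointwise: each $z \in \Dp_U$ is a negative plane contained in $U^\perp_\sigma$, where $\sigma_U$ acts as $+1$. Suppose $\gamma z_1 = z_2$ with $z_1, z_2 \in \Dp_U$ and $\gamma \in \Gamma$. Then
\[
(\gamma^{-1}\sigma_U\gamma\sigma_U)\cdot z_1 \;=\; \gamma^{-1}\sigma_U\gamma\cdot z_1 \;=\; \gamma^{-1}\sigma_U\cdot z_2 \;=\; \gamma^{-1}\cdot z_2 \;=\; z_1.
\]
The hypothesis $\sigma_U\Gamma\sigma_U=\Gamma$ places $\gamma^{-1}\sigma_U\gamma\sigma_U$ in $\Gamma$, and the $\Gamma$-stabilizer of any point of $\Dp$ is a finite subgroup of the compact isotropy group, hence trivial by neatness. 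Therefore $\gamma$ commutes with $\sigma_U$, which is equivalent to $\gamma U = U$, i.e., $\gamma \in \tilde\Gamma_U$.

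For the ``in particular'' assertion, combining (i) and (ii) gives injectivity of the map $\Gamma_U\backslash \Dp_U \to S_\Gamma$. Since $\Gamma_U$ acts freely on $\Dp_U$ by neatness, $Z(U)_\Gamma$ is a smooth complex manifold, and since $\Dp_U \subset \Dp$ is a holomorphic totally geodesic submanifold of codimension $r(U) d_+$ (computed Grassmannian-factor-by-factor over $j\le d_+$), the induced embedding $f\colon Z(U)_\Gamma \to S_\Gamma$ is a holomorphic immersion of the same codimension. Closedness of the image follows from proper discontinuity of the $\Gamma$-action together with closedness of $\Dp_U$ in $\Dp$, and a closed immersion between smooth complex algebraic varieties is automatically a regular embedding. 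The main difficulty is (i): finite image of $\tilde\Gamma_U$ in $O(U)$ is cheap, but the upgrade from ``finite order'' to ``identity'' uses neatness in its precise Kronecker-type incarnation, ruling out nontrivial roots of unity among eigenvalues of elements of $\Gamma$; once (i) is in hand, the Jaffee conjugation step and the regular embedding conclusion are routine.
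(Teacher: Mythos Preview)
Your proof is correct and follows essentially the same route as the paper. For (ii) your conjugation argument is identical to the paper's. For (i) the paper argues more tersely---the image of $\tilde\Gamma_U$ in $O(U)(\mathbb R)$ is discrete (it preserves the lattice $L\cap U$) and sits in a compact group, hence is finite, while neatness passes to this image and forces it to be torsion-free, hence trivial---whereas you unpack this via Kronecker's theorem on algebraic integers with all conjugates on the unit circle; these are two phrasings of the same mechanism.
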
 
\begin{proof} 
To prove (i), note that,  
since $\Gamma$ is neat, so are $\tilde \Gamma_U$ and its image in $O(U)$.  Since $U$ is totally positive definite and the image 
of $\tilde \Gamma_U$ in $O(U)(\R)$ is discrete,  this image must be torsion and hence trivial. Thus $\gamma \in \Gamma_U$, as required. 
To prove (ii), suppose that $z$ and $z'\in \Dp_U$ and that $\gamma z' = z$ for some $\gamma\in \Gamma$. 
Then, since $\s_U$ fixes $\Dp_U$ pointwise,  $\s_U\gamma \s_U z' = z$ as well. Since $\Gamma$ is torsion free and hence acts without fixed points on $\Dp$, 
we must have $\gamma^{-1} \s_U\gamma\s_U= 1$ and so $\s_U\gamma\s_U = \gamma\in \tilde\Gamma_U$, as required. 
Combining this with (i) gives the last statement. 
\end{proof}

\begin{rem}\label{rem8.3} (i) If a lattice $L$ satisfies $L = U\cap L + U^\perp\cap L$, then $\s_U \Gamma_L \s_U = \Gamma_L$. \hfb 
(ii) The condition of the lemma will always hold after passing to a subgroup of finite index. 
For example, for a totally positive subspace $U$ of $V$, let 
$$\Gamma' = \Gamma\cap \s_U\Gamma \s_U.$$
Then $\s_U\Gamma'\s_U = \Gamma'$ and  we have
$$
\xymatrix{ Z(U)_{\Gamma'}=\Gamma'_U\back \Dp_U\ar[r]^>>>>{f'}\ar@<25pt>[d]& \Gamma'\back \Dp\ar[d]\\
Z(U)_\Gamma=\Gamma_U\back \Dp_U\ar[r]^>>>>>f&\Gamma\back \Dp, 
}
$$
where $f'$ is a regular embedding. \hfb 
(iii) If $f: Z(U)_\Gamma \ra S_\Gamma$ is a regular embedding and $\Gamma'\subset \Gamma$ has finite index, then \hfb
$f': Z(U)_{\Gamma'}\ra S_{\Gamma'}$ is also a regular embedding. \hfb
\end{rem}

\subsection{Intersections}
Suppose that $U_1$ and $U_2$ are totally positive subspaces of $V$ with associated classes 
$$[Z(U_i)_\Gamma]\ \in\  \ch^{r_i d_+}(S_\Gamma), \qquad r_i = r(U_i).$$ 
We want to compute the product 
$$[Z(U_1)_\Gamma]\cdot [Z(U_2)_\Gamma]\ \in \ \ch^{(r_1+r_2)d_+}(S_\Gamma).$$
The following is the analogue of Proposition~2.2 in \cite{YZZ}. 
\begin{prop}\label{prop8.4} (i) As a set, the fiber product 
\beq\label{raw-inter-0}
\xymatrix{|I(U_1,U_2)_\Gamma| \ar[d]\ar[r]&|Z(U_2)_\Gamma|\ar[d]\\
|Z(U_1)_\Gamma|\ar[r]& |S_\Gamma|
}
\eeq 
is given by 
\beq\label{raw-inter-02}
|I(U_1,U_2)_\Gamma| = \bigcup_{W} |Z(W)_\Gamma|, 
\eeq
where $W$ runs over the set 
\beq\label{W-orbits-0}
\Gamma\back \{\ W= \gamma_1 U_1+\gamma_2 U_2\mid \gamma_1, \gamma_2\in \Gamma\ \}.
\eeq
(ii) As a scheme, the fiber product 
\beq\label{raw-inter-1}
\xymatrix{I(U_1,U_2)_\Gamma \ar[d]_g\ar[r]&Z(U_2)_\Gamma\ar[d]\\
Z(U_1)_\Gamma\ar[r]& S_\Gamma
}
\eeq 
is given by 
\beq\label{raw-inter-2}
I(U_1,U_2)_\Gamma = \bigcup_{\gammabold} Z(W_{\gammabold})_\Gamma, 
\eeq
where the subspace $W_{\gammabold}$ is given by $\gamma_1U_1+\gamma_2 U_2$ 
as the pair $\gammabold=(\gamma_1,\gamma_2)$ runs over representatives for the $\Gamma$-orbits in the set 
\beq\label{better-index} 
\text{\rm Inc}(U_1,U_2)_\Gamma:= \Gamma/\Gamma_{U_1} \times \Gamma/\Gamma_{U_2}.
\eeq
\end{prop}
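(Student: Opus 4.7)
The plan is to lift the entire fiber square to the universal cover $\Dp$, exploit the étale Galois covers $\pi_\Gamma : \Dp \to S_\Gamma$ and $\Dp_{U_i} \to Z(U_i)_\Gamma = \Gamma_{U_i}\backslash\Dp_{U_i}$ (available because $\Gamma$ is neat and hence acts freely on $\Dp$), and descend. The single geometric input I will use throughout is the scheme-theoretic identity
\[
\Dp_A\cap\Dp_B = \Dp_{A+B},
\]
valid for any subspaces $A, B \subseteq V$: in the quadric model (\ref{quadric-model}), $\Dp_W$ is cut out by the linear conditions $(z, w) = 0$ for $w \in W$, so the ideal sum for $A$ and $B$ is exactly the ideal for $A+B$; when $A + B$ fails to be totally positive, both sides are empty.

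For part (i), I would argue set-theoretically. Given $s \in |I(U_1, U_2)_\Gamma|$, choose any lift $z \in \pi_\Gamma^{-1}(s)$. The memberships $s \in |Z(U_i)_\Gamma|$ produce $\gamma_i \in \Gamma$ with $z \in \gamma_i^{-1}\Dp_{U_i} = \Dp_{\gamma_i^{-1}U_i}$, and the identity above yields $z \in \Dp_W$ for $W = \gamma_1^{-1}U_1 + \gamma_2^{-1}U_2$. Hence $s \in |Z(W)_\Gamma|$. Changing the lift $z$ to $\delta z$ (with $\delta\in\Gamma$) replaces $W$ by $\delta W$, while by Lemma~\ref{jaffee.lem}(i) the subspace $\gamma_i^{-1}U_i$ depends only on $\gamma_i\Gamma_{U_i}$; so $W$ is determined up to the $\Gamma$-orbit, giving (\ref{raw-inter-02})--(\ref{W-orbits-0}).

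For the scheme-theoretic claim (ii), I would pull the fiber square back to the universal covers, obtaining an étale $(\Gamma_{U_1}\times\Gamma_{U_2})$-Galois cover
\[
\Dp_{U_1}\times_{S_\Gamma}\Dp_{U_2} \longrightarrow I(U_1, U_2)_\Gamma.
\]
Because $\Gamma$ acts freely on $\Dp$, the fiber product $\Dp\times_{S_\Gamma}\Dp$ is the scheme-theoretic disjoint union $\bigsqcup_{\gamma\in\Gamma}\mathrm{graph}(\gamma)$, and restriction gives
\[
\Dp_{U_1}\times_{S_\Gamma}\Dp_{U_2} = \bigsqcup_{\gamma\in\Gamma}\Dp_{U_1+\gamma^{-1}U_2}
\]
via $(z, \gamma z) \mapsto z$. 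The action of $(\delta_1, \delta_2) \in \Gamma_{U_1}\times\Gamma_{U_2}$ sends the $\gamma$-summand to the $(\delta_2\gamma\delta_1^{-1})$-summand, so the components downstairs are indexed by the double coset set $\Gamma_{U_2}\backslash\Gamma/\Gamma_{U_1}$; a direct stabilizer computation gives $\{(\delta_1,\delta_2) : \delta_2 = \gamma\delta_1\gamma^{-1}\} \cong \Gamma_{U_1}\cap\gamma^{-1}\Gamma_{U_2}\gamma = \Gamma_{U_1+\gamma^{-1}U_2}$, so the descended $\gamma$-component is $\Gamma_{U_1+\gamma^{-1}U_2}\backslash\Dp_{U_1+\gamma^{-1}U_2} = Z(U_1+\gamma^{-1}U_2)_\Gamma$. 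The canonical bijection $(\gamma_1,\gamma_2) \mapsto \gamma_2^{-1}\gamma_1$ from $(\Gamma/\Gamma_{U_1}\times\Gamma/\Gamma_{U_2})/\Gamma$ to $\Gamma_{U_2}\backslash\Gamma/\Gamma_{U_1}$ then rewrites the result in terms of the incidence set (\ref{better-index}), with $W_\gammabold = \gamma_1 U_1 + \gamma_2 U_2$.

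The main technical point I anticipate is the scheme-theoretic (not merely set-theoretic) disjoint-union decomposition upstairs. The decomposition of $\Dp\times_{S_\Gamma}\Dp$ into graphs is a formal consequence of $\pi_\Gamma$ being étale Galois with free $\Gamma$-action, but the reducedness of $\Dp_{U_1+\gamma^{-1}U_2}$ as the scheme intersection $\Dp_{U_1}\cap\gamma^{-1}\Dp_{U_2}$ is the real content, and it follows precisely because the orthogonality ideals are generated by linear functionals in the ambient projective space, so their sum is exactly the linear ideal cutting out the smooth subvariety $\Dp_{U_1+\gamma^{-1}U_2}$.
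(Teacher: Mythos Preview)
The paper does not actually give a proof of this proposition; it is stated as ``the analogue of Proposition~2.2 in \cite{YZZ}'' and then used. Your argument is correct and supplies exactly the details the paper omits: lifting to the \'etale $\Gamma$-cover $\Dp\to S_\Gamma$, decomposing $\Dp\times_{S_\Gamma}\Dp$ into graphs of $\gamma\in\Gamma$, intersecting with $\Dp_{U_1}\times\Dp_{U_2}$ via the linear-ideal identity $\Dp_A\cap\Dp_B=\Dp_{A+B}$, and descending by the free $\Gamma_{U_1}\times\Gamma_{U_2}$-action. Your stabilizer computation $\Gamma_{U_1}\cap\gamma^{-1}\Gamma_{U_2}\gamma=\Gamma_{U_1+\gamma^{-1}U_2}$ is the content of Lemma~\ref{little-lem} later in the paper, and your reindexing bijection $\Gamma\backslash(\Gamma/\Gamma_{U_1}\times\Gamma/\Gamma_{U_2})\cong\Gamma_{U_2}\backslash\Gamma/\Gamma_{U_1}$ is exactly what is implicit in the Remark following the proposition.

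One small point worth making explicit: your argument in fact shows that the fiber product is a scheme-theoretic \emph{disjoint} union $\bigsqcup_{\gammabold} Z(W_{\gammabold})_\Gamma$ (indexed by double cosets), which is a bit sharper than the $\bigcup$ in the statement and is precisely what is needed for the normal-cone decomposition in Lemma~\ref{lem-cone-comp}. You should also remark that only finitely many $\gammabold$ contribute nontrivially (those for which $W_{\gammabold}$ is totally positive of dimension $\le m$), since the fiber product of the compact varieties $Z(U_i)_\Gamma$ over $S_\Gamma$ is compact; otherwise the infinite index set $\Gamma_{U_2}\backslash\Gamma/\Gamma_{U_1}$ might give a reader pause.
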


\begin{rem} (i) Note that the map 
$$\Gamma\back \big(\ \Gamma/\Gamma_{U_1} \times \Gamma/\Gamma_{U_2}\ \big) \lra \Gamma\back \{\ W= \gamma_1 U_1+\gamma_2 U_2\mid \gamma_1, \gamma_2\in \Gamma\ \}$$
has finite fibers which give rise to multiplicities in the fiber product. For example, suppose that $\dim_F U_2<\dim_F U_1$ and that $\gamma_0 U_2 \subset U_1$ and $\gamma_0' U_2\subset U_1$,  
for some $\gamma_0$ and $\gamma_0'\in \Gamma$.  Then the pairs $(1,\gamma_0)$ and $(1,\gamma_0')$ both map to the $\Gamma$-orbit of $U_1$ in (\ref{W-orbits-0}). 
But the subspaces  $\gamma_0 U_2$ and $\gamma'_0 U_2$ of $U_1$ can be distinct and hence, since elements of $\Gamma_{U_1}$ act trivially on $U_1$,  the double cosets
$\Gamma_{U_1}\gamma_0\Gamma_{U_2}$ and $\Gamma_{U_1}\gamma_0'\Gamma_{U_2}$ can be distinct as well. \hfb 
(ii)  We will often use representatives of the form $(1,\gammabold)$ for orbits in (\ref{better-index}), with a slight abuse of notation. 
\end{rem}

By Proposition~6.1 (a) of \cite{fulton.book} we have the following. 
\begin{prop}\label{prop-normal-cones}
Suppose that $i_1: Z(U_1)_\Gamma \ra S$ is a regular embedding. Let $N = g^*(N_{Z(U_1)_\Gamma}(S_\Gamma))$ be the pullback of the normal bundle, 
where $g$ is as in (\ref{raw-inter-1}), and let $I = I(U_1,U_2)_\Gamma$.  \hfb 
(i) Then 
$$Z(U_1)_\Gamma\cdot Z(U_2)_\Gamma = \{\, c(N) \cap s(I, Z(U_2)_\Gamma)\,\}_{\kappa},$$
where 
$$\kappa = \dim Z(U_1)+\dim Z(U_2)- \dim S,$$
$c(N)$ is the total Chern class of $N$, and 
$$s(I, Z(U_2)_\Gamma) = s(C)$$
is the Segre class of the normal cone
$$C= C_{I} (Z(U_2)_\Gamma)$$
of $I$ in $Z(U_2)_\Gamma$. \hfb 
\end{prop}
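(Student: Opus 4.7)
The statement is essentially a direct specialization of Fulton's refined intersection formula to our geometric setting, so the plan is to verify the hypotheses of \cite[Prop.~6.1(a)]{fulton.book} and read off the conclusion.

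First I would set up the fiber-square diagram (\ref{raw-inter-1}), noting that $Z(U_1)_\Gamma$ and $Z(U_2)_\Gamma$ are both closed subvarieties of $S_\Gamma$, and that $I = I(U_1,U_2)_\Gamma$ is by definition the scheme-theoretic intersection (equivalently the fiber product). Next I would verify that we are in the situation where Fulton's formula applies: by hypothesis $i_1$ is a regular embedding, so it has a locally free normal bundle $N_{Z(U_1)_\Gamma}(S_\Gamma)$ of rank equal to the codimension $r_1 d_+$, and the right-hand map $Z(U_2)_\Gamma \to S_\Gamma$ is a morphism of schemes (an immersion of a subvariety). This is exactly the input required by Fulton's construction of the refined Gysin pullback $i_1^!$.

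Having confirmed the setup, I would invoke \cite[Prop.~6.1(a)]{fulton.book} verbatim: the refined intersection $i_1^{!}[Z(U_2)_\Gamma]$ equals the class
$$\{\, c(N)\cap s(I,Z(U_2)_\Gamma)\,\}_{\kappa},$$
where $N = g^*N_{Z(U_1)_\Gamma}(S_\Gamma)$, and $s(I,Z(U_2)_\Gamma) = s(C_{I}Z(U_2)_\Gamma)$ is the Segre class of the normal cone of $I$ in $Z(U_2)_\Gamma$. The subscript $\kappa$ denotes extracting the component of the appropriate dimension. Since $S_\Gamma$ is smooth and $Z(U_2)_\Gamma$ is a cycle on $S_\Gamma$, the refined intersection $i_1^![Z(U_2)_\Gamma]$ coincides with the product $[Z(U_1)_\Gamma]\cdot[Z(U_2)_\Gamma]$ in $\ch^\bullet(S_\Gamma)$, pushed forward from $I$ via the closed immersion into $S_\Gamma$; this identification is the content of \cite[\S 8.1]{fulton.book} for intersection in a smooth ambient variety.

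Finally I would check the dimension count: $Z(U_1)_\Gamma$ has codimension $r_1 d_+$ in $S_\Gamma$, so $i_1^{!}[Z(U_2)_\Gamma]$ lies in
$$A_{\dim Z(U_2)_\Gamma - r_1 d_+}(I) = A_{\dim Z(U_1)_\Gamma + \dim Z(U_2)_\Gamma - \dim S_\Gamma}(I),$$
giving $\kappa = \dim Z(U_1)+\dim Z(U_2) - \dim S$ as stated. No step here should be a genuine obstacle: the only substantive input is the regular-embedding hypothesis on $i_1$ (which is precisely what allows the normal bundle, as opposed to merely a normal cone, on the $Z(U_1)$-side), and this is granted; the rest is bookkeeping. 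The real work, reserved for the subsequent propositions, will be to compute $s(I,Z(U_2)_\Gamma)$ component-by-component along the decomposition (\ref{raw-inter-2}) and to identify the excess bundle contributions in terms of the co-tautological bundle $\boldCC_S$.
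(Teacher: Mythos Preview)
Your proposal is correct and matches the paper's approach exactly: the paper simply introduces the proposition with the sentence ``By Proposition~6.1~(a) of \cite{fulton.book} we have the following'' and gives no further proof, so your elaboration of the hypotheses, the identification of the refined Gysin class with the Chow-ring product on the smooth ambient $S_\Gamma$, and the dimension count are precisely the bookkeeping that justifies that citation.
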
 
We will see below that this expression can be written as a sum of contributions from the various $Z(W_{\gammabold})_\Gamma$. 
To evaluate these contributions we will use the next result, whose proof we omit, compare 
Section~6.1 of Fulton \cite{fulton.book}, in particular, Example 6.1.7.
\begin{prop}\label{inter-prop-1}  For totally positive subspaces $U_1$ and $U_2$ of $V$ and $W=U_1+U_2$, suppose that 
all of the morphisms in the diagram\footnote{This need not be the fiber product.}
\beq\label{square-good}
\xymatrix{Z(W)_{\Gamma}\ar[r]\ar[d]_q&Z(U_2)_{\Gamma}\ar[d]\\
Z(U_1)_{\Gamma}\ar[r]&S_{\Gamma}
}
\eeq
are regular embeddings. 
Let 
$$N = q^*N_{Z(U_1)_{\Gamma}} S_{\Gamma}$$ 
be the pullback to $Z(W)_\Gamma$ of the normal bundle to $Z(U_1)_{\Gamma}$ in $S_{\Gamma}$ and let 
$$N' = N_{Z(W)_{\Gamma}}Z(U_2)_{\Gamma}$$ 
be the normal bundle to $Z(W)_{\Gamma}$  in $Z(U_2)_{\Gamma}$. 
Note that $N'$ is a sub-bundle of $N$ and that these bundles have ranks $(r(W)-r(U_2))d_+$ and  $r(U_1)d_+$ respectively. The 
excess bundle $E = N/N'$ has rank 
$$e=(\,r(U_1)+r(U_2)-r(W)\,)d_+.$$ 
Then 
$$
\{\, c(N) \cap s(Z(W)_\Gamma, Z(U_2)_\Gamma)\,\}_{\kappa}
=c_e(E)\cap [Z(W)_\Gamma]\ \  \in  A_{k-e}(Z(W)_\Gamma),
$$
where $c_e(E)$ is the top Chern class of $E$ and $k = \dim \Dp_W = (m-r(W))d_+$. Pushing this forward to $S_\Gamma$ yields a class 
\beq\label{fund-contrib}
c_e(E)\cap [Z(W)_\Gamma]\  \in  A_{k-e}(S_\Gamma) = \ch^{(r_1+r_2)d_+}(S_\Gamma).
\eeq
\end{prop}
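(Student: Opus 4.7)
The plan is to recognize this as a direct instance of the excess intersection formula (Example~6.1.7 of \cite{fulton.book}) once we identify the excess bundle. Since $Z(W)_\Gamma\hookrightarrow Z(U_2)_\Gamma$ is assumed to be a regular embedding, the normal cone of $Z(W)_\Gamma$ in $Z(U_2)_\Gamma$ coincides with the normal bundle $N'$, of rank $(r(W)-r(U_2))d_+$. Hence its Segre class simplifies to
$$
s(Z(W)_\Gamma,Z(U_2)_\Gamma) = c(N')^{-1}\cap [Z(W)_\Gamma],
$$
and the problem reduces to showing that $c(N)\cdot c(N')^{-1}$, truncated in the appropriate codimension, equals $c_e(E)$.

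The key step is to exhibit $N'$ as a sub-bundle of $N$ with quotient $E$. The subvarieties $\Dp_W\subset \Dp_{U_i}\subset \Dp$ are totally geodesic complex submanifolds, and a direct local computation on the quadric model (\ref{quadric-model}) shows that at every point $z\in \Dp_W$ the tangent spaces fit together as
$$
T_z\Dp_W = T_z\Dp_{U_1}\cap T_z\Dp_{U_2}\subset T_z\Dp,
$$
giving rise to a natural injection of the normal bundle of $\Dp_W\hookrightarrow \Dp_{U_2}$ into the restriction to $\Dp_W$ of the normal bundle of $\Dp_{U_1}\hookrightarrow \Dp$. Since the embeddings in diagram (\ref{square-good}) are assumed regular, these bundles descend to $Z(W)_\Gamma$ and yield a short exact sequence
$$
0\to N'\to N\to E\to 0
$$
of vector bundles on $Z(W)_\Gamma$, with $E$ of rank $e=(r(U_1)+r(U_2)-r(W))d_+$.

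With this sequence in place, the Whitney sum formula gives $c(N) = c(N')\cdot c(E)$, and therefore
$$
c(N)\cap s(Z(W)_\Gamma,Z(U_2)_\Gamma) = c(E)\cap [Z(W)_\Gamma].
$$
Taking the component of dimension $\kappa = k-e$ picks out precisely $c_e(E)\cap [Z(W)_\Gamma]$, since $Z(W)_\Gamma$ is smooth of pure dimension $k$ and only the top Chern class contributes in that degree. Pushing forward along the proper map $Z(W)_\Gamma\to S_\Gamma$ then yields the desired class in $\ch^{(r_1+r_2)d_+}(S_\Gamma)$. The main obstacle is the construction of the inclusion $N'\hookrightarrow N$, i.e., the verification that the intersection of the tangent spaces to $\Dp_{U_1}$ and $\Dp_{U_2}$ along $\Dp_W$ is exactly $T\Dp_W$; once this local transversality is in hand, everything else is a formal consequence of the Fulton formalism.
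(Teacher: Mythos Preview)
Your proposal is correct and follows exactly the approach indicated in the paper: the proof of this proposition is omitted there with a reference to Example~6.1.7 of \cite{fulton.book}, and you have simply filled in the standard excess-intersection argument that reference provides. The inclusion $N'\hookrightarrow N$ and the identification of the excess bundle that you sketch via tangent spaces is carried out in the paper immediately afterward (see (\ref{normal-2}) and (\ref{normal-3})) using the description $T_z(\Dp)=\Hom_F(z,z^\perp)$, so your local transversality claim is verified there as well.
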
 

\subsection{Excess bundles} 
In this section  we compute the excess bundle in the situation of Proposition~\ref{inter-prop-1}, working with complex manifolds. 

For a point $z_j\in \Djp$, we have a canonical identification of the tangent space\footnote{
Here we note that if $e_1$ and $e_2$ is a properly oriented orthogonal basis for $z_j$ with $(e_1,e_1)=(e_2,e_2) = -1$, then 
the complex structure $J_{z_j}$ on $z_j$ given by $J_{z_j} e_1 = -e_2$, $J_{z_j} e_2 = e_1$ induces a complex structure on $T_{z_j}(\Djp)$; it depends only on the orientation of $z_j$. 
The map (\ref{quadric-model}) sending $z_j$ to the $+i$-eigenspace of $J_{z_j}$ in $(z_j)_\C$ is holomorphic. }
$$T_{z_j}(\Djp) = \Hom(z_j,z_j^\perp),$$
and hence 
$$T_z(\Dp) = \bigoplus_{j=1}^{d_+} \Hom(z_j,z_j^\perp) = \Hom_F(z,z^\perp).$$
Here we are working with subspaces of $V\tt_\Q \R$, and the idempotents in $F\tt_\Q\R$ give the direct sum decomposition of the space of $F$-linear maps. 
As a slight abuse of notation, we are writing $\Hom_F$ for the space of $F_\R$-linear maps.  
Similarly, if $z\in \Dp_W$, we have tangent spaces
\begin{align*}
T_z(\Dp_{U_i}) &= \Hom_F(z,z^\perp\cap U_i^\perp), \quad i=1, 2, \\
\noalign{and}
T_z(\Dp_W) &= \Hom_F(z,z^\perp\cap W^\perp).
\end{align*}
Since $z\in \Dp_{U_1}$, we have an orthogonal decomposition
$$V_\R = U_{1,\R} + z+ (z^\perp \cap U_{1,\R}^\perp).$$
Thus, the fiber at $z$ of the normal bundle to $\Dp_{U_1}$ in $\Dp$ is given by 
\beq\label{normal-bund-1}
N_{\Dp_{U_1}}(\Dp)_z \simeq \Hom_F(z,z^\perp/(z^\perp \cap U_{1}^\perp)) \simeq \Hom_F(z,U_{1,\R}).
\eeq

From the construction of the line bundles $\mathcal L_j$ and the vector bundle $\boldCC_\Gamma$ of Section~\ref{section2}, we see that 
the vector space $(V_j)_\C$ inherits an $F$-vector space structure from $V$, and hence the fibers of each $\mathcal L_j$ and 
of $\boldCC_\Gamma$ are naturally $F$-vector spaces. Thus (\ref{normal-bund-1}) yields the following result. 
\begin{lem}\label{lem-normal-bund} Suppose that the map $Z(U_1)_\Gamma\ra S_\Gamma$ is a regular embedding. 
Then 
$$N_{Z(U_1)_\Gamma}(S_\Gamma) \simeq (\boldCC_\Gamma \tt_F U_1)\vert_{Z(U_1)_\Gamma}.$$
\end{lem}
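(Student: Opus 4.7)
The plan is to take the fiberwise identification \eqref{normal-bund-1} as the starting point and match it, summand by summand under the $F_{\R} = \prod_j \R$ decomposition, with the fibers of the vector bundle $\boldCC_\Gamma \tt_F U_1$. The only real content is to verify that the complex structure implicit in the normal bundle agrees with the complex structure of $\mathcal{L}_j^\vee$ coming from the quadric model.

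First, using $F \tt_{\Q} \R = \prod_j \R$ (and that both sides of \eqref{normal-bund-1} are $F_\R$-modules) I would write
$$N_{\Dp_{U_1}}(\Dp)_z \simeq \Hom_F(z, U_{1,\R}) = \bigoplus_{j=1}^{d_+} \Hom_\R(z_j, U_{1,j}),$$
where $U_{1,j} = U_1 \tt_{F,\sigma_j} \R$. The complex structure on the $j$th summand, inherited from the holomorphic tangent bundle of $\Dp$, is precomposition with $J_{z_j}$; equivalently,
$$\Hom_\R(z_j, U_{1,j}) \simeq \Hom_\C\!\bigl((z_j, J_{z_j}),\, (U_{1,j})_\C\bigr).$$
Via the quadric model \eqref{quadric-model}, projection to the $+i$-eigenspace of $J_{z_j}$ gives a $\C$-linear isomorphism $(z_j,J_{z_j}) \isoarrow \ell_j$, where $\ell_j = \mathcal{L}_{j,z}^{\nat}$. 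Hence
$$\Hom_\C(\ell_j,(U_{1,j})_\C) \simeq \ell_j^\vee \tt_\C (U_{1,j})_\C \simeq (\mathcal{L}_j^\vee)_z \tt_\R U_{1,j},$$
using the standard identification $A \tt_\C (B \tt_\R \C) \simeq A \tt_\R B$ for $A$ complex and $B$ real.

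Next, I would spell out the right-hand side of the claim. The $F$-action on the fibers of $\boldCC_\Gamma = \bigoplus_j \mathcal{L}_j^\vee$ arising from the $F$-structure on $V$ (extended to $V_\C = \prod_j (V_j)_\C$) decomposes $\boldCC_\Gamma$ into $\sigma_j$-isotypic pieces, the $j$th of which is precisely $\mathcal{L}_j^\vee$. Therefore
$$(\boldCC_\Gamma \tt_F U_1)_z = \bigoplus_{j=1}^{d_+} (\mathcal{L}_j^\vee)_z \tt_{F,\sigma_j} U_1 \simeq \bigoplus_{j=1}^{d_+} (\mathcal{L}_j^\vee)_z \tt_\R U_{1,j},$$
which agrees with the previous display.

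Finally, all of the identifications above are natural in $z$ and equivariant for the action of $\Gamma$ on $\Dp_{U_1}$; since $f : Z(U_1)_\Gamma \to S_\Gamma$ is assumed to be a regular embedding, they assemble to an isomorphism of holomorphic vector bundles on $Z(U_1)_\Gamma$, which by GAGA is an isomorphism of algebraic vector bundles. The only place that requires genuine care is reconciling the complex structure on the tangent space of $\Dp$ (given via $J_{z_j}$) with that on the tautological line $\mathcal{L}_j$ (given as a subspace of $(V_j)_\C$); the projection $v \mapsto v - i J_{z_j} v$ onto the $+i$-eigenspace makes this explicit and is the one real computation, but it is a standard fact about negative two-planes in a space of signature $(m,2)$ and presents no serious obstacle.
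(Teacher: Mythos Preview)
Your proof is correct and follows the same route as the paper: both derive the lemma directly from the fiberwise identification \eqref{normal-bund-1}. The paper's argument is in fact a single sentence—``Thus \eqref{normal-bund-1} yields the following result''—together with the preceding remark that the fibers of $\mathcal L_j$ and $\boldCC_\Gamma$ inherit an $F$-structure from $V$; your write-up simply supplies the details (the $F_\R$-decomposition, the matching of complex structures via the $+i$-eigenspace projection, and the $\Gamma$-equivariance) that the paper leaves implicit.
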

Next consider $z\in \Dp_W\subset \Dp_{U_2}$, so that the fiber of the normal bundle to $\Dp_W = \Dp_{U_1+U_2}$ in $\Dp_{U_2}$ at $z$ is 
\beq\label{normal-2}
\Hom_F(z, (z^\perp\cap U_2^\perp)/(z^\perp\cap W^\perp)) = \Hom_F(z, (z^\perp\cap U_2^\perp)/(z^\perp\cap U_1^\perp\cap U_2^\perp)),
\eeq
where $W^\perp = U_1^\perp\cap U_2^\perp$. We have the diagram
\beq\label{normal-3}
\xymatrix{z^\perp\cap U_2^\perp \ar[r]\ar[d]& z^\perp\ar[d]\\
(z^\perp\cap U_2^\perp)/(z^\perp\cap W^\perp)\ar[r]^{\qquad j} &z^\perp/(z^\perp\cap U_1^\perp)
}
\eeq
where $j$ is injective. This exhibits $N_{\Dp_W}(\Dp_{U_2})$ as a sub-bundle of $N_{\Dp_{U_1}}(\Dp)$, since 
$$N_{\Dp_W}(\Dp_{U_2})_z = \Hom_F(z,(z^\perp\cap U_2^\perp)/(z^\perp\cap W^\perp))\hookrightarrow \Hom_F(z,z^\perp/(z^\perp\cap U_1^\perp)) = N_{\Dp_{U_1}}(\Dp)_z.$$
The fiber of the excess bundle at $z$ is given by $F$-linear homomorphisms from $z$ into the cokernel of $j$, 
$$E_z = \Hom_F(z, z^\perp/(z^\perp\cap U_1^\perp + z^\perp\cap U_2^\perp)).$$ 
But, in fact, we have a nicer expression. 
\begin{lem}
$$E_z = \Hom_{F}(z, (U_1\cap U_2)_\R).$$
\end{lem}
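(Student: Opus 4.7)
The plan is to show that the natural map from $(U_1\cap U_2)_\R$ into the quotient $z^\perp/(z^\perp\cap U_1^\perp + z^\perp\cap U_2^\perp)$ is an $F$-linear isomorphism; applying $\Hom_F(z,-)$ then gives the claim. Throughout, write $U_0 = U_1\cap U_2$, $W = U_1+U_2$, $A = z^\perp\cap U_1^\perp$ and $B = z^\perp\cap U_2^\perp$, so that $A\cap B = z^\perp \cap W^\perp$ and $E_z = \Hom_F(z, z^\perp/(A+B))$.

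First, I would extract the clean orthogonal decompositions available because the subspaces involved are totally positive definite. Since $U_1$ is non-degenerate, $V_\R = U_{1,\R}\oplus U_{1,\R}^\perp$; since $z\in \Dp_W\subset \Dp_{U_1}$, we have $z\subset U_{1,\R}^\perp$, which gives $U_{1,\R}\subset z^\perp$ and hence
\[
z^\perp = U_{1,\R}\oplus (z^\perp\cap U_{1,\R}^\perp) = U_{1,\R}\oplus A.
\]
The analogous decompositions hold with $U_2$, and with $W$ (which is again totally positive definite, being a sum of such).

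Second, I would do the dimension count. From $z\subset W^\perp$ and the decomposition $V_\R = W_\R \oplus W_\R^\perp$, one gets $\dim z^\perp = \dim V - \dim z$, $\dim A = \dim V - \dim z - \dim U_1$, $\dim B = \dim V - \dim z - \dim U_2$, and $\dim(A\cap B) = \dim V - \dim z - \dim W$. Combined with $\dim W = \dim U_1 + \dim U_2 - \dim U_0$, this yields
\[
\dim\bigl(z^\perp/(A+B)\bigr) = \dim z^\perp - \dim A - \dim B + \dim(A\cap B) = \dim U_0.
\]
(Here ranks are taken over the appropriate factor of $F_\R$ and the computation is done componentwise over the archimedean places in $\Sigma_+$.)

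Third, since $U_0\subset U_1\subset z^\perp$, composing the inclusion with the quotient defines an $F$-linear map $\alpha\colon U_{0,\R}\to z^\perp/(A+B)$. To prove $\alpha$ is an isomorphism, it suffices by the dimension count to show injectivity: if $w\in U_{0,\R}$ and $w = a+b$ with $a\in A$, $b\in B$, then for any $w'\in U_{0,\R}\subset U_1\cap U_2$ we have $(a,w')=0$ (since $a\in U_1^\perp$) and $(b,w')=0$ (since $b\in U_2^\perp$), hence $(w,w')=0$. Taking $w'=w$ and using that $U_{0,\R}\subset U_{1,\R}$ is positive definite at each archimedean place in $\Sigma_+$ forces $w=0$.

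Finally, applying the exact functor $\Hom_F(z,-)$ to the $F$-linear isomorphism $\alpha$ yields $E_z = \Hom_F(z,(U_1\cap U_2)_\R)$, as required. The whole argument is essentially linear algebra; the only substantive input beyond dimension counting is the positive definiteness of $U_0$, used precisely to rule out nonzero isotropic elements in the injectivity step. Compatibility with the $F_\R$-structure is automatic because $U_0$, $U_1$, $U_2$ are $F$-rational subspaces of $V$ and the pairing is $F$-bilinear.
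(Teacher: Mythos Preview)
Your proof is correct and follows essentially the same route as the paper: show the inclusion $(U_1\cap U_2)_\R\hookrightarrow z^\perp$ induces an isomorphism onto $z^\perp/(z^\perp\cap U_1^\perp + z^\perp\cap U_2^\perp)$ by combining an injectivity argument based on positive definiteness with a dimension count, then apply $\Hom_F(z,-)$. The only cosmetic differences are that you make the inclusion--exclusion dimension count explicit (the paper quotes the rank $e$ of the excess bundle from the preceding proposition), and you phrase injectivity via pairing with an arbitrary $w'\in U_{0,\R}$ before specializing to $w'=w$, whereas the paper pairs $x$ with itself directly.
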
 
\begin{proof}  Recall that $\Hom_F$ is the space of $F_\R$-linear maps. Since $z\in D_W = D_{U_1}\cap D_{U_2}$, we have an inclusion $(U_1\cap U_2)_\R \ra z^\perp$. 
If 
$$x\in (U_1\cap U_2)_\R \cap (z^\perp\cap U_1^\perp + z^\perp\cap U_2^\perp),$$
write $x = w_1+w_2$ with $w_i\in U_i^\perp\cap z^\perp$. Then $(x,x) = (x,w_1)+(x,w_2)=0,$
so that $x=0$. But 
$$\dim_F U_1\cap U_2 = e = \dim_\R z^\perp/(z^\perp\cap U_1^\perp + z^\perp\cap U_2^\perp),$$
so the inclusion gives an isomorphism
$$(U_1\cap U_2)_\R \isoarrow z^\perp/(z^\perp\cap U_1^\perp + z^\perp\cap U_2^\perp).$$
\end{proof}
Thus we have the following nice expression for the excess bundle. 
\begin{prop}\label{prop-excess-bundle}  In the situation of Proposition~\ref{inter-prop-1}, the excess bundle is given by 
$$E \simeq \big(\,\boldCC_\Gamma \tt_F (U_1\cap U_2)\,\big) \vert_{Z(W)_\Gamma}.$$
\end{prop}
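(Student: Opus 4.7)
The plan is to globalize the fiber-wise identification of the previous lemma to an isomorphism of holomorphic vector bundles on $Z(W)_\Gamma$. Since all the constructions are $\Gamma_W$-equivariant and descend along $\pi_\Gamma\colon\Dp_W\to Z(W)_\Gamma$, I would work upstairs on $\Dp_W$ and then descend.

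First, on $\Dp_W$ the constant inclusion $U_1\cap U_2\hookrightarrow V$ gives a constant sub-bundle of the trivial $V_\R$-bundle which, because $U_1,U_2\subset W\subset z^\perp$ on $\Dp_W$, lies inside the holomorphic sub-bundle $z\mapsto z^\perp$. The two sub-bundles $z\mapsto z^\perp\cap U_i^\perp$ have constant ranks on $\Dp_W$, so the quotient with fibers $z^\perp/(z^\perp\cap U_1^\perp+z^\perp\cap U_2^\perp)$ is a holomorphic vector bundle, and the composition
$$ (U_1\cap U_2)_\R\times \Dp_W\ \lra\ z^\perp\ \lra\ z^\perp/(z^\perp\cap U_1^\perp+z^\perp\cap U_2^\perp) $$
is a holomorphic vector bundle map that is an isomorphism on each fiber by the previous lemma; hence it is an isomorphism of vector bundles.

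Second, taking $F_\R$-linear $\Hom$ out of the tautological sub-bundle $z\mapsto z$ on $\Dp_W$ turns the cokernel of $j$ in diagram (\ref{normal-3})---whose sections give $\pi_\Gamma^* E$---into the bundle $\Hom_F(\text{taut},(U_1\cap U_2)_\R)$. This yields a $\Gamma_W$-equivariant isomorphism
$$ \pi_\Gamma^* E\ \simeq\ \Hom_F(\text{taut},(U_1\cap U_2)_\R). $$
By Jaffee Lemma~(i), $\Gamma_W$ acts trivially on $W$, hence on the constant factor $U_1\cap U_2$, so the right-hand side descends to a holomorphic bundle on $Z(W)_\Gamma$. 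Finally, applying the same natural identification underlying Lemma~\ref{lem-normal-bund}, namely the $\Gamma$-equivariant isomorphism $\Hom_F(\text{taut},U_\R)\simeq\pi_\Gamma^*(\boldCC_\Gamma\tt_F U)$ valid for any totally positive $F$-subspace $U\subset V$, with $U=U_1\cap U_2$, rewrites this as $(\boldCC_\Gamma\tt_F(U_1\cap U_2))|_{Z(W)_\Gamma}$, proving the claim.

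The main point requiring care is the last step: one must verify that the identification $\Hom_F(\text{taut},F_\R)\cong\boldCC_\Gamma$ used (implicitly) in Lemma~\ref{lem-normal-bund} extends $F$-linearly in the target to general $U$. This is bookkeeping about the compatibility of the $F$-structure on $\boldCC_\Gamma$ with the complex structure on $D^{(j)}$ via the quadric model (\ref{quadric-model}), but it requires no new ingredient beyond what is already tacitly invoked in Lemma~\ref{lem-normal-bund}.
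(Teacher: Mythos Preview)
Your proposal is correct and is essentially the same approach as the paper's, which simply states the proposition as an immediate consequence (``Thus we have the following nice expression\ldots'') of the preceding fiber-wise lemma together with the identification already used for Lemma~\ref{lem-normal-bund}. You have carefully spelled out the globalization, $\Gamma_W$-equivariance, and descent that the paper leaves implicit.
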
 

\begin{cor}\label{cor-great} In the situation of Proposition~\ref{inter-prop-1}, 
$$c_e(E)\cap [Z(W)_\Gamma]= \cbold_\Gamma^{r_1+r_2-r(W)}\cap [Z(W)_\Gamma]\  \in  \ch^{(r_1+r_2)d_+}(S_\Gamma).$$
\end{cor}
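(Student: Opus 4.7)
The plan is to apply Proposition~\ref{prop-excess-bundle}, which identifies the excess bundle as $E \simeq \big(\boldCC_\Gamma \tt_F (U_1\cap U_2)\big)\vert_{Z(W)_\Gamma}$, and then compute the top Chern class of this rank-$e$ bundle by exploiting the splitting $\boldCC_\Gamma = \bigoplus_{j=1}^{d_+} \mathcal L_j^\vee$ together with the $F$-module structure on each $\mathcal L_j$.

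First I would note that each line bundle $\mathcal L_j$ carries an $F$-action through the embedding $\sigma_j$, inherited from the natural $F$-structure on $(V_j)_\C$. Consequently the $F$-tensor product splits as a direct sum indexed by the archimedean places:
$$\boldCC_\Gamma \tt_F (U_1\cap U_2) \simeq \bigoplus_{j=1}^{d_+} \mathcal L_j^\vee \tt_{F,\sigma_j} (U_1\cap U_2) \simeq \bigoplus_{j=1}^{d_+} \mathcal L_j^\vee \tt_\C \big((U_1\cap U_2) \tt_{F,\sigma_j} \C\big).$$
Each summand on the right is a complex vector bundle of rank $r_1+r_2-r(W)$ which, after trivializing the second factor by a $\C$-basis, is (non-canonically) a direct sum of that many copies of $\mathcal L_j^\vee$.

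Next I would apply the Whitney sum formula summand by summand: the total Chern class of the $j$-th summand is $(1 + c_1(\mathcal L_j^\vee))^{r_1+r_2-r(W)}$, so its top Chern class is $c_1(\mathcal L_j^\vee)^{r_1+r_2-r(W)}$. Applying Whitney once more to the direct sum over $j$, and using commutativity of the Chow ring of the smooth variety $S_\Gamma$, I obtain
$$c_e(E) = \prod_{j=1}^{d_+} c_1(\mathcal L_j^\vee)^{r_1+r_2-r(W)} = \bigg(\prod_{j=1}^{d_+} c_1(\mathcal L_j^\vee)\bigg)^{r_1+r_2-r(W)} = \cbold_\Gamma^{r_1+r_2-r(W)},$$
by the definition of $\cbold_\Gamma$ in (\ref{def-cS}). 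Capping with $[Z(W)_\Gamma]$, and (if desired) invoking the projection formula along the regular embedding $Z(W)_\Gamma \hookrightarrow S_\Gamma$, yields the stated identity in $\ch^{(r_1+r_2)d_+}(S_\Gamma)$.

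There is no genuine obstacle here beyond Proposition~\ref{prop-excess-bundle} itself: once the excess bundle has been recognized as an $F$-tensor product, the archimedean decomposition is automatic, and the remainder of the argument is a formal manipulation of Chern classes. The only point that deserves a brief check is that the isomorphism $\mathcal L_j^\vee \tt_{F,\sigma_j}(U_1\cap U_2) \simeq (\mathcal L_j^\vee)^{\oplus(r_1+r_2-r(W))}$, though not canonical, suffices for computing Chern classes, since the total Chern class of a trivial-factor tensor product depends only on the rank of the trivial factor.
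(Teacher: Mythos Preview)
Your argument is correct and is precisely what the paper has in mind: the corollary is stated immediately after Proposition~\ref{prop-excess-bundle} with no proof, because the paper regards the passage from $E\simeq(\boldCC_\Gamma\otimes_F(U_1\cap U_2))|_{Z(W)_\Gamma}$ to $c_e(E)=\cbold_\Gamma^{\,r_1+r_2-r(W)}$ as immediate from the splitting $\boldCC_\Gamma=\bigoplus_j\mathcal L_j^\vee$ and Whitney. You have simply spelled out that implicit step.
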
 

\subsection{Passing to covers}

To compute $Z(U_1)_\Gamma\cdot Z(U_2)_\Gamma$, we pass to a cover where the geometry becomes nice so that  Corollary~\ref{cor-great} 
can be applied. 

If $\Gamma'$ has finite index in $\Gamma$ and $\pr_{\Gamma'}:S_{\Gamma'} \ra S_\Gamma$ is the projection, then 
\begin{align*}
(\pr_{\Gamma'})_*([Z(U)_{\Gamma'}]) &= [Z(U)_\Gamma], \\
\noalign{and}
\pr_{\Gamma'}^*([Z(U)_\Gamma]) &=  \sum_{\gamma\in \Gamma'\back \Gamma/\Gamma_U} [Z(\gamma U)]_{\Gamma'}.\\
\noalign{Moreover, }
\pr_{\Gamma'}^*(I(U_1,U_2)) &= \bigcup_{\gammabold} Z(W_{\gammabold})_{\Gamma'},
\end{align*}
where $W_{\gammabold} = \gamma_1 U_1+\gamma_2 U_2$ as $\gammabold=(\gamma_1,\gamma_2)$ runs over a set of orbit representatives for
\beq\label{index-set}
\Gamma'\back \big(\ \Gamma/\Gamma_{U_1}\times \Gamma/\Gamma_{U_2}\ \big).
\eeq
Here note that we are simply passing from the $\Gamma$-orbits in Proposition~\ref{prop8.4} to $\Gamma'$-orbits.            

\begin{prop}\label{prop-good-cover} For totally positive subspaces $U_1$ and $U_2$ of $V$, choose a set of representatives 
$\{ \gammabold_j\}$    
for the $\Gamma$-orbits in (\ref{better-index}) and let $W_j = W_{\gammabold_j}$. Then there exists a subgroup $\Gamma'$ of finite index in $\Gamma$ such that, 
for all $\gamma\in \Gamma$, 
all of the morphisms 
$$Z(\gamma U_1)_{\Gamma'}\lra S_{\Gamma'}, \quad Z(\gamma U_2)_{\Gamma'}\lra S_{\Gamma'}, \quad\text{and}\quad\ Z(\gamma W_{j})_{\Gamma'} \lra S_{\Gamma'},$$
are injective and hence are regular embeddings. 
Moreover, for $W_{j}= \gamma_{1,j}U_1+\gamma_{2,j}U_2$ and for all $\gamma\in \Gamma$, the morphism 
$$Z(\gamma W_{j})_{\Gamma'} \lra Z(\gamma \gamma_{2,j}U_2)_{\Gamma'}$$
is injective and hence is a regular embedding as well. 
\end{prop}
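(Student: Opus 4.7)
Plan. The strategy is to apply the Jaffee Lemma (Lemma~\ref{jaffee.lem}) uniformly, which requires finding a finite-index $\Gamma' \subset \Gamma$ with $\sigma_{\gamma U}\,\Gamma'\,\sigma_{\gamma U} = \Gamma'$ for every $\gamma \in \Gamma$ and every $U$ in the finite collection $\mathcal{C} := \{U_1, U_2, W_1, \ldots, W_k\}$. The first step is to reduce this \emph{a priori} infinite family of conditions to a finite one by insisting that $\Gamma'$ be \emph{normal} in $\Gamma$: since $\sigma_{\gamma U} = \gamma\sigma_U\gamma^{-1}$, normality yields
\beq\nonumber
\sigma_{\gamma U}\,\Gamma'\,\sigma_{\gamma U} \;=\; \gamma(\sigma_U\Gamma'\sigma_U)\gamma^{-1} \;=\; \gamma\Gamma'\gamma^{-1} \;=\; \Gamma',
\eeq
once $\sigma_U\Gamma'\sigma_U = \Gamma'$ is known for the finite list $U \in \mathcal{C}$.

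The second step is to construct such a normal, finite-index subgroup $\Gamma' \triangleleft \Gamma$. Each $\sigma_U$ lies in $O(V)(F)$ and hence commensurates $\Gamma_L$; Remark~\ref{rem8.3}(ii) produces, for a single $U$, a finite-index subgroup invariant under conjugation by $\sigma_U$. To handle the full finite family simultaneously, one passes to a sublattice $L_0 \subset L$ of finite index adapted to the orthogonal splittings $V = U \oplus U^\perp$ for every $U \in \mathcal{C}$, so that each $\sigma_U$ preserves $L_0$ (and hence $\Gamma_{L_0}$ is invariant under conjugation by each such $\sigma_U$, by Remark~\ref{rem8.3}(i)); one then takes a principal congruence subgroup of $\Gamma_{L_0}$ contained in $\Gamma$ and replaces it by its normal core in $\Gamma$ to obtain the desired $\Gamma'$.

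Given such a $\Gamma'$, Lemma~\ref{jaffee.lem}(ii) applied to each translate $\gamma U$ (for $U \in \mathcal{C}$ and $\gamma \in \Gamma$) gives injectivity of $\tilde\Gamma'_{\gamma U}\backslash\Dp_{\gamma U} \to \Gamma'\backslash\Dp$; part~(i) (using that $\Gamma'$, being a subgroup of $\Gamma$, is still neat) identifies this with $Z(\gamma U)_{\Gamma'} \to S_{\Gamma'}$, and the inclusion of totally geodesic complex submanifolds is automatically a regular embedding. For the final assertion, observe that $\gamma\gamma_{2,j}U_2 \subset \gamma W_j$, so $\sigma_{\gamma W_j}$ stabilizes the orthogonal decomposition $V = \gamma\gamma_{2,j}U_2 \oplus (\gamma\gamma_{2,j}U_2)^\perp$ and restricts, on the second summand, to the reflection in the positive subspace $\gamma W_j \cap (\gamma\gamma_{2,j}U_2)^\perp$ of the smaller quadratic space. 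The $\sigma_{\gamma W_j}$-invariance of $\Gamma'$ descends to invariance of the pointwise stabilizer $\Gamma'_{\gamma\gamma_{2,j}U_2}$, so Jaffee's argument applies inside the smaller Shimura variety $Z(\gamma\gamma_{2,j}U_2)_{\Gamma'}$ to give the regular embedding $Z(\gamma W_j)_{\Gamma'} \hookrightarrow Z(\gamma\gamma_{2,j}U_2)_{\Gamma'}$.

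The main obstacle is the construction in step two. The group generated by $\{\sigma_U : U \in \mathcal{C}\}$ inside $O(V)(F)$ is typically infinite, since the product of two reflections in non-orthogonal positive subspaces is generically a rotation of infinite order, so a naive iteration $\Gamma^{(n+1)} = \Gamma^{(n)} \cap \bigcap_U \sigma_U\Gamma^{(n)}\sigma_U$ need not stabilize at a finite-index subgroup. Producing a lattice $L_0$ (or equivalently a finite-index subgroup of $\Gamma$) simultaneously stable under every $\sigma_U$ for $U \in \mathcal{C}$ therefore requires genuinely using the $F$-rationality of the $\sigma_U$ together with the arithmetic structure of $\Gamma_L$, via a careful refinement argument based on the simultaneous decomposition data of $V$ relative to $\mathcal{C}$.
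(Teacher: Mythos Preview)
Your overall strategy---reduce from $\{\gamma U:\gamma\in\Gamma\}$ to the list $\mathcal C$ via normality of $\Gamma'$ and $\sigma_{\gamma U}=\gamma\sigma_U\gamma^{-1}$, then invoke Jaffee---matches the paper. The gap is in step two: a finite-index $\Gamma'\subset\Gamma$ with $\sigma_U\Gamma'\sigma_U=\Gamma'$ simultaneously for every $U\in\mathcal C$ need not exist, and the lattice $L_0$ you describe cannot in general be produced. Already for two totally positive lines $U_1,U_2$ spanning a plane $P$, the product $g=\sigma_{U_1}\sigma_{U_2}$ acts on $P$ as a rotation which generically has infinite order in $O(P)(F)$. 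Any $\Gamma'$ invariant under both $\sigma_{U_i}$ is normalized by $g$; since the normalizer in $O(V)(\R)$ of an arithmetic lattice contains it with finite index, some power $g^N$ would lie in $\Gamma'$, and then $\{g^{Nk}:k\in\Z\}\subset\Gamma'$ would have non-discrete image in the compact group $\prod_j O(P\otimes_{F,\s_j}\R)$, contradicting discreteness of $\Gamma'$. Equivalently, a lattice $L_0$ stable under $G_{\mathcal C}=\langle\sigma_U:U\in\mathcal C\rangle$ would force the infinite group $G_{\mathcal C}|_P$ to embed discretely in a compact group. So the ``careful refinement argument'' you allude to cannot succeed.

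The paper avoids this by asking for much less: only the one-sided containment $\Gamma'\subset\Gamma\cap\sigma_U\Gamma\sigma_U$ for each $U\in\mathcal C$. For $\Gamma''_U:=\Gamma\cap\sigma_U\Gamma\sigma_U$ one has $\sigma_U\Gamma''_U\sigma_U=\Gamma''_U$, so Jaffee gives the regular embedding $Z(U)_{\Gamma''_U}\hookrightarrow S_{\Gamma''_U}$; then Remark~\ref{rem8.3}(iii) passes this down to any finite-index $\Gamma'\subset\Gamma''_U$. One simply takes $\Gamma'$ to be any subgroup normal in $\Gamma$ and contained in $\bigcap_{U\in\mathcal C}\Gamma''_U$, and normality transports the containment to every translate $\gamma U$. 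The last assertion is also simpler than your Jaffee-inside-the-subvariety argument: the composite $Z(\gamma W_j)_{\Gamma'}\to Z(\gamma\gamma_{2,j}U_2)_{\Gamma'}\to S_{\Gamma'}$ is already known to be injective, hence so is its first factor.
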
 
\begin{proof}  Suppose that $\Gamma'$ is a normal subgroup of finite index in $\Gamma$. 
For any $\gamma\in \Gamma$ and totally positive subspace $U$ of $V$,  $\s_{\gamma U} = \gamma \s_U\gamma^{-1}$. Thus, if 
$\Gamma'\subset \Gamma\cap \s_U\Gamma\s_U$,  we have  
$$\Gamma' \ \subset\ \Gamma \cap \s_{\gamma U} \Gamma \s_{\gamma U} = \gamma (\Gamma \cap \s_{U}\Gamma \s_{U})\gamma^{-1}$$
as well. 
By Lemma~\ref{jaffee.lem} and (iii) of Remark~\ref{rem8.3}, it follows that $Z(\gamma U)_{\Gamma'}\ra S_{\Gamma'}$ is a regular embedding.
Thus any normal subgroup $\Gamma'$  of $\Gamma$ such that 
$$\Gamma' \ \subset \ \Gamma \cap (\s_{U_1}\Gamma\s_{U_1} )\cap (\s_{U_2}\Gamma\s_{U_2})\cap \bigcap_{j} (\s_{W_j}\Gamma\s_{W_j})$$
has the required properties. Note that, for $W_j = \gamma_{1,j}U_1+\gamma_{2,j}U_2$, there is a factorization 
$$Z(\gamma W_j)_{\Gamma'} \lra Z(\gamma \gamma_{2,j}U_2)_{\Gamma'} \lra S_{\Gamma'},$$
so that the map 
$$Z(\gamma W_j)_{\Gamma'} \lra Z(\gamma \gamma_{2,j}U_2)_{\Gamma'}$$
is injective and hence is a regular embedding. 
\end{proof}

For totally positive subspaces $U_1$ and $U_2$ of $V$, take a subgroup $\Gamma'\subset \Gamma$ of finite index as in Proposition~\ref{prop-good-cover}, so that we have the 
the diagram
\beq\label{raw-inter-good}
\xymatrix{I(U_1,U_2)_{\Gamma'} \ar[d]_g\ar[r]^j&Z(U_2)_{\Gamma'}\ar[d]^{i_2}\\
Z(U_1)_{\Gamma'}\ar[r]_{i_1}& S_{\Gamma'}
}
\eeq 
is given by 
\beq\label{raw-inter-3}
I(U_1,U_2)_{\Gamma'} = \bigcup_{\gammabold} Z(W_{\gammabold})_{\Gamma'}. 
\eeq
where $\gammabold$ runs over a set of orbit representatives for 
\beq\label{cone-comp-index}
\Gamma'\back \big(\ \Gamma/\Gamma_{U_1}\times \Gamma/\Gamma_{U_2}\ \big).
\eeq
Moreover, $i_1$ and $i_2$ are regular embeddings and $I(U_1,U_2)_{\Gamma'}$ is a union of smooth subvarieties $Z(W_\gammabold)_{\Gamma'}$ of $S_{\Gamma'}$ 
intersecting cleanly. In this situation, we have the following result, whose proof we omit.
\begin{lem}\label{lem-cone-comp}  Let $C' = C_{I(U_1,U_2)_{\Gamma'}}(Z(U_2)_{\Gamma'})$ be the 
normal cone of $I(U_1,U_2)_{\Gamma'}$ in $Z(U_2)_{\Gamma'}$. 
Then the decomposition of $C'$ into irreducible components is given by 
$$C' = \bigcup_{\gammabold} C'_\gammabold, \qquad  C'_\gammabold = N_{Z(W_\gammabold)_{\Gamma'}}(Z(U_2)_{\Gamma'}).$$
\end{lem}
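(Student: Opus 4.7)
My plan is to identify the irreducible components of $C'$ by localizing at generic points of each $Z(W_{\gammabold})_{\Gamma'}$. A normal cone surjects onto its base and has no embedded irreducible components of excess dimension, so every irreducible component of $C'$ maps onto an irreducible component of the support $I(U_1,U_2)_{\Gamma'}$; by (\ref{raw-inter-3}) the latter are precisely the $Z(W_{\gammabold})_{\Gamma'}$. It therefore suffices to show that over a Zariski-open dense subset of each $Z(W_{\gammabold})_{\Gamma'}$, the scheme $I(U_1,U_2)_{\Gamma'}$ coincides, as a subscheme of $Z(U_2)_{\Gamma'}$, with $Z(W_{\gammabold})_{\Gamma'}$ itself, since the normal cone of a smooth closed subscheme is its normal bundle.

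The geometric input I would use is cleanness of the intersection of $Z(\gamma_1 U_1)_{\Gamma'}$ with $Z(\gamma_2 U_2)_{\Gamma'}$ along $Z(W_{\gammabold})_{\Gamma'}$. Choose a general point $z\in Z(W_{\gammabold})_{\Gamma'}$ avoiding all other components $Z(W_{\gammabold'})_{\Gamma'}$; so near $z$, the only pieces of $Z(U_1)_{\Gamma'}$ and $Z(U_2)_{\Gamma'}$ which meet $I(U_1,U_2)_{\Gamma'}$ are $Z(\gamma_1U_1)_{\Gamma'}$ and $Z(\gamma_2U_2)_{\Gamma'}$. Lifting $z$ to $\tilde z\in \Dp_{W_{\gammabold}}$, the tangent space computation from Proposition~\ref{prop-excess-bundle} gives
$$T_{\tilde z}\Dp_{\gamma_1U_1}\cap T_{\tilde z}\Dp_{\gamma_2U_2}=\Hom_F(\tilde z,\tilde z^\perp\cap W_{\gammabold}^\perp)=T_{\tilde z}\Dp_{W_{\gammabold}}.$$
Thus, within $S_{\Gamma'}$, the two smooth subvarieties meet cleanly along the smooth subvariety $Z(W_{\gammabold})_{\Gamma'}$ at $z$.

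Clean intersection of smooth subvarieties in a smooth ambient space allows one to choose local analytic coordinates at $z$ in which $Z(\gamma_1U_1)_{\Gamma'}$ and $Z(\gamma_2U_2)_{\Gamma'}$ are cut out by (partially overlapping) subsets of the coordinate functions; then their scheme-theoretic intersection is reduced and equals $Z(W_{\gammabold})_{\Gamma'}$ on a neighborhood of $z$. Pulling the defining ideal of $Z(U_1)_{\Gamma'}$ back along $Z(U_2)_{\Gamma'}\hookrightarrow S_{\Gamma'}$ therefore cuts out $Z(W_{\gammabold})_{\Gamma'}$ scheme-theoretically near $z$ inside $Z(U_2)_{\Gamma'}$, and the restriction of $C'$ over this open neighborhood is the normal bundle $N_{Z(W_{\gammabold})_{\Gamma'}}(Z(U_2)_{\Gamma'})$. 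Since this bundle is irreducible and of dimension $\dim Z(U_2)_{\Gamma'}$, it is the unique irreducible component of $C'$ lying over $Z(W_{\gammabold})_{\Gamma'}$, yielding the claimed decomposition $C'=\bigcup_{\gammabold}N_{Z(W_{\gammabold})_{\Gamma'}}(Z(U_2)_{\Gamma'})$.

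The main obstacle is the careful bookkeeping at the step where cleanness is converted into a local coordinate presentation that exhibits reducedness of the scheme-theoretic intersection; once this is in place, everything else is the standard correspondence between the irreducible components of a normal cone and those of its reduced support, together with the fact that normal cones of smooth subschemes are normal bundles. Note that the statement addresses only the decomposition into irreducible components, so any non-reducedness of $I(U_1,U_2)_{\Gamma'}$ away from the generic points of the $Z(W_{\gammabold})_{\Gamma'}$, and any resulting multiplicities attached to the $C'_{\gammabold}$, are irrelevant here and would only matter when computing the cycle class $[C']$ in $A_*(C')$.
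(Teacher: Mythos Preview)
The paper omits the proof of this lemma entirely, so there is no argument to compare against. Your strategy---localize at the generic point of each $Z(W_{\gammabold})_{\Gamma'}$, use cleanness of the intersection of $Z(\gamma_1U_1)_{\Gamma'}$ and $Z(\gamma_2U_2)_{\Gamma'}$ to see that the fiber product $I(U_1,U_2)_{\Gamma'}$ is reduced and coincides with $Z(W_{\gammabold})_{\Gamma'}$ there, and hence that the normal cone restricts to the normal bundle over that open set---is the standard one, and it is correct in outline.

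Two points could use an extra sentence. First, your opening claim that purity of $C'$ forces every irreducible component of $C'$ to dominate an irreducible component of $|I|$ is not quite justified as written: pure-dimensionality bounds dimensions but does not by itself exclude components supported over deeper strata where several $Z(W_{\gammabold})_{\Gamma'}$ meet. In the present situation this can be checked directly in the local analytic model on $\Dp$, where all the relevant subvarieties are (cut out on $\Dp_{U_2}$ by) linear equations and the normal cone can be computed by hand. Second, to upgrade the generic identification $C'_{\gammabold}|_U \cong N_{Z(W_{\gammabold})_{\Gamma'}}(Z(U_2)_{\Gamma'})|_U$ to a global equality, observe that both sides are naturally closed irreducible subvarieties of the same ambient bundle $g^*N_{Z(U_1)_{\Gamma'}}(S_{\Gamma'})$---the cone $C'$ via the canonical embedding of Fulton, \S6.1, and the normal bundle as the subbundle $N'$ of Proposition~\ref{inter-prop-1}---so agreement on a dense open forces equality.
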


Combining Proposition~\ref{prop-good-cover} and Corollary~\ref{cor-great}, we obtain our main formula for the intersection of special cycles. 

\begin{theo}\label{theo4.12} For totally positive subspaces $U_1$ and $U_2$ of $V$ with $\dim_F U_i=r_i$, 
$$[Z(U_1)_\Gamma]\cdot [Z(U_2)_\Gamma]  = \sum_{\gammabold} \cbold_{\Gamma}^{r_1+r_2-r(W_\gammabold)}\cap [Z(W_\gammabold)_{\Gamma}] \quad \in  \ch^{(r_1+r_2)d_+}(S_{\Gamma}),$$
where $\gammabold$ runs over the index set 
$\Gamma\back \big( \,\Gamma/\Gamma_{U_1}\times \Gamma/\Gamma_{U_2})$.
\end{theo}
\begin{proof}
For totally positive subspaces $U_1$ and $U_2$ of $V$, there is a subgroup $\Gamma'$ of finite index in $\Gamma$ such that 
$$\pr^*(Z(U_1)_\Gamma\cdot Z(U_2)_\Gamma)  = \sum_{\gammabold} \cbold_{\Gamma'}^{r_1+r_2-r(W_\gammabold)}\cap [Z(W_\gammabold)_{\Gamma'}] \quad \in  \ch^{(r_1+r_2)d_+}(S_{\Gamma'}),$$
where $\gammabold$ runs over (\ref{cone-comp-index}). 
This follows from the discussion following Proposition~6.1 in Fulton where the irreducible components of the normal cone are described by Lemma~\ref{lem-cone-comp}, 
and their contributions, according to Example~6.1.1 of Fulton, are given by Proposition~\ref{prop-good-cover} and Corollary~\ref{cor-great}. 

We will need the following fact. 
\begin{lem}\label{little-lem} Let $\Gamma_\gammabold$ be  stabilizer in $\Gamma$ of the coset $\gammabold \in \Gamma/\Gamma_{U_1} \times \Gamma/\Gamma_{U_2}$. 
Then $\Gamma_{\gammabold}= \Gamma_{W_\gammabold}$.
\begin{proof} Elements of $\Gamma_{\gammabold}$ preserve the subspace $W_\gammabold$ and hence lie in $\tilde \Gamma_{W_\gammabold}=\Gamma_{W_\gammabold}$, in the notation 
of (i) of Lemma~\ref{jaffee.lem}.  Conversely, if $\gamma_0\in \Gamma_{W_\gammabold}$, then $\gamma_0$ acts trivially on $W_\gammabold$ and hence it preserves the subspaces
$\gammabold_1 U_1$ and $\gammabold_2 U_2$. It then lies in both $\Gamma_{\gammabold_1 U_1}=\gammabold_1 \Gamma_{U_1}\gammabold_1^{-1}$ 
and  $\Gamma_{\gammabold_2 U_2}=\gammabold_2 \Gamma_{U_2}\gammabold_2^{-1}$ and hence in $\Gamma_\gammabold$. 
\end{proof} 
\end{lem}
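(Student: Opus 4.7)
The plan is to rewrite both sides of the claimed equality as pointwise stabilizers of subspaces so that the equality becomes essentially linear algebra. Write $\gammabold=(\gamma_1,\gamma_2)$. The stabilizer in $\Gamma$ of a single coset $\gamma_i\Gamma_{U_i}$ under left multiplication is the conjugate subgroup $\gamma_i\Gamma_{U_i}\gamma_i^{-1}$. Since $\Gamma_{U_i}$ is by definition the subgroup of elements acting trivially on $U_i$, conjugation by $\gamma_i$ carries this onto the subgroup $\Gamma_{\gamma_iU_i}$ of elements acting trivially on $\gamma_iU_i$. Intersecting the stabilizers of the two cosets gives
\[
\Gamma_{\gammabold}\ =\ \gamma_1\Gamma_{U_1}\gamma_1^{-1}\cap \gamma_2\Gamma_{U_2}\gamma_2^{-1}\ =\ \Gamma_{\gamma_1U_1}\cap\Gamma_{\gamma_2U_2}.
\]

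The second step is to identify this intersection with $\Gamma_{W_\gammabold}$, where $W_\gammabold=\gamma_1U_1+\gamma_2U_2$. The inclusion $\Gamma_{W_\gammabold}\subseteq\Gamma_{\gamma_1U_1}\cap\Gamma_{\gamma_2U_2}$ is immediate, since fixing $W_\gammabold$ pointwise forces fixing each of its subspaces pointwise. The reverse inclusion is a one-line linear-algebra argument: the action of $\Gamma$ on $V$ is $F_\R$-linear, so any element that fixes $\gamma_1U_1$ and $\gamma_2U_2$ pointwise also fixes their linear span $W_\gammabold$ pointwise. Combining the two steps gives $\Gamma_{\gammabold}=\Gamma_{W_\gammabold}$.

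If one prefers to route the argument through Lemma~\ref{jaffee.lem}(i) as in the surrounding text, an alternative is to first show only the weaker statement that $\gamma\in\Gamma_\gammabold$ \emph{preserves} $W_\gammabold$ as a subspace (again using $\gamma \gamma_iU_i=\gamma_iU_i$ for $i=1,2$ and taking sums). The sum $W_\gammabold$ of two totally positive definite subspaces is still totally positive definite, so Lemma~\ref{jaffee.lem}(i) applies to $W_\gammabold$ and upgrades $\tilde\Gamma_{W_\gammabold}=\Gamma_{W_\gammabold}$; this yields the same conclusion. Either way, the argument is purely formal once the definitions are unwound, and I do not anticipate an obstacle beyond correctly invoking neatness via Lemma~\ref{jaffee.lem}(i) when passing between "stabilizes as a subspace" and "acts trivially on".
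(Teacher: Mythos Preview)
Your argument is correct and is essentially the same as the paper's: both identify $\Gamma_{\gammabold}$ with $\gamma_1\Gamma_{U_1}\gamma_1^{-1}\cap\gamma_2\Gamma_{U_2}\gamma_2^{-1}=\Gamma_{\gamma_1U_1}\cap\Gamma_{\gamma_2U_2}$ and then note that fixing $\gamma_1U_1$ and $\gamma_2U_2$ pointwise is equivalent to fixing their span $W_{\gammabold}$ pointwise. The paper phrases the forward inclusion via ``preserves $W_{\gammabold}$'' and then invokes Lemma~\ref{jaffee.lem}(i) to pass to the pointwise stabilizer, exactly the alternative route you describe.
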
 
Now we pass back to our original $\Gamma$. 
For $\gammabold\in \Gamma/\Gamma_{U_1}\times \Gamma/\Gamma_{U_2}$, let 
$r(\gammabold) = \dim_F W_\gammabold$. 
With this notation and using Lemma~\ref{little-lem}, we have
\begin{align*}
\sum_{\substack{ \gammabold \\ \snass r(\gammabold)=r \\ \snass \mod \Gamma'}}  [Z(W_\gammabold)_{\Gamma'}] &= \sum_{\substack{ \gammabold \\ \snass r(\gammabold)=r \\ \snass \mod \Gamma}}  
\sum_{\gamma\in \Gamma'\back \Gamma/\Gamma_{W_\gammabold}} [Z(\gamma W_\gammabold)_{\Gamma'}] \\
\nass
{}&= \sum_{\substack{ \gammabold \\ \snass r(\gammabold)=r \\ \snass \mod \Gamma}}\pr^*([Z(W_\gammabold)_\Gamma]).
\end{align*}
Thus 
$$\pr^*(Z(U_1)_\Gamma\cdot Z(U_2)_\Gamma)  = \sum_r  \cbold_{\Gamma'}^{r_1+r_2-r}\cap 
\bigg(\  \sum_{\substack{ \gammabold \\ \snass r(\gammabold)=r \\ \snass \mod \Gamma}}\pr^*([Z(W_\gammabold)_\Gamma])\ \bigg),$$
and hence
\begin{align*}
\pr_*\pr^*(Z(U_1)_\Gamma\cdot Z(U_2)_\Gamma)  &= \sum_r  \pr_*\bigg(\ \cbold_{\Gamma'}^{r_1+r_2-r}\cap 
\bigg(\  \sum_{\substack{ \gammabold \\ \snass r(\gammabold)=r \\ \snass \mod \Gamma}} \pr^*([Z(W_\gammabold)_\Gamma])\ \bigg)\ \bigg)\\
\nass
{}&=\sum_r  \pr_*(\ \cbold_{\Gamma'}^{r_1+r_2-r}\ )\cap 
\bigg(\  \sum_{\substack{ \gammabold \\ \snass r(\gammabold)=r \\ \snass \mod \Gamma}} [Z(W_\gammabold)_\Gamma])\ \bigg).
\end{align*}
But by definition $\cbold_{\Gamma'} = \pr^*( \cbold_\Gamma)$, and so, canceling the index $|\Gamma:\Gamma'|$ from both sides, we have 
$$Z(U_1)_\Gamma\cdot Z(U_2)_\Gamma = \sum_{\substack{\gammabold\\ \snass \mod \Gamma}} \cbold_{\Gamma}^{r_1+r_2-r(\gammabold)}\cap [Z(W_\gammabold)_{\Gamma}]$$
as claimed.
\end{proof}

\section{Generating series for special cycles: ad\`elic version}\label{section5}

In this section, we formulate a version of the generating series for special cycles in ad\`elic language using cycles weighted by Schwartz functions 
on the finite ad\'eles of $V$. 
This is a slight variation\footnote{In previous cases the parameter $d_+=1$.} of the setup of \cite{K.duke} also used in \cite{YZZ}.  
It is far more convenient than the classical setup of Section~\ref{section2} for pullback arguments and intersection relations. 

Most of the definitions and results of \cite{K.duke}, where $d_+=1$,  remain unchanged when $d_+$ is arbitrary. We will continue to assume that 
$d_+<d$, however, so that our varieties will be proper over $\C$.  We will not repeat all of the discussion of \cite{K.duke}, but will simply recall the
main results and note where minor modifications are needed. There will be a slight shift in notation. 

\subsection{Weighted cycles}

Let $G=R_{F/\Q}\GSpin(V)$ and for a compact open subgroup $K \subset G(\A_f)$, let 
\beq\label{def-SK}
S_K = G(\Q)\back D\times G(\A_f)/K.
\eeq
If $K$ is neat, $S_K$ is a union of smooth projective varieties as discussed in Section~\ref{section2}. 
More precisely, we write 
$$G(\A_f) = \bigsqcup_j G(\Q)_+ g_j K,$$
where $G(\Q)_+$ is the subgroup of element with totally positive spinor norm, and, for $g\in G(\A_f)$, we let $\Gamma_g = G(\Q)_+\cap g K g^{-1}$. 
Then, we have the decomposition,  \cite{K.duke} (1.5),  
$$S_K \simeq \bigsqcup_j \Gamma_{g_j}\back \Dp = \bigsqcup_j S_{\Gamma_{g_j}}.$$

 If $U$ is a totally positive definite subspace of $V$,  $Z(U)_{\Gamma_g}$ is 
a connected cycle of codimension $r(U)d_+$  in $S_{\Gamma_g}$.   In the notation of \cite{K.duke}, (3.3), $Z(U)_{\Gamma_g}=c(U,g,K)$.  

The weighted cycles are defined as follows. 
For $\ph \in S(V(\A_f)^n)$ and $T \in \Sym_n(F)_{\ge0}$ totally positive semi-definite, 
let
\beq\label{def-wt-cycle}
Z(T,\ph,K) := \sum_j \sum_{\substack{x\in \O_T(F) \\ \snass \mod \Gamma_{g_j}}} \ph(g_j^{-1} x) \, [\,Z(U(x))_{\Gamma_{g_j}}\,] \cap \cbold_{\Gamma_{g_j}}^{n-r(x)},
\eeq
where 
\beq\label{def-O-T}
\O_T = \{\ x\in V^n \mid Q(x) = T\ \}.
\eeq
Then $Z(T,\ph,K)$ is an element of $\ch^{n d_+}(S_K)\tt_\Q R$, where $R$ is the subfield of $\C$ where $\ph$ takes values.  
We will take $R=\C$ from now on. 
Note that 
\beq\label{0-class}
Z(0,\ph,K) = \cbold_S^n\cdot \ph(0),
\eeq
where the restriction of $\cbold_S$ to $S_{\Gamma_{g_j}}$ is $\cbold_{\Gamma_{g_j}}$. 

\begin{rem} Here we have taken as our definition the analogue of the expression given in Proposition~5.4 of \cite{K.duke} in the case $d_+=1$.
The alternative definition in terms of `natural' cycles and the the proof of the coincidence of the two definitions is given in Section~\ref{section10}. 
\end{rem}

The equivariance and pullback properties of Propositions~5.9 and 5.10 of \cite{K.duke} go over without change\footnote{It should be noted however, that 
the proofs depend on the relations between the connected cycles and the `natural' cycles of Section 2 of \cite{K.duke} and Section\ref{section10} below. These relations depend, in turn, on 
Lemma~5.7 of \cite{K.duke}, which is due to Weil.}. \hfb 
For any $g\in G(\A_f)$, 
\beq\label{g-dot}
Z(T,\o(g)\ph,gKg^{-1}) = Z(T,\ph,K)\cdot g^{-1}.
\eeq
If $K$ is neat and $K'\subset K$ is another compact open subgroup of $G(\A_f)$, then
\beq\label{K-lim}
\pr^*(Z(T,\ph,K)) = Z(T,\ph, K'),
\eeq
where $\pr:S_{K'}\ra S_K$ is the natural projection. As a result, we have well defined classes 
\beq\label{dir-lim}
Z(T,\ph) \in \ch^{nd_+}(S) := \varinjlim_K \ch^{nd_+}(S_K).
\eeq

In the case $d_+=1$, the following result was suggested in \cite{K.duke}, Remark~6.3, and proved in \cite{YZZ}, Proposition~2.6.
\begin{prop}\label{prop5.1}  For $T_i\in \Sym_{n_i}(F)_{\ge0}$ and $\ph_i\in S(V(\A_f)^{n_i})^K$, 
$$Z(T_1,\ph_1,K)\cdot Z(T_2,\ph_2,K) = \sum_{\substack{ T \in \Sym_{n_1+n_2}(F)_{\ge0}\\ \snass T = \bpm \scr T_1&*\\ \scr {}^t*&\scr T_2\epm} } Z(T,\ph_1\tt\ph_2,K)\ \in \ch^{(n_1+n_2)d_+}(S_K).$$
\end{prop}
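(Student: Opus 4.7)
The plan is to reduce Proposition~\ref{prop5.1} to the classical intersection formula Theorem~\ref{theo4.12} via an unfolding-and-reindexing argument, adapting the strategy of \cite{YZZ}, Proposition~2.6, to the general $d_+$ setting. First I would decompose $S_K = \bigsqcup_j S_{\Gamma_{g_j}}$ and work one component at a time, writing $\Gamma = \Gamma_{g_j}$ and $g = g_j$. On each component, the defining expression (\ref{def-wt-cycle}) gives
$$Z(T_i,\ph_i,K)|_{S_\Gamma} = \sum_{x_i \in \O_{T_i}(F)/\Gamma} \ph_i(g^{-1} x_i)\, [Z(U(x_i))_\Gamma] \cap \cbold_\Gamma^{n_i - r(x_i)},$$
so the product expands as a double sum over $(x_1,x_2)$ of classical intersections $[Z(U(x_1))_\Gamma] \cdot [Z(U(x_2))_\Gamma]$ further cupped with $\cbold_\Gamma^{(n_1-r(x_1))+(n_2-r(x_2))}$. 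Writing $U_i = U(x_i)$, I would invoke Theorem~\ref{theo4.12} to rewrite each inner intersection as $\sum_\gammabold \cbold_\Gamma^{r(x_1)+r(x_2)-r(W_\gammabold)} \cap [Z(W_\gammabold)_\Gamma]$, indexed by $\gammabold \in \Gamma\backslash(\Gamma/\Gamma_{U_1}\times \Gamma/\Gamma_{U_2})$ with $W_\gammabold = \gamma_1 U_1 + \gamma_2 U_2$; the Chern-class exponents combine to exactly $n_1+n_2 - r(W_\gammabold)$, matching the right-hand side of the claim.

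The crucial step is the reindexing. For each triple $(x_1, x_2, \gammabold)$, I form $y = (\gamma_1 x_1, \gamma_2 x_2) \in V^{n_1+n_2}$; then $Q(y) = T$ has diagonal blocks $T_1, T_2$, $U(y) = W_\gammabold$, and the contribution becomes $\cbold_\Gamma^{n_1+n_2-r(y)} \cap [Z(U(y))_\Gamma]$, precisely the $y$-summand of $Z(T,\ph_1\tt\ph_2,K)|_{S_\Gamma}$. I would then check that $(x_1,x_2,\gammabold) \mapsto [y]$ is a bijection onto $\O_T(F)/\Gamma$ for each block-decomposed $T$, using the fact that neatness of $\Gamma$ together with Lemma~\ref{jaffee.lem}(i) forces $\mathrm{Stab}_\Gamma(x_i) = \Gamma_{U(x_i)}$. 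This yields the standard orbit-stabilizer decomposition
$$\Gamma \backslash (\O_{T_1}(F) \times \O_{T_2}(F)) \ \cong \ \coprod_{(x_1,x_2) \in (\O_{T_1}(F)/\Gamma)\times(\O_{T_2}(F)/\Gamma)} \Gamma_{U_1} \backslash \Gamma / \Gamma_{U_2},$$
which matches the indexing from Theorem~\ref{theo4.12} after normalizing $\gamma_1 = 1$ via the diagonal action. The Schwartz weight is preserved under this reindexing: $\ph_1(g^{-1}x_1)\ph_2(g^{-1}x_2) = \ph_1(g^{-1}y_1)\ph_2(g^{-1}y_2) = (\ph_1\tt\ph_2)(g^{-1}y)$, since $\gamma_i \in \Gamma \subset gKg^{-1}$ and each $\ph_i$ is $K$-invariant. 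Summing over $j$ and over block-decomposable $T$ then recovers the stated identity.

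The main obstacle I expect is this combinatorial bookkeeping: aligning the equivalence relation on triples $(x_1, x_2, \gammabold)$ (coming from independent $\Gamma$-orbits on each $x_i$ together with the Theorem~\ref{theo4.12} indexing on $\gammabold$) with the single diagonal $\Gamma$-orbit equivalence on pairs $y = (y_1, y_2) \in \O_T(F)$, and checking that the Schwartz weights descend consistently under every change of representative. Once this identification is in hand, everything else follows mechanically from Theorem~\ref{theo4.12}, the neatness of $\Gamma$, and the $K$-invariance of the Schwartz data.
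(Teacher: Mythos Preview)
Your proposal is correct and follows essentially the same route as the paper's proof: expand the product component-by-component via (\ref{def-wt-cycle}), apply Theorem~\ref{theo4.12} to each $[Z(U(x_1))_\Gamma]\cdot[Z(U(x_2))_\Gamma]$, and then unwind the triple sum over $(x_1,x_2,\gammabold)$ into a single $\Gamma$-orbit sum over $y\in\O_T(F)$. The paper dispatches the reindexing in one line (``the whole intersection number then unwinds to\ldots''), whereas you spell out the bijection and the preservation of the Schwartz weights under $K$-invariance; your version is a faithful expansion of exactly that step, and the minor over-justification (invoking Lemma~\ref{jaffee.lem}(i) for $\mathrm{Stab}_\Gamma(x_i)=\Gamma_{U(x_i)}$, which already holds tautologically since fixing the tuple $x_i$ is fixing $U(x_i)$ pointwise) does no harm.
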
 
\begin{rem} By invariance under pullback, for classes in the direct limit (\ref{dir-lim}),  we have
\beq\label{dir-lim-prod}
Z(T_1,\ph_1)\cdot Z(T_2,\ph_2) = \sum_{\substack{ T \in \Sym_{n_1+n_2}(F)_{\ge0}\\ \snass T = \bpm \scr T_1&*\\ \scr {}^t*&\scr T_2\epm} } Z(T,\ph_1\tt\ph_2)\ 
\in \ch^{(n_1+n_2)d_+}(S).
\eeq
\end{rem}
\begin{proof} Choose $K$ neat such that $\ph_1$ and $\ph_2$ are $K$-invariant and compute
\begin{align*}
Z(T_1,&\ph_1,K)\cdot Z(T_2,\ph_2,K)\\
\nass
{}&= \sum_j \sum_{\substack{x_1\in \O_{T_1}(F) \\ \snass \mod \Gamma_{g_j}}}\sum_{\substack{x_2\in \O_{T_2}(F) \\ \snass \mod \Gamma_{g_j}}}
\ph_1(g_j^{-1} x_1)\,\ph_2(g_j^{-1} x_2)\\
\nass
{}&\hskip 1.5in \times [\,Z(U(x_1))_{\Gamma_{g_j}}\,]\cdot [\,Z(U(x_2))_{\Gamma_{g_j}}\,] \cap \cbold_{\Gamma_{g_j}}^{n_1+n_2-r(x_1)-r(x_2)}.
\end{align*}
By Theorem~\ref{theo4.12}, the intersection number is given by 
$$[\,Z(U(x_1))_{\Gamma_{g_j}}\,]\cdot [\,Z(U(x_2))_{\Gamma_{g_j}}\,] = \sum_r \cbold_{\Gamma}^{r_1+r_2-r}\cap \bigg(\ \sum_\gammabold [Z(W_\gammabold)_{\Gamma_{g_j}}]\ \bigg)$$
where $\gammabold$ runs over the $\Gamma_{g_j}$-orbits in the set
$$
\{\,\gammabold\in \Gamma_{g_j}/\Gamma_{g_j,U(x_1)} \times \Gamma_{g_j}/\Gamma_{g_j, U(x_2)} \mid r(\gammabold)=r\, \}.
$$
The whole intersection number then unwinds to 
\begin{align*}
Z(T_1,&\ph_1,K)\cdot Z(T_2,\ph_2,K)\\
\nass
{}&= \sum_j \sum_{\substack{ T \in \Sym_{n_1+n_2}(F)_{\ge0}\\ \snass T = \bpm \scr T_1&*\\ \scr {}^t*&\scr T_2\epm} } \sum_{\substack{x\in \O_{T}(F) \\ \snass \mod \Gamma_{g_j}}}
(\ph_1\tt\ph_2)(g_j^{-1} x)\, [\,Z(U(x))_{\Gamma_{g_j}}\,] \cap \cbold_{\Gamma_{g_j}}^{n_1+n_2-r(x)}, 
\end{align*}
and this gives the claimed expression. 
\end{proof}

\subsection{The generating series}
Here we use the notation of Sections 7 and 8 of \cite{K.duke}.

For $g\in G(\A_f)$, $\tau\in \H_n^d$ and $\ph\in S(V(\A_f)^n)^K$, define the formal generating series
\beq\label{adelic-gen}
\phi_n(\tau;\ph,K) = \sum_{T\in \Sym_n(F)_{\ge0}} [Z(T,\ph,K)]\, \qq^T\ \in \ch^{nd_+}(S_{K})_\C[[\qq]].
\eeq
Note that for $g\in G(\A_f)$, by (\ref{g-dot}),  we have 
$$\phi_n(\tau;\ph,K)\cdot g^{-1} = \sum_{T\in \Sym_n(F)_{\ge0}} [Z(T,\o(g)\ph,gKg^{-1})]\, \qq^T\ \in \ch^{nd_+}(S_{g Kg^{-1}})_\C[[\qq]].$$
Passing to the limit over $K$, noting (\ref{K-lim}), and writing 
$$\phi_n(\tau;\ph) = \sum_{T\in \Sym_n(F)_{\ge0}} [Z(T,\ph)]\, \qq^T\ \in \ch^{nd_+}(S)_\C[[\qq]],$$
we see that the formal series is equivariant with respect to the 
actions of $G(\A_f)$ on $S(V(\A_f)^n)$ and on $\ch^{nd_+}(S)$.

As a consequence of the product formula (\ref{dir-lim-prod}), we have a product formula for the formal generating series. 
For $n=n_1+n_2$ with $n_i\ge 1$, let 
$$J: \H_{n_1}^d\times \H_{n_2}^d \lra \H_n^d, \qquad (\tau_1,\tau_2) \mapsto \bpm \tau_1&{}\\{}&\tau_2\epm.$$
\begin{prop} For $n=n_1+n_2$ with $n_i\ge1$ and for $\ph_1\in S(V(\A_f)^{n_1})$ and $\ph_2\in S(V(\A_f)^{n_2})$, 
\beq\label{gen-fun-prod}
\phi_n(\bpm \tau_1&{}\\{}&\tau_2\epm;\ph_1\tt\ph_2) = \phi_{n_1}(\tau_1;\ph_1)\cdot \phi_{n_2}(\tau_2;\ph_2).
\eeq
In particular, the $T_2$-coefficient with respect to $\qq_2$ of the pullback is given by 
\beq\label{gen-fun-prod-alt}
\phi_n(\bpm \tau_1&{}\\{}&\tau_2\epm;\ph_1\tt\ph_2)_{T_2} = \phi_{n_1}(\tau_1;\ph_1)\cdot Z(T_2,\ph_2).
\eeq
and the $\qq_2$-constant term of the pullback
\beq\label{gen-fun-prod-alt-0}
\phi_n(\bpm \tau_1&{}\\{}&\tau_2\epm;\ph_1\tt\ph_2)_{0} =\ph_2(0) \sum_{T_1} Z(T_1,\ph_1)\cdot \cbold_S^{n-n_1}\ \qq_1^{T_1}
\eeq

\end{prop}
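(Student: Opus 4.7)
The plan is to derive the product formula for the generating series as a direct formal consequence of the intersection product formula (\ref{dir-lim-prod}) established in Proposition~\ref{prop5.1}, combined with the factorization of the exponential $\qq^T$ along block-diagonal $\tau$. I expect the entire argument to be essentially bookkeeping; the content is already contained in the product formula on cycles.

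\textbf{Step 1 (factorization of $\qq^T$).} First I would observe that if $\tau = \bpm \tau_1 & 0 \\ 0 & \tau_2\epm \in \H_n^d$ is block-diagonal and $T = \bpm T_1 & * \\ {}^t* & T_2\epm \in \Sym_n(F)_{\ge 0}$ is written in conformal block form, then for each archimedean embedding $\s_j$ one has
$$\tr(\s_j(T)\,\tau^{(j)}) = \tr(\s_j(T_1)\,\tau_1^{(j)}) + \tr(\s_j(T_2)\,\tau_2^{(j)}),$$
since the off-diagonal blocks of $T$ are multiplied by zero. Summing over $j$ and exponentiating gives $\qq^T = \qq_1^{T_1}\,\qq_2^{T_2}$ on the block-diagonal locus, regardless of the off-diagonal blocks of $T$.

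\textbf{Step 2 (regroup and apply Proposition~\ref{prop5.1}).} Substituting the block-diagonal $\tau$ into the definition (\ref{adelic-gen}) of $\phi_n$ and regrouping the sum over $T \in \Sym_n(F)_{\ge 0}$ according to its diagonal blocks $(T_1, T_2)$, I obtain
$$\phi_n\!\left(\bpm \tau_1 & \\ & \tau_2\epm;\ph_1\tt\ph_2\right) = \sum_{T_1, T_2} \Bigg(\sum_{\substack{T \in \Sym_n(F)_{\ge 0}\\ T = \bpm T_1 & * \\ {}^t* & T_2\epm}} [Z(T,\ph_1\tt\ph_2)]\Bigg) \qq_1^{T_1}\,\qq_2^{T_2}.$$
By the product formula (\ref{dir-lim-prod}), the inner sum equals $[Z(T_1,\ph_1)] \cdot [Z(T_2,\ph_2)]$ in $\ch^{nd_+}(S)$. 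The right-hand side then factors as the Cauchy product
$$\left(\sum_{T_1} [Z(T_1,\ph_1)]\,\qq_1^{T_1}\right) \cdot \left(\sum_{T_2} [Z(T_2,\ph_2)]\,\qq_2^{T_2}\right) = \phi_{n_1}(\tau_1;\ph_1)\cdot \phi_{n_2}(\tau_2;\ph_2),$$
giving the identity (\ref{gen-fun-prod}).

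\textbf{Step 3 (specializations).} The $\qq_2^{T_2}$-coefficient identity (\ref{gen-fun-prod-alt}) is now read off directly from the Cauchy product. For (\ref{gen-fun-prod-alt-0}), I set $T_2 = 0$ and invoke the identity $Z(0,\ph_2) = \ph_2(0)\,\cbold_S^{n_2}$ from (\ref{0-class}), noting that $n_2 = n - n_1$; this turns the $\qq_2$-constant term into $\ph_2(0)\,\phi_{n_1}(\tau_1;\ph_1)\cdot \cbold_S^{n-n_1}$, which is exactly the claimed expression.

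The only potentially subtle point is the factorization of $\qq^T$ in Step~1, and it is handled by the elementary trace identity above; no genuine obstacle appears, since all of the geometric content has already been absorbed into Proposition~\ref{prop5.1}.
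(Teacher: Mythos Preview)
Your proof is correct and follows essentially the same approach as the paper: both arguments reduce immediately to the product formula (\ref{dir-lim-prod}) together with the elementary factorization $\qq^T = \qq_1^{T_1}\,\qq_2^{T_2}$ on the block-diagonal locus. You simply run the computation from the left-hand side to the right while the paper runs it from right to left, and you spell out the specializations (\ref{gen-fun-prod-alt}) and (\ref{gen-fun-prod-alt-0}) that the paper leaves implicit.
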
 
\begin{proof}
\begin{align*}
 \phi_{n_1}(\tau_1;\ph_1)\cdot \phi_{n_2}(\tau_2;\ph_2)&= \sum_{T_1,T_2} Z(T_1,\ph_1)\cdot Z(T_2,\ph_2)\,\qq_1^{T_1}\,\qq_2^{T_2}\\
 \nass
 {}&= \sum_{T_1,T_2} \sum_{ T = \bpm \scr T_1&*\\ \scr {}^t*&\scr T_2\epm}  Z(T,\ph_1\tt\ph_2)\  J^* \qq^T.
\end{align*}
\end{proof} 
\begin{rem}  In \cite{K.duke}, such a product formula for generating series valued in cohomology groups was a consequence of the 
product formula for the theta forms (\ref{theta-form}), and was used to prove the cup product version of (\ref{dir-lim-prod}). Here we have given a 
direct geometric proof of (\ref{dir-lim-prod}) and obtain (\ref{gen-fun-prod}) from it. 
\end{rem} 

Using the definition (\ref{def-wt-cycle}) of the weighted cycles, we can also write our formal series as 
\beq\label{theta-form}
\phi_n(\tau;\ph,K) = \sum_j \sum_{\substack{x\in V(F)^n \\ \snass \mod \Gamma_{g_j}}} \ph(g_j^{-1} x) \, [\,Z(U(x))_{\Gamma_{g_j}}\,] \cap \cbold_{\Gamma_{g_j}}^{n-r(x)}\cdot  \qq^{Q(x)},
\eeq
a kind of formal theta-function.

For any complex valued linear functional $\l$ on $\ch^{nd_+}(S_K)$, 
let
\beq\label{power-series}
\phi_n(\tau;\ph,K,\l) = \sum_{T\in \Sym_n(F)_{\ge0}} \l\big(\ [Z(T,\ph,K)]\ \big)\, \qq^T \in \C[[\qq]].
\eeq

Recall that the generating series $\phi_n(\tau;\ph,K)$ is said to be modular if, for every linear functional $\l$, the 
formal power series (\ref{power-series}) is absolutely convergent and the resulting holomorphic function on $\H_n^{d}$ 
is a Hilbert-Siegel modular form.

\subsection{Classes in cohomology} 
Taking $\l$ to be the cycle class map $\text{\rm cl}: \ch^{n d_+}(S_K) \ra H^{2n d_+}(S_K)$, we have 
\beq\label{KM-modular}
\phi_n(\tau;\ph,K,\text{\rm cl}) =\sum_{T\in \Sym_n(F)_{\ge0}}\text{\rm cl}( [Z(T,\ph,K)]\,)\, \qq^T\  \in H^{2 n d_+}(S_K).
\eeq
The modularity of this series is the analogue with Theorem~8.1 of \cite{K.duke} for $d_+$ arbitrary. 
We briefly sketch the argument, referring to Section 7 of \cite{K.duke} for details.

Let $G' = R_{F/\Q} \Sp(n)$ and let $\wtg(\A)$ be the metaplectic cover of $G'(\A)$.
 The metaplectic group $\wtg(\A)$ acts on $S(V(\A)^n)$ via the Weil representation $\o=\o_\psi$ determined by the additive character $\psi$ of 
$F_\A/F$. This action commutes with the linear action of $g\in G(\A)$, $\o(g)\ph(x) = \ph(g^{-1} x)$. 

Let 
\beq\label{arch-SF}
\phbold_\infty = \underset{d_+}{\underbrace{\ph^{(n)}\tt \dots \tt \ph^{(n)}}} \tt \underset{d-d_+}{\underbrace{\ph_+^0 \tt \dots \tt \ph_+^0}} \ \in [S(V_\R^n) \tt A^{(nd_+,nd_+)}(D)]^{G(\R)},
\eeq
where, for $1\le j \le D_+$, $\ph^{(n)}\in S(V_j^n)\tt A^{(n,n)}(D_j)$ is the Schwartz form for $V_j$, and, for $j>d_+$, $\ph_+^0\in S(V_j^n)$ is the Gaussian for $V_j$, cf. Sections 7 and 8 of \cite{K.duke}.

For use later, we define a $(1,1)$-form on $D^{(j)}$ by 
\beq\label{def-Omega}
\Om_j = \ph^{(1)}(0),
\eeq
and note that $\ph^{(n)}(0) = \Om_j^n$. 
By Corollary~4.12 of \cite{K.Bints}, the form $\Om_j$ on the factor $D^{(j)}$  is the first Chern form of the inverse of the tautological bundle on the 
space of oriented negative $2$-planes in 
$V_j$. 
Thus 
\beq\label{Om-top}
\Om_S^n = \phbold_\infty(0)
\eeq
is an $(nd_+,nd_+)$-form representing the class $\cbold_S^n$, the $n$-th power of the top Chern class of the vector bundle 
$\boldCC_S$ defined in  (\ref{bold-CS}). 

For $\ph \in S(V(\A_f)^n)^K$, let $\widetilde{\ph} = \phbold_\infty\tt \ph \in S(V(\A)^n)$. Then the theta series 
\beq\label{theta-form}
\theta(g',g;\ph) = \sum_{x\in V(F)^n} \o(g')\widetilde{\ph}(g^{-1}x), \qquad g\in G(\A_f), \ g'\in \wtg(\A),
\eeq
defines a closed $(nd_+,nd_+)$-form on $S_K$ and is left invariant for the (canonical) image of $G'(\Q)$ in $\wtg(\A)$. 
When $g=1$, we will write simply $\theta(g';\ph)$ for this series.  

Let $Mp(n,\R)$ be the metaplectic cover of $\Sp(n,\R)$, and, for $g_0'\in Mp(n,\R)$, write 
$$g_0' = \bpm 1&u\\{}&1\epm \bpm v^{\frac12} &{}\\{}&{}^tv^{-\frac12}\epm k', \qquad \tau = u+i v\in \H_n,$$
as in (7.21) of \cite{K.duke}, where $k'\in K'$, the $2$-fold cover of $U(n)$.  Then for $T\in \Sym_n(\R)$, define the Whittaker function 
$$W_T(g_0') = \det(v)^{\frac{m+2}4} e(\tr(T\tau))\,\det(k')^{\frac{m+2}4}.$$
When $k'=1$, we write $g'_0= g'_\tau$ and note that 
$$\det(v)^{-\frac{m+2}4}\,W_T(g'_\tau) = e(\tr(T\tau)).$$
There is a surjection $Mp(n,\R)^d \ra \wtg(\R)$ and an inclusion 
$\wtg(\R) \hookrightarrow \wtg(\A)$.  For $T\in \Sym_n(F)$ and $g'\in \wtg(\R)$, let 
$$W_T(g') = W_{T_1}(g'_1)\dots W_{T_d}(g'_d),$$
where $T_j = \s_j(T)$ and $(g_1,\dots, g_d)\in Mp(n,\R)^d$ is a preimage of $g'$.
Note that 
$$\qq^T=N(\det(v))^{-\frac{m+2}4}\,W_T(g'_\tau),$$
where $N(\det(v)) = \prod_j \det(v_j)$. 

The analogue of Theorem~8.1 of \cite{K.duke} is that, as a consequence of \cite{KM1}, \cite{KM2} and \cite{KM3}, 
the cohomology class\footnote{Which we denote by  $[[\,\theta(g';\ph)\,]]$.}
of $\theta(g';\ph)$ is given by 
\beq\label{KM-relation}
[[\,\theta(g'_\tau;\ph)\,]] = N(\det(v))^{\frac{m+2}4} \sum_{T\in \Sym_n(F)_{\ge 0} }\text{\rm cl}([Z(T,\ph)])\,\qq^T \ \in H^{2nd_+}(S).
\eeq
The invariance of $\theta(g';\ph)$ under $G'(\Q)$ implies that the series (\ref{KM-modular})  is the $\qq$-expansion
of a Hilbert-Siegel modular form of weight $(\frac{m}2+1, \dots, \frac{m}2+1)$.

\section{A pullback formula}\label{section6}

\subsection{The classical version}
In this section we suppose that $U_0$ is a totally positive subspace of $V$ and let  
$$\rho: \Dp_0 = \Dp_{U_0} \lra \Dp.$$
Let $\Gamma_0=\Gamma_{U_0}$ and
suppose that $\s_0\Gamma\s_0 = \Gamma$, where $\s_0=\s_{U_0}$,  so that we have a regular embedding 
$$\rho:S^0_{\Gamma} = Z(U_0)_\Gamma = \Gamma_0\back \Dp_0  \lra \Gamma\back \Dp =S_\Gamma$$
of non-singular varieties. There is then a pullback homomorphism 
$$\rho^*:\ch^\bullet(S_\Gamma) \lra \ch^\bullet(S^0_\Gamma)$$
of Chow rings. Note that the tautological bundles $\mathcal L_j$ on $S_\Gamma$ pull back to the corresponding tautological bundles on $S^0_\Gamma$ and hence
 $\rho^* \cbold_S = \cbold_{S^0}$. 
The proof of the following result is analogous to that of Theorem~\ref{theo4.12} and will be omitted. 
\begin{prop}\label{pullback-classical}  Suppose that $U_0$ is a totally positive definite subspace of $V$, as above. Then for any totally positive subspace $U$ in $V$, 
$$\rho^*([Z(U)]_\Gamma) = \sum_{\gamma\in \Gamma_0\back \Gamma/\Gamma_U} [Z(\pr_0(\gamma U))_{\Gamma_0}]\cap \cbold_{S^0}^{(r(U)-r(\pr_0(\gamma U)))},$$
where $\pr_0:V \ra U_0^\perp$ is the orthogonal projection. 
\end{prop}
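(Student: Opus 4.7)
My strategy is to parallel the proof of Theorem~\ref{theo4.12}, the essential structural difference being that we keep the resulting cycle on $S^0_\Gamma$ rather than pushing it forward to $S_\Gamma$. First I describe the set-theoretic fiber product $S^0_\Gamma\times_{S_\Gamma}Z(U)_\Gamma$: a point $z\in\Dp_{U_0}$ lies in $\Dp_{\gamma U}$ if and only if $z\perp \gamma U$, which, since $z\subset U_0^\perp$, is equivalent to $z\perp \pr_0(\gamma U)$ inside $U_0^\perp$. Therefore
$$S^0_\Gamma\times_{S_\Gamma}Z(U)_\Gamma\ =\ \bigcup_{\gamma}Z(\pr_0(\gamma U))_{\Gamma_0}$$
set-theoretically, where $\gamma$ runs over representatives of $\Gamma_0\back\Gamma/\Gamma_U$.

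Next, by a variant of Proposition~\ref{prop-good-cover}, there is a neat normal subgroup $\Gamma'\subset\Gamma$ of finite index such that, writing $\Gamma'_0=\Gamma'\cap\Gamma_0$, all of the morphisms $Z(\gamma U)_{\Gamma'}\hookrightarrow S_{\Gamma'}$, $Z(\pr_0(\gamma U))_{\Gamma'_0}\hookrightarrow S^0_{\Gamma'}$, and $Z(\pr_0(\gamma U))_{\Gamma'_0}\hookrightarrow Z(\gamma U)_{\Gamma'}$ are regular embeddings, and the components of the fiber product intersect cleanly. Fulton's refined pullback formula (Proposition~6.1 of \cite{fulton.book}) applied to the regular embedding $\rho_{\Gamma'}:S^0_{\Gamma'}\hookrightarrow S_{\Gamma'}$ then gives, for each $\gamma$, a contribution $c_{e_\gamma}(E_\gamma)\cap [Z(\pr_0(\gamma U))_{\Gamma'_0}]$, where $E_\gamma$ is the excess bundle arising from the commutative square with corners $Z(\pr_0(\gamma U))_{\Gamma'_0}$, $Z(\gamma U)_{\Gamma'}$, $S^0_{\Gamma'}$, $S_{\Gamma'}$. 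A fiber-wise calculation at $z\in \Dp_{\pr_0(\gamma U)}$, entirely parallel to the argument preceding Proposition~\ref{prop-excess-bundle}, identifies
$$(E_\gamma)_z\ \cong\ \Hom_F(z,(\gamma U\cap U_0)_\R),$$
so $E_\gamma\simeq\boldCC_{\Gamma'_0}\otimes_F (\gamma U\cap U_0)$ has rank $d_+\cdot\dim(\gamma U\cap U_0)=d_+(r(U)-r(\pr_0(\gamma U)))$, and hence top Chern class $\cbold_{S^0}^{\,r(U)-r(\pr_0(\gamma U))}$ (after pullback). Here I use $\dim(\gamma U\cap U_0)=r(U)-\dim\pr_0(\gamma U)$, valid since $\gamma U\cap U_0=\ker(\pr_0|_{\gamma U})$.

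Finally I descend from $\Gamma'$ back to $\Gamma$ exactly as in the closing paragraph of the proof of Theorem~\ref{theo4.12}: I apply $(\pr_{S^0})_*$, invoke the projection formula together with $(\pr_{S^0})^*\cbold_{S^0,\Gamma_0}=\cbold_{S^0,\Gamma'_0}$, observe that the $\Gamma'$-orbits in $\Gamma/\Gamma_U$ refine the $\Gamma_0$-orbits, and cancel the $[\Gamma_0:\Gamma'_0]$ index factor from both sides. The main technical obstacle I anticipate is the excess-bundle identification after passing through the projection $\pr_0$: one must verify, with the same $F$-linear care as in Lemma~\ref{lem-normal-bund} and Proposition~\ref{prop-excess-bundle}, that the quotient of $\rho^*N_{S^0_{\Gamma'}}(S_{\Gamma'})\simeq\boldCC\otimes_F U_0$ by $N_{Z(\pr_0(\gamma U))_{\Gamma'_0}}(Z(\gamma U)_{\Gamma'})$ is canonically $\boldCC\otimes_F(\gamma U\cap U_0)$. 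Concretely, the inclusion $N_{Z(\pr_0(\gamma U))_{\Gamma'_0}}(Z(\gamma U)_{\Gamma'})\hookrightarrow \rho^*N_{S^0_{\Gamma'}}(S_{\Gamma'})$ must be recognized, via orthogonal projection, as the subbundle $\boldCC\otimes_F \pr_{\gamma U}(U_0)$, where $\pr_{\gamma U}$ projects to $(\gamma U)^\perp$; once this is in hand, the quotient splits off the intersection summand $\gamma U\cap U_0$ as desired.
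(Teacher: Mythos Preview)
Your approach is exactly what the paper intends: it omits the proof, stating only that it is ``analogous to that of Theorem~\ref{theo4.12}'', and your plan carries out precisely this analogy, with $(U_1,U_2)=(U_0,\gamma U)$ in the excess-bundle machinery of Propositions~\ref{inter-prop-1} and~\ref{prop-excess-bundle}. One small correction to your final paragraph: the subbundle $N'$ is not naturally $\boldCC\otimes_F\pr_{\gamma U}(U_0)$ (that space does not sit inside $U_0$); rather, you should invoke Proposition~\ref{prop-excess-bundle} directly, which already identifies the quotient $E_\gamma$ with $\boldCC\otimes_F(U_0\cap\gamma U)$ without needing to name the subbundle explicitly.
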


\subsection{The ad\`elic version} For a totally positive subspace $U_0$ in $V$, let $G^0 = R_{F/\Q}\text{\rm GSpin}(U_0^\perp)$ s
o that $G^0\subset G = R_{F/\Q}\text{\rm GSpin}(V)$. 
For a compact open subgroup $K\subset G(\A_f)$ and $g\in G(\A_f)$, let $K^0_g = G^0(\A_f)\cap g Kg^{-1}$ and let 
$$\rho_{g,K}: S^0_{K^0_g} = G^0(\Q)\back D_0\times G^0(\A_f)/K^0_g\lra S_K = G(\Q)\back D\times G(\A_f)/K, \qquad [z,h] \mapsto [z,hg],$$
be the natural morphism. Suppose that $K$ is neat. The corresponding pullback homomorphisms
$$\rho^*_{g,K}: \ch^\bullet(S_K) \lra \ch^\bullet(S^0_{K^0_g})$$ 
are compatible with the systems of projections (\ref{dir-lim}) and define a homomorphism 
$$\rho^*_g: \ch^\bullet(S) \lra \ch^\bullet(S^0)$$
on the direct limits.

\begin{prop}\label{prop-6.2}  For a Schwartz function $\ph\in S(V(\A_f)^n)^K$ and $T\in \Sym_n(F)$ totally positive semi-definite, 
$$\rho^*_{g,K}  (Z(T,\ph,K)) =  \sum_r \sum_{\substack{x_0\in U_0(F)^n}}  \ph^0_r(x_0)\, Z(T-Q(x_0),\ph_r^1,K^0_g).$$
where
\beq\label{ph-decomp}
\o(g) \ph = \sum_r \ph_r^0 \tt \ph_r^{1} \in S(U_0(\A_f))\tt S(U_0^\perp(\A_f)).
\eeq
\end{prop}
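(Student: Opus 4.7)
The plan is to reduce to the classical pullback formula (Proposition~\ref{pullback-classical}) applied component-by-component, then reshape the resulting double sum into the product form on the right by a change of summation variable. Using the double coset decompositions $G(\A_f) = \bigsqcup_j G(\Q)_+ g_j K$ and $G^0(\A_f) = \bigsqcup_i G^0(\Q)_+ h_i K^0_g$, the restriction of $\rho_{g,K}$ to the $i$-th component of $S^0_{K^0_g}$ factors through the classical inclusion $\Gamma^0_{h_i}\back \Dp_0 \hookrightarrow \Gamma_{h_i g}\back \Dp$, where $\Gamma^0_{h_i} = G^0(\Q)_+ \cap \Gamma_{h_i g}$; one may arrange that each $[h_i g]$ appears among the chosen $[g_j]$.

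On each such pair of components I apply Proposition~\ref{pullback-classical}, passing first to a finite index subgroup as in Proposition~\ref{prop-good-cover} to ensure the hypothesis $\s_{U_0}\Gamma_{h_i g}\s_{U_0}=\Gamma_{h_i g}$. Since $\pr_0(\gamma U(x)) = U(\pr_0(\gamma x))$, and combining the exponent $r(x)-r(\pr_0(\gamma x))$ from the classical formula with the factor $\cbold^{n-r(x)}$ coming from the definition of $Z(T,\ph,K)$ yields total exponent $n - r(\pr_0(\gamma x))$, the pullback contribution on the $h_i$-component reads
\begin{equation*}
\sum_{\substack{ x\in \O_T(F)\\ \snass \mod \Gamma_{h_i g}}} \ph((h_ig)^{-1}x) \sum_{\gamma\in \Gamma^0_{h_i}\back \Gamma_{h_i g}/\Gamma_{h_i g,U(x)}} [Z(U(\pr_0(\gamma x)))_{\Gamma^0_{h_i}}] \cap \cbold_{\Gamma^0_{h_i}}^{n-r(\pr_0(\gamma x))}.
\end{equation*}

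Substituting $y = \gamma x$ collapses this into a single sum. The key point is that the pointwise stabilizer $\Gamma_{h_i g,U(x)}$ coincides with the stabilizer of the tuple $x$ itself, since the components of $x$ span $U(x)$; hence the double sum over $x\mod \Gamma_{h_i g}$ and $\gamma \in \Gamma^0_{h_i}\back \Gamma_{h_i g}/\Gamma_{h_i g,U(x)}$ identifies bijectively with a single sum over $y \in \O_T(F)\mod \Gamma^0_{h_i}$. Moreover, since $\gamma\in \Gamma_{h_i g}\subset (h_i g) K (h_i g)^{-1}$ and $\ph$ is $K$-invariant, the Schwartz weight $\ph((h_i g)^{-1}x)$ becomes $\ph((h_i g)^{-1}y)$. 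Decomposing $y = y_0+y_1$ with $y_0\in U_0(F)^n$ and $y_1\in U_0^\perp(F)^n$, we have $\pr_0(y)=y_1$ and $Q(y_1)= T - Q(y_0)$. Since $\Gamma^0_{h_i}\subset G^0$ acts trivially on $U_0$, the sum over $y \mod \Gamma^0_{h_i}$ unfolds as an unrestricted sum over $y_0$ together with a sum over $y_1 \mod \Gamma^0_{h_i}$. The given Schwartz decomposition and the fact that $h_i$ fixes $U_0$ pointwise yield
\begin{equation*}
\ph((h_i g)^{-1}(y_0+y_1)) = \bigl(\o(h_i)\o(g)\ph\bigr)(y_0+y_1) = \sum_r \ph^0_r(y_0)\,\ph^1_r(h_i^{-1}y_1),
\end{equation*}
and the inner $y_1$-sum is precisely the $h_i$-component contribution to $Z(T-Q(y_0),\ph^1_r,K^0_g)$. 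Summing over $i$ gives the claimed identity.

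The main obstacle lies in the bookkeeping of the change of variable $y=\gamma x$: one must verify the bijection of index sets carefully (in particular the identification $\Gamma_{h_i g,U(x)}=\mathrm{Stab}_{\Gamma_{h_i g}}(x)$) and confirm that the descent from the finite cover used to invoke Proposition~\ref{pullback-classical} is compatible with the weighted-cycle sums, along the lines of the final step of the proof of Theorem~\ref{theo4.12}. Everything else is a mechanical unwinding of the definitions.
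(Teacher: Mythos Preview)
Your proposal is correct and follows essentially the same approach as the paper: reduce to the classical pullback formula (Proposition~\ref{pullback-classical}) on each connected component, collapse the double sum over $x\!\!\mod \Gamma_{h_i g}$ and $\gamma\in \Gamma^0_{h_i}\back \Gamma_{h_i g}/\Gamma_{h_i g,U(x)}$ into a single sum over $y=\gamma x\!\!\mod \Gamma^0_{h_i}$, and then split $y=y_0+y_1$ using the Schwartz decomposition and the fact that $h_i$ fixes $U_0$. The paper carries along the auxiliary elements $\gamma_i$ (writing $h_i g = \gamma_i^{-1} g_j k_i$ and factoring $\rho_i = [\gamma_i]\circ\rho_i^\nat$) rather than absorbing them into the choice of coset representatives as you do, and it leaves the collapse step and the identification $\Gamma_{h_i g,U(x)}=\mathrm{Stab}_{\Gamma_{h_i g}}(x)$ implicit, but the argument is the same.
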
 

\begin{proof}  The description of $\rho_{g,K}$ on connected components is given in Section 4 of \cite{K.duke}. 
Write
$$G(\A_f) = \coprod_j G(\Q)_+ g_j K,$$
and 
$$G^0(\A_f) = \coprod_i G^0(\Q)_+ h_i K^0_g,$$
and, for each $i$, write
$$G(\Q)_+ h_i g K = G(\Q)_+ g_j K,\qquad j = j(i), \quad h_i g = \gamma_i^{-1} g_j k_i.$$
Here the index $j=j(i)$ depends on $i$. 
Let 
\beq\label{match-ind}
\Gamma^0_i = G^0(\Q)_+\cap h_i K^0_g h_i^{-1} = G^0(\Q)_+ \cap h_i g K g^{-1} h_i^{-1} = G^0(\Q)_+\cap \gamma_i^{-1}g_j K g_j^{-1} \gamma_i,
\eeq
and $\Gamma_j = \Gamma_{g_j}=G(\Q)_+\cap g_j K g_j^{-1}$. Let 
$$\pi^0_i: D^{0,+} \lra \Gamma^0_i\back D^{0,+}\qquad\text{and}\qquad \pi_j: \Dp\lra \Gamma_j\back \Dp$$
be the projections.  Here note that $\Gamma^0_i$ is a subgroup of $\gamma_i^{-1} \Gamma_j \gamma_i$. Then
$$\rho_{g,K}:\ \coprod_i \Gamma^0_i\back D^{0,+} \simeq S^0_{K^0_g} \ \lra \ S_K \simeq 
\coprod_j \Gamma_j \back \Dp,\qquad \rho_i: \pi^0_i(z)\  \mapsto \ \pi_j(\gamma_i z),$$
where $j=j(i)$ as in (\ref{match-ind}) and $\rho_i: \Gamma_i^0\back D^{0,+}\rightarrow \Gamma_j\back D^+$ is the restriction of $\rho_{g,K}$ 
to the component $\Gamma_i^0\back D^{0,+}$. 
For $\ph\in S(V(\A_f)^n)^K$, the part of the special cycle $Z(T,\ph, K)$ lying in the connected component $S_{\Gamma_j}=\Gamma_j\back \Dp$ is given by 
\beq\label{j-part}
\sum_{\substack{x\in \O_T(F) \\ \snass \mod \Gamma_{j}}} \ph(g_j^{-1} x) \, [\,Z(U(x))_{\Gamma_{j}}\,] \cap \cbold_{\Gamma_{g_j}}^{n-r(x)}.
\eeq
For a fixed $i$ and taking $j=j(i)$, write $\rho_i = [\gamma_i] \circ \rho_i^\nat$ where $[\gamma_i]: S_{\gamma_i^{-1} \Gamma_j \gamma_i} 
\isoarrow S_{\Gamma_j}$ is the isomorphism 
associated to $\gamma_i\in G(\Q)_+$, as in (\ref{brak-eta}).   For convenience, we write $\Gamma_j' = \gamma_i^{-1} \Gamma_j \gamma_i$. 
By Proposition~\ref{pullback-classical}, the pullback of (\ref{j-part}) under $\rho_i$ is 
\begin{align*}
&\sum_{\substack{x\in \O_T(F) \\ \snass \mod \Gamma_{j}}} \ph(g_j^{-1} x) \, \rho_i^*\big(\ [\,Z(U(x))_{\Gamma_{j}}\,] \cap \cbold_{\Gamma_{j}}^{n-r(x)}\ \big)\\
\nass
{}&=\sum_{\substack{x\in \O_T(F) \\ \snass \mod \Gamma_{j}}} \ph(g_j^{-1} x) \, (\rho_i^\nat)^*\big(\ [\,Z(\gamma_i^{-1} U(x))_{\Gamma'_{j}}\,] \cap 
\cbold_{\Gamma'_{j}}^{n-r(x)}\ \big)\\
\nass
{}&=\sum_{\substack{x\in \O_T(F) \\ \snass \mod \Gamma'_{j}}} \ph(g_j^{-1} \gamma_i x) \, (\rho_i^\nat)^*\big(\ [\,Z(U(x))_{\Gamma'_{j}}\,] \cap 
\cbold_{\Gamma'_{j}}^{n-r(x)}\ \big)\\
\nass
{}&=\sum_{\substack{x\in \O_T(F) \\ \snass \mod \Gamma'_{j}}} \ph(g_j^{-1}\gamma_i x) \, \sum_{\gamma\in \Gamma^0_i\back \Gamma'_j/\Gamma'_{j,U(x)}} 
[\,Z(\pr_0(\gamma U(x)))_{\Gamma^0_{i}}\,] \cap \cbold_{\Gamma^0_i}^{r(U(x))-r(\pr_0(\gamma U(x)))}\cap \cbold_{\Gamma^0_{i}}^{n-r(x)}\\
\nass
{}&=\sum_{\substack{x\in \O_T(F) \\ \snass \mod \Gamma^0_{i}}} \ph(g_j^{-1}\gamma_i x) \,
[\,Z(\pr_0(U(x)))_{\Gamma^0_{i}}\,] \cap \cbold_{\Gamma^0_i}^{n-r(\pr_0(U(x)))}\\
\nass
{}&=\sum_{\substack{x_0\in U_0(F)^n,\ x_1\in U_0^\perp(F)^n\\ T = Q(x_0)+Q(x_1) \\ \snass \mod \Gamma^0_{i}}} \ph(g^{-1} (x_0+h_i^{-1}x_1))\, 
[\,Z(U(x_1))_{\Gamma^0_{i}}\,] \cap \cbold_{\Gamma^0_i}^{n-r(x_1)}.
\end{align*}
Here in the last line we note that $g_j \gamma_i = k_i g^{-1} h_i^{-1}$ and that $h_i$ acts trivially on $U_0$. Using (\ref{ph-decomp}), 
the sum on $i$ of the last expression is 
\begin{align*}
&\sum_i \sum_r\sum_{\substack{x_0\in U_0(F)^n,\ x_1\in U_0^\perp(F)^n\\ T = Q(x_0)+Q(x_1) \\ \snass \mod \Gamma^0_{i}}} \ph^0_r(x_0)\ph^1_r(h_i^{-1}x_1))\,
[\,Z(U(x_1))_{\Gamma^0_{i}}\,] \cap \cbold_{\Gamma^0_i}^{n-r(x_1)}\\
\nass
{}&=\sum_r\sum_{\substack{x_0\in U_0(F)^n}}  \ph^0_r(x_0) \sum_i
\sum_{\substack{x_1\in U_0^\perp(F)^n\\ Q(x_1)= T-Q(x_0) \\ \snass \mod \Gamma^0_{i}}} 
\ph^1_r(h_i^{-1}x_1))\,
[\,Z(U(x_1))_{\Gamma^0_{i}}\,] \cap \cbold_{\Gamma^0_i}^{n-r(x_1)}\\
\nass
{}&= \sum_r \sum_{\substack{x_0\in U_0(F)^n}}  \ph^0_r(x_0)\,[\, Z(T-Q(x_0),\ph_r^1,K^0_g)\,].
\end{align*}
\end{proof}

\subsection{The pullback of the generating series}
Applying Proposition~6.2, we obtain a formula for the pullback of the generating series. 
\begin{prop}\label{main-pullback}  With the notation of the previous section, suppose that $\ph\in S(V(\A_f)^n)^K$ satisfies (\ref{ph-decomp}). 
Then 
$$\rho_{g,K}^*\big(\ \phi_n(\tau;\ph,K)\ \big) = \sum_r \theta(\tau,\ph^0_r)\cdot \phi_n(\tau, \ph^1_r,K^0_g)$$
where 
$$\theta(\tau,\ph^0_r) = \sum_{x_0\in U_0(F)^n} \ph^0_r(x_0) \, \qq^{Q(x_0)}$$
and 
$$\ph_n(\tau, \ph^1_r,K^0_g) \ \in \ch^{nd_+}(S^0_{K^0_g})[[\qq]]$$ 
is the formal generating series for $S^0_{K^0_g}$. 
\end{prop}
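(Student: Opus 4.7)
The plan is to apply Proposition~\ref{prop-6.2} term-by-term to the defining $\qq$-expansion of $\phi_n(\tau;\ph,K)$ and then reorganize the resulting triple sum to factor out a theta series and a pulled-back generating series.

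First, I would expand
$$\rho_{g,K}^*\bigl(\phi_n(\tau;\ph,K)\bigr)\;=\;\sum_{T\in\Sym_n(F)_{\ge 0}} \rho_{g,K}^*\bigl([Z(T,\ph,K)]\bigr)\,\qq^T,$$
and substitute the identity furnished by Proposition~\ref{prop-6.2}, which gives
$$\sum_{T\ge 0}\;\sum_r\;\sum_{x_0\in U_0(F)^n}\ph^0_r(x_0)\,[Z(T-Q(x_0),\ph^1_r,K^0_g)]\,\qq^T.$$
Note that this is a well-defined formal series because, for each fixed $T$, only finitely many $x_0$ contribute: the Schwartz function $\ph^0_r$ has compact support, $U_0$ is totally positive definite so $Q(x_0)\ge 0$, and $Z(T',\ph^1_r,K^0_g)$ vanishes unless $T'\in\Sym_n(F)_{\ge 0}$, by the remarks surrounding the definition of the weighted cycles.

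Second, I would change variables $T'=T-Q(x_0)$ and use the multiplicativity $\qq^{T}=\qq^{Q(x_0)}\cdot \qq^{T'}$ of the exponential to rewrite the triple sum as
$$\sum_r\;\Bigl(\sum_{x_0\in U_0(F)^n}\ph^0_r(x_0)\,\qq^{Q(x_0)}\Bigr)\,\Bigl(\sum_{T'\in\Sym_n(F)_{\ge 0}}[Z(T',\ph^1_r,K^0_g)]\,\qq^{T'}\Bigr).$$
The only thing to check is that the change of variables is a bijection on the effective index sets: since $Q(x_0)\ge 0$, the pair $(x_0,T')$ with $T'\ge 0$ indeed corresponds to $T=T'+Q(x_0)\ge 0$, and conversely only pairs with $T'\ge 0$ contribute on the original side.

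Finally, I would identify the inner factors as $\theta(\tau,\ph^0_r)$ and $\phi_n(\tau,\ph^1_r,K^0_g)$ respectively, obtaining the asserted product formula. The substantive ingredient is Proposition~\ref{prop-6.2}; the rest is a formal rearrangement of power series. The only potential obstacle is keeping track of the semi-definiteness conventions during the change of variables, which is handled at once by noting that $U_0$ is totally positive definite so $Q$ takes values in $\Sym_n(F)_{\ge 0}$.
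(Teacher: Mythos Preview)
Your proposal is correct and is precisely the argument the paper has in mind: the proposition is stated immediately after the phrase ``Applying Proposition~\ref{prop-6.2}, we obtain a formula for the pullback of the generating series,'' with no further proof given, so the term-by-term substitution followed by the change of variables $T'=T-Q(x_0)$ and the factorization $\qq^T=\qq^{Q(x_0)}\qq^{T'}$ is exactly the intended derivation. Your remarks on why only finitely many $x_0$ contribute for fixed $T$ (compact support of $\ph^0_r$ at the finite places together with the archimedean bound coming from $0\le Q(x_0)\le T$ in the totally positive definite space $U_0$) and on why the reindexing is a bijection on the effective index sets are the only points that require any care, and you have handled them.
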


Note that the decomposition (\ref{ph-decomp}) depends on $g$. 

\section{The  embedding trick}\label{section8}
We now slightly vary the situation of Section~\ref{section6}. Let $U_0$ be a totally positive definite space over $F$ of dimension $4\ell$
and let $\widetilde{V} = U_0\oplus V$ be the orthogonal sum. The signature of $\widetilde{V}$ is given by (\ref{sig-embed}). 
Let $\widetilde{G} = R_{F/\Q} \text{GSpin}(\widetilde{V})$ so that there is a natural homomorphism $G\ra \widetilde{G}$.  
For $\widetilde{K}$ compact open in $\widetilde{G}(\A_f)$ and $K = G(\A_f)\cap \widetilde{K}$, we obtain a morphism 
$$\rho_{\widetilde{K}}: S_K \lra \widetilde{S}_{\widetilde{K}},$$
and, assuming that $\widetilde{K}$ is neat so that these are smooth varieties,  a ring homomorphism 
$$\rho_{\widetilde{K}}^*: \ch^\bullet(\widetilde{S}_{\widetilde{K}}) \lra \ch^\bullet(S_K).$$
Passing to the limit over $\widetilde{K}$, we also have 
$$\rho^*: \ch^\bullet(\widetilde{S}) \lra \ch^\bullet(S).$$

For $\ph\in S(V(\A_f)^n)$ and $\ph^0\in S(U_0(\A_f)^n)$, Proposition~\ref{main-pullback} yields 
\beq\label{embed-step1} 
\rho^*\big(\ \phi_n(\tau;\ph^0\tt \ph)\ \big) =  \theta(\tau,\ph^0)\cdot \phi_n(\tau, \ph).
\eeq
Thus the two formal generating series are related by multiplication by a holomorphic Hilbert-Siegel theta series. 
The following result will be proved in the next section. 

\begin{prop}\label{prop7.1}  Suppose that for all choices of $\ph^0\in S(U_0(\A_f)^n)$ the series $\phi_n(\tau;\ph^0\tt \ph)$ is modular. 
Then the series $\phi_n(\tau, \ph)$ is modular. 
\end{prop}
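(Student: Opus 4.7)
The plan is to pair the pullback identity (\ref{embed-step1}) with an arbitrary linear functional on $\ch^{nd_+}(S)$ and recover $\phi_n(\tau,\varphi)$ by division by a theta series, using the normality of the Baily-Borel compactification of the Hilbert-Siegel modular variety to upgrade the resulting meromorphic modular form to a holomorphic one.

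Fix a $\C$-linear functional $\l$ on $\ch^{nd_+}(S)$ and set $F(\tau):=\l(\phi_n(\tau,\varphi))$, a formal $\qq$-series indexed by $T\in\Sym_n(F)_{\ge 0}$. Since $\l\circ\rho^*$ is a linear functional on $\ch^{nd_+}(\tilde S)$, the hypothesis gives that
$$G_{\varphi^0}(\tau)\,:=\,(\l\circ\rho^*)\bigl(\phi_n(\tau;\varphi^0\tt\varphi)\bigr)$$
is a holomorphic Hilbert-Siegel modular form of weight $\kappa+2\ell$ for every $\varphi^0\in S(U_0(\A_f)^n)$, where $\kappa=m/2+1$. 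Applying $\l$ to the pullback identity (\ref{embed-step1}) yields the formal identity $G_{\varphi^0}(\tau)=\theta(\tau,\varphi^0)\,F(\tau)$.

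Next, I would specialize $\varphi^0$ to the characteristic function of $(L_{U_0})^n$, so that $\theta(\tau,\varphi^0)$ is a holomorphic modular form of weight $2\ell$ with constant $\qq$-coefficient $1$; in particular it is a unit in $\C[[\qq]]$ and its formal reciprocal recovers $F$ from $G_{\varphi^0}$ coefficient by coefficient. On the Zariski-open locus of $\H_n^d$ where $\theta(\tau,\varphi^0)\ne 0$ we thus have $F(\tau)=G_{\varphi^0}(\tau)/\theta(\tau,\varphi^0)$ as a convergent holomorphic function whose $\qq$-expansion matches the given formal series, so $F$ converges termwise absolutely and extends to a meromorphic function on $\H_n^d$. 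Because $G_{\varphi^0}$ and $\theta(\tau,\varphi^0)$ transform under the metaplectic cover of $R_{F/\Q}\Sp(n)$ by the known rules of weights $\kappa+2\ell$ and $2\ell$ respectively, the resulting meromorphic $F$ automatically satisfies the weight-$\kappa$ transformation law.

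The main obstacle is upgrading $F$ from meromorphic to \emph{holomorphic}, and this is the technical heart of the argument attributed in the introduction to Bruinier. The plan is to view $F$ as a meromorphic section of the automorphic line bundle of weight $\kappa$ on the Baily-Borel compactification $\overline{\mathcal M}$ of the Hilbert-Siegel modular variety. Two inputs combine: (i) $\overline{\mathcal M}$ is normal, where the Hilbert-Siegel case requires the results of Kn\"oller, \cite{knoeller}, beyond the classical Baily-Borel theorem; and (ii) the $\qq$-expansion of $F$ is supported on $T\in\Sym_n(F)_{\ge 0}$, so $F$ is regular at the cusp $\infty$. Normality forces any pole of the meromorphic section to have pure codimension one support, and by varying $\varphi^0$ over a sufficiently rich family of Schwartz functions one sees that no interior point lies in the common zero set of all such theta series: at any fixed interior point one can exhibit $\varphi^0$ whose theta series does not vanish there. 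Combined with a Koecher-type argument at the boundary cusps, this forces $F$ to be everywhere holomorphic and finite at the cusps, completing the verification that $F$ is a Hilbert-Siegel modular form of weight $\kappa$. Since $\l$ was arbitrary, $\phi_n(\tau,\varphi)$ is modular.
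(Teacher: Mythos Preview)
Your core argument is correct and, in one respect, cleaner than the paper's: by choosing $\varphi^0$ to be the characteristic function of $(L_{U_0})^n$, you arrange that $\theta(\tau,\varphi^0)$ has constant term $1$ and is therefore a \emph{unit} in the ring of formal Fourier series supported on $\Sym_n(F)_{\ge0}$. This immediately gives $F=\theta^{-1}G_{\varphi^0}$ as a formal series, and since $1/\theta$ is holomorphic near the cusp with Fourier expansion equal to the formal inverse, $F$ converges there to $G_{\varphi^0}/\theta$. The identity $G_{\varphi^0_z}=\theta(\cdot,\varphi^0_z)\cdot F$ then holds as functions near the cusp and hence, by analytic continuation, on all of $\H_n^d$; choosing $\varphi^0_z$ with $\theta(z,\varphi^0_z)\ne0$ shows $F$ is holomorphic at $z$. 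Koecher finishes the job.

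The paper proceeds differently. Rather than exploit a unit, it proves a general criterion (Proposition~\ref{prop8.2}) valid for \emph{any} $g$, not just units: given $\varphi(f)=\varphi(g)\cdot c$ in the ring $\FFSS_\L$ of symmetric formal Fourier series, together with pointwise non-vanishing data, one concludes $c$ is the expansion of a holomorphic form. For this the paper must know that $\FFSS_\L$ is an integral domain (Proposition~\ref{prop8.1}), so that it has a fraction field in which $f/g=f_z/g_z$ can be compared. \emph{That} is where Kn\"oller enters: his result identifies $\FFSS_\L$ with the $\mathfrak m$-adic completion of the local ring at the point cusp of the Baily--Borel compactification, which is normal and hence a domain.

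So your invocation of normality and Kn\"oller is misplaced. You cite them for ``poles have pure codimension one,'' but on the smooth manifold $\H_n^d$ that is automatic, and your argument never actually leaves $\H_n^d$. The paper needs Kn\"oller for a subtler algebraic reason (integrality of the completed local ring, equivalently of the formal series ring) that your unit trick sidesteps entirely. Your route is genuinely simpler for this particular application; the paper's route yields a reusable criterion that does not depend on having a theta series with nonzero constant term at hand.
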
  

As explained in \cite{YZZ}, we have the following non-vanishing result. 
\begin{lem} For any point $\tau_0\in \H_n^d$, there exists a function $\ph^0\in S(U^0(\A_f)^n)$ such that 
$\theta(\tau_0,\ph^0)\ne0$. 
\end{lem}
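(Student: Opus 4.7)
The plan is to exhibit $\ph^0$ as the characteristic function of a sufficiently scaled $O_F$-lattice in $U_0(\A_f)^n$, so that $\theta(\tau_0,\ph^0)$ reduces to a classical theta series of a positive-definite lattice which is forced, by scaling, to approach its constant term $1$ at the fixed point $\tau_0$.

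First I will fix any $O_F$-lattice $L_0\subset U_0$ on which $Q$ is $O_F$-valued. For each positive integer $N$, let $\widehat{NL_0}=(NL_0)\otimes_{O_F}\widehat{O}_F\subset U_0(\A_f)$, and set
$$\ph^0_N \;=\; \mathrm{char}\bigl(\,(\widehat{NL_0})^n\,\bigr) \;\in\; S(U_0(\A_f)^n).$$
The elements of $U_0(F)^n$ whose diagonal image lies in the support of $\ph^0_N$ are exactly those of $(NL_0)^n$, since $NL_0 = U_0(F)\cap \widehat{NL_0}$. Substituting $x = Ny$ then yields
$$\theta(\tau_0,\ph^0_N) \;=\; \sum_{y\in L_0^n} e\!\Bigl(N^{2}\sum_{j=1}^{d}\tr(\sigma_j(Q(y))\,\tau_{0,j})\Bigr).$$
Writing $\tau_{0,j}=u_{0,j}+iv_{0,j}$ with $v_{0,j}\in\Sym_n(\R)_{>0}$, the $y=0$ term equals $1$ while every other term has modulus $\exp\!\bigl(-2\pi N^{2}\sum_j\tr(\sigma_j(Q(y))\,v_{0,j})\bigr)$.

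Because $U_0$ is totally positive definite, for any nonzero $y\in U_0(F)^n$ each $\sigma_j(Q(y))$ is a nonzero positive semi-definite symmetric matrix, so $\sum_j\tr(\sigma_j(Q(y))\,v_{0,j})\ge c\,\|y\|^{2}$ for some constant $c>0$ depending only on $L_0$ and $\tau_0$. Combined with the usual polynomial bound on lattice points of bounded norm this yields
$$\bigl|\theta(\tau_0,\ph^0_N)-1\bigr| \;\le\; \sum_{0\neq y\in L_0^n} e^{-2\pi N^{2}c\,\|y\|^{2}} \;\xrightarrow[N\to\infty]{}\; 0,$$
so that $\theta(\tau_0,\ph^0_N)\neq 0$ for all sufficiently large $N$, producing the required $\ph^0$. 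The only inputs are the total positive-definiteness of $Q$ on $U_0$ and the positive-definiteness of the imaginary parts of the $\tau_{0,j}$, so I do not foresee any substantive obstacle; the statement is a direct adelic reformulation of the classical fact that the theta series of a positive-definite lattice tends to its constant term as the lattice is inflated.
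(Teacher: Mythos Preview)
Your proof is correct and takes a genuinely different route from the paper's argument. The paper argues by contradiction: if the linear functional $\ph^0\mapsto\theta(\tau_0,\ph^0)$ were identically zero, then, using left $G'(\Q)$-invariance and the fact that right translation by $G'(\A_f)$ acts through the Weil representation on $\ph^0$, the adelic theta function $\theta(g',\ph^0)$ would vanish on $G'(\Q)\,g'_{\tau_0}\,G'(\A_f)$; this set is dense in $G'(\A)$ (real approximation for the symplectic group), forcing $\theta(g',\ph^0)\equiv 0$ for all $\ph^0$, which is absurd.

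Your approach is instead direct and elementary: you explicitly produce a $\ph^0$ by inflating a lattice so that the theta series is forced close to its constant term $1$. This avoids any appeal to automorphy, the Weil representation, or approximation theorems, and uses only the total positive-definiteness of $U_0$ and the positivity of $\Im\tau_0$. The paper's argument is slicker and would transplant to more general theta-series settings (e.g.\ where an explicit lattice estimate is unavailable), while yours is more self-contained and gives a concrete $\ph^0$ rather than mere existence. Both are adequate here; your version has the advantage of being independent of the adelic machinery developed elsewhere in the paper.
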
 
\begin{proof} 
Suppose that the linear functional 
$$\ph^0\mapsto \theta(\tau_0,\ph^0) = N(\det v_0)^{-\ell}\,\theta(g'_{\tau_0}, \ph^0), \qquad N(\det v_0) = \prod_{j=1}^d \det(v_{0,j}),$$ 
on $S(U^0(\A_f)^n)$ is zero. Then, for all $\ph^0$,  the function $\theta(g',\ph^0)$ on $G'(\A)$ vanishes on the subset $G'(\Q)g'_{\tau_0} G'(\A_f)$. 
But this set is dense in $G'(\A)$ and the functions $\theta(g',\ph^0)$ are not all zero.  
\end{proof}

\section{Formal Fourier series}\label{sectionFFS}
 
In this section, we prove Proposition~\ref{prop7.1}. 
The argument, using formal Fourier series and a special case of a result of \cite{knoeller},  was provided by Jan Bruinier.

Let 
$\mathcal S_F = \Sym_n(O_F)$ and let 
\begin{align*}
\mathcal S_F^\vee &= \{ x\in \Sym_n(F)\mid \tr_{F/\Q}\tr(xy) \in \Z, \ \forall y\in \mathcal S_F\ \}=  \Sym_n(\Z)^\vee \tt_\Z\d_F^{-1}.
\end{align*}
Let $\mathcal C$ be the cone of totally positive definite elements in $\Sym_n(\R)^d$ so that 
$$\mathcal S_F^\vee \cap \bar{\mathcal C} =\{ \ x\in \mathcal S_F^\vee\mid \s_j(x) \ge 0, \text{for all $j$}\}.$$

For a subgroup $\Gamma\subset \Sp_n(F)$ commensurable with $\Sp_n(O_F)$, there is a positive integer $\nu\in \Z_{>0}$ such that 
$\Gamma$ contains the principal congruence subgroup $\Gamma(\nu)$ of $\Sp_n(O_F)$.  It will be sufficient to consider the case
$\Gamma= \Gamma(\nu)$. We will assume that $\nu\ge 3$ to eliminate sign issues. 

Let $N= N_P$ (resp. $M=M_P$) be the unipotent 
radical  (resp. the standard Levi factor) of the Siegel parabolic $P$ of $\Sp_n/F$ and let 
\beq\label{unipotents}
\Gamma_N  = \Gamma\cap N(F)=\{ \ n(\b) \mid \b \in \nu\cdot \mathcal S_F\}, \qquad  n(\b) =\bpm 1_n&\b\\{}&1_n\epm,
\eeq
and 
$$\Gamma_M=\Gamma\cap M(F) = \{ \ m(\e)\mid \e\in \GL_n(O_F), \e\equiv 1_n\!\! \mod \nu O_F\ \}, \qquad m(\e) = \bpm \e&{}\\{}&{}^t\e^{-1}\epm.$$
We write 
$\L=\{\ \e\in \GL_n(O_F), \e\equiv 1_n\!\! \mod \nu O_F\ \}$. 

For $k\in \Z_{\ge0}$, let $M_k(\Gamma)$ be the vector space of Hilbert-Siegel modular forms of parallel 
weight\footnote{The case of half-integral weight can be formulated in exactly the same way using the metaplectic group. We leave this to the reader. }
 $k$ 
with respect to $\Gamma$. The graded ring $M_*(\Gamma) = \oplus_{k\ge 0} M_k(\Gamma)$ is an integral domain. We write $Q(M_*(\Gamma))$ for 
its quotient field. It can be viewed as a subfield of the field of meromorphic functions on $\Gamma_N\back \H_n^d$. 
A function $f\in M_k(\Gamma)$ has a Fourier expansion of the form
\beq\label{F-exp-f}
f(\tau) = a_f(0)+\sum_{T\in \sfs } a_f(T) \, \qq^T,
\eeq
where $\qq^T$ is given by (\ref{def-qT}) and where, to lighten notation, we write 
\beq\label{L-dot}
\sfs =( \nu^{-1} \cdot \mathcal S_F^\vee \setminus \{0\}) \cap \bar{\mathcal C}.
\eeq
This set, which  is denoted by $L^{\bigcdot}$ in \cite{knoeller}, depends on $\nu$, although we we omit this dependence from the notation. 
Note that the Fourier series is `symmetric' with respect to $\L$, i.e., for all $\e\in \L$, 
\beq\label{def-symmetric}
a_f(\e\cdot T) = a_f(T), \quad \e\cdot T = \e\,T\,{}^t\e.
\eeq

A formal Fourier series over $F$ of genus $n$ is a function $a: \Sym_n(F) \ra \C$. It can be viewed as a formal Laurent series 
$$\sum_{T\in \Sym_n(F)} a(T) \, \qq^T.$$
Define the support of such a series as 
$$\text{\rm supp}(a) = \{ \ T\in \Sym_n(F)\mid a(T)\ne 0\ \}.$$

Let $\FFS$ be the complex vector space of all such formal series and let 
$$\FFSS =\{ \ a\in \FFS\mid \text{supp}(a) \subset \sfs\cup\{0\}\ \}. $$
Note that $\FFSS$ is a ring for the product defined by 
$$a\cdot b = c, \qquad c(T) = \sum_{\substack{ R\in \sfsnull \\ \snass T-R \in \sfsnull}} a(R) \, b(T-R).$$
The sum is finite since, for $v\in \Sym_n(\R)_{\ge0}$ the set $\{w\in \Sym_n(\R)\mid w\ge 0, \ v-w \ge0\}$ is compact 
and the image of $\sfsnull$ is discrete in $\Sym_n(\R)^d$. 

An element $a\in \FFS$ is symmetric with respect to $\L$ if it satisfies (\ref{def-symmetric}). We denote by 
$\FFSS_\L$ the subring of symmetric elements in $\FFSS$.  We postpone the proof of the following crucial fact. 
 
\begin{prop}\label{prop8.1} $\FFSS_\L$ is an integral domain.
\end{prop}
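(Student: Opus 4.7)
The plan is to prove the stronger claim that $\FFSS$ itself is an integral domain; $\FFSS_\L$, as a subring, then inherits the property. The argument is a standard leading-term technique for formal series supported in a convex cone.

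First I would fix an $\R$-linear functional $\lambda\colon\Sym_n(\R)^d\to\R$ that is strictly positive on $\bar{\mathcal C}\setminus\{0\}$, for instance $\lambda(T)=\sum_{j=1}^d\tr(\s_j(T))$. Since $\bar{\mathcal C}$ is a closed convex cone on which $\lambda$ vanishes only at $0$, every slab $\{T\in\bar{\mathcal C}\colon\lambda(T)\le c\}$ is compact; as $\sfsnull$ sits inside the lattice $\nu^{-1}\cdot\mathcal S_F^\vee$ in $\Sym_n(\R)^d$, each such slab meets $\sfsnull$ in a finite set. Consequently, for any nonzero $a\in\FFSS$ the value $\mu(a):=\min\{\lambda(T)\colon T\in\text{\rm supp}(a)\}$ is attained on a finite nonempty subset $F_a\subset\text{\rm supp}(a)$, and it makes sense to define the leading part $a_{\text{\rm lead}}:=\sum_{T\in F_a}a(T)\,\qq^T$.

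For two nonzero elements $a,b\in\FFSS$, I would then examine the coefficient of $c=a\cdot b$ at exponents $T$ with $\lambda(T)=\mu(a)+\mu(b)$. In the convolution $c(T)=\sum_R a(R)\,b(T-R)$ (already known to be finite because $\{R\in\bar{\mathcal C}\colon T-R\in\bar{\mathcal C}\}$ is compact and meets the lattice in finitely many points), any contributing pair forces $\lambda(R)\ge\mu(a)$ and $\lambda(T-R)\ge\mu(b)$, and equality in both is then forced. Thus the portion of $c$ at level $\mu(a)+\mu(b)$ coincides exactly with the product $a_{\text{\rm lead}}\cdot b_{\text{\rm lead}}$ taken in the group algebra $\C[(\Sym_n(F),+)]$. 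The supports $F_a,F_b$ being finite, this product lives in $\C[G]$ for some finitely generated subgroup $G\subset(\Sym_n(F),+)$; because $(\Sym_n(F),+)$ is torsion free, $G\cong\Z^k$ and $\C[G]$ is a Laurent polynomial ring, hence an integral domain. Therefore $a_{\text{\rm lead}}\cdot b_{\text{\rm lead}}\ne 0$, exhibiting a nonzero coefficient of $c$ and proving $ab\ne 0$.

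There is no really delicate step here: everything reduces to the cone structure of $\bar{\mathcal C}$, the fact that $\sfs$ lies in a lattice, and torsion freeness of $(\Sym_n(F),+)$. It is worth remarking that the $\L$-symmetry plays no role in the integrality argument; its role, via the results of Kn\"oller, will be to relate $\FFSS_\L$ to genuine Fourier expansions of Hilbert--Siegel modular forms, but the domain property is already visible on $\FFSS$.
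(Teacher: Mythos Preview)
Your argument is correct and proves the stronger fact that the full ring $\FFSS$ is an integral domain; the leading-term technique with a linear functional strictly positive on $\bar{\mathcal C}\setminus\{0\}$ is the natural one for formal series supported in a pointed lattice cone, and all steps go through as stated.

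The paper takes a genuinely different route. Rather than an elementary cone argument, it identifies $\FFSS_\L$ with the $\mmm$-adic completion $\hat R$ of the local ring $R=\mathcal O_{\xi_P}$ at the zero-dimensional cusp of the Baily-Borel compactification $\mathcal M_\Gamma^{BB}$: one interprets $R$ as convergent symmetric Fourier series, then invokes a result of Kn\"oller showing that the filtration $(I_k)$ (defined via an additive valuation $\l$ on $\sfs$) is cofinal with the $\mmm$-adic one, whence $\FFSS_\L=\varprojlim R/I_k=\hat R$. Normality of $R$ (from Baily--Borel) then gives normality of $\hat R$, hence the integral-domain property. Your proof is shorter, self-contained, and avoids the machinery of Baily--Borel normality and Kn\"oller's comparison of filtrations; on the other hand, the paper's argument yields the structural identification $\FFSS_\L\simeq\hat{\mathcal O}_{\xi_P}$, which is conceptually illuminating even if, for the application in Proposition~\ref{prop8.2}, only the bare integral-domain statement is used. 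Your closing remark is apt: the $\L$-symmetry is essential for the paper's geometric identification but plays no role in the domain property itself.
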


There is a natural injective ring homomorphism
\[
\ph:M_*(\Gamma)\to \FFSS_\L, \quad f\longmapsto a_f(0)+\sum_{T\in \sfs } a_f(T) \cdot \qq^T,
\]
taking a holomorphic modular form to its Fourier series.
It induces an inclusion of quotient fields 
\[
Q(\varphi): Q(M_*(\Gamma))\to Q(\FFSS_\L),
\]
such that the diagram 
\begin{align}
\label{cd}
\xymatrix{M_*(\Gamma) \ar[d]\ar^\varphi[r]& \FFSS_\L \ar[d]\\ Q(M_*(\Gamma))\ar^{Q(\varphi)}[r]&Q(\FFSS_\L)}
\end{align}
commutes.

\begin{prop}\label{prop8.2}
Let $c\in \FFSS_\L$. Suppose the following conditions are satisfied.  
\begin{enumerate}
\item[(i)] There are modular forms $f\in M_{k+l}(\Gamma)$ and $g\in M_{l}(\Gamma)$ such that 
$$\varphi(f)= \varphi(g)\cdot c\ \in \FFSS_\L.$$ 
\item[(ii)] For any $z\in \H_n^d$ there exist holomorphic modular forms $f_{z}\in M_{k+l'}(\Gamma')$ and $g_{z}\in M_{l'}(\Gamma')$, 
where the weight $l'$ and $\Gamma'$, a congruence subgroup of $\Gamma$, may depend on $z$, such that 
\begin{enumerate}
\item[(a)] $\varphi(f_z)= \varphi(g_z)\cdot c\in \FFSS_{\L'}$,
\item[(b)] $g_z(z)\neq 0$.
\end{enumerate}
\end{enumerate}
Then there exists an $h\in M_k(\Gamma)$ such that $c=\varphi(h)$, that is, $c$ is the Fourier expansion of the holomorphic Hilbert-Siegel modular form $h$.
In particular, the series $c$ is absolutely convergent on $\H_n^d$. 
\end{prop}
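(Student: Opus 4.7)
The plan is to reinterpret $c$ as a meromorphic Hilbert-Siegel modular form of weight $k$ via condition (i), verify its holomorphy on the whole domain $\H_n^d$ using (ii), and finally extend across the cusps via a Koecher-style argument rooted in the normality of the Baily-Borel compactification.

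First I would pass to quotient fields. By Proposition~\ref{prop8.1}, $\FFSS_\L$ is an integral domain, so $\varphi(g)\neq 0$ and the relation $\varphi(f)=\varphi(g)\cdot c$ gives $c=\varphi(f)/\varphi(g)$ in $Q(\FFSS_\L)$. By the commutativity of diagram (\ref{cd}) and the injectivity of $Q(\varphi)$, this reads $c = Q(\varphi)(f/g)$, where $f/g \in Q(M_*(\Gamma))$ is a meromorphic Hilbert-Siegel modular form of parallel weight $k$ for $\Gamma$.

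Next I would argue that $f/g$ is actually holomorphic on all of $\H_n^d$. Fix any $z\in\H_n^d$ and let $f_z\in M_{k+l'}(\Gamma')$, $g_z\in M_{l'}(\Gamma')$ be the forms furnished by (ii). Since $\Gamma'\subset\Gamma$ gives $\L'\subset\L$ and hence $\FFSS_\L\subset\FFSS_{\L'}$, both identities from (i) and (ii)(a) are valid inside $\FFSS_{\L'}$. Multiplying them yields
\[
\varphi_{\Gamma'}(f_z\, g) \;=\; \varphi_{\Gamma'}(g_z)\cdot c\cdot \varphi_{\Gamma'}(g) \;=\; \varphi_{\Gamma'}(g_z\, f).
\]
Since $\varphi_{\Gamma'}:M_*(\Gamma')\to\FFSS_{\L'}$ is injective (it records the Fourier expansion on $\H_n^d$), we conclude $f_z\, g = g_z\, f$ in $M_*(\Gamma')$. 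Thus $f/g = f_z/g_z$ as meromorphic functions on $\H_n^d$; by (ii)(b), $g_z(z)\neq 0$, so the right-hand side is holomorphic in a neighborhood of $z$, whence so is $f/g$. Since $z$ was arbitrary, $f/g$ is holomorphic on all of $\H_n^d$.

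It remains to upgrade this holomorphic meromorphic form to a genuine $h\in M_k(\Gamma)$, which amounts to holomorphy at the cusps. This is the step that invokes the normality of the Baily-Borel compactification of $\Gamma\backslash\H_n^d$ together with Kn\"oller's result \cite{knoeller}: a meromorphic section of the $k$-th power of the Hodge line bundle that is holomorphic on the interior extends across the cusp locus, which has codimension at least $2$ in the compactification. Granting this, $h:=f/g \in M_k(\Gamma)$ and the commutativity of (\ref{cd}) yields $\varphi(h) = Q(\varphi)(h) = Q(\varphi)(f/g) = c$, identifying $c$ with the Fourier expansion of $h$ and in particular giving absolute convergence on $\H_n^d$. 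The only genuine obstacle in this plan is precisely this last extension across the cusps, where passage from $\H_n^d$ to the Baily-Borel boundary --- and thus the detailed appeal to \cite{knoeller} --- does the real work; the steps handled by Proposition~\ref{prop8.1} and by pointwise injectivity of $\varphi$ are routine once the integral domain property and diagram (\ref{cd}) are in hand.
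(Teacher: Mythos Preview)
Your argument is correct and matches the paper's: form $h=f/g$ as a meromorphic form of weight $k$, use injectivity (you cross-multiply and apply injectivity of $\varphi$ on holomorphic forms; the paper applies injectivity of $Q(\varphi)$ directly) to obtain $f/g=f_z/g_z$ and hence holomorphy on all of $\H_n^d$, then conclude $h\in M_k(\Gamma)$. One small correction: the final extension across the cusps is just the Koecher principle (equivalently, normality of the Baily--Borel compactification together with the boundary having codimension $\ge 2$), which the paper takes for granted in the sentence ``this implies that $h\in M_k(\Gamma)$''; Kn\"oller's result \cite{knoeller} is used in the paper only in the proof of Proposition~\ref{prop8.1}, not here.
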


\begin{proof}
By (i) and the diagram \eqref{cd}, we have 
\[
Q(\varphi)\left(\frac{f}{g}\right)=\frac{\varphi(f)}{\varphi(g)}= c \in Q(\FFSS_\L).
\]
Let $h= g^{-1}f$ so that $h$ is a meromorphic modular form for $\Gamma$ of weight $k$.
Let $z\in \H_n^d$ and let $\Gamma'$ be as in (ii)(a).  The inclusion $M_r(\Gamma) \ra M_r(\Gamma')$ induces an inclusion of the 
diagram (\ref{cd}) for $\Gamma$ into the corresponding diagram for $\Gamma'$, and we have
\[
Q(\varphi)\left(\frac{f_{z}}{g_{z}}\right)= c .
\]
Since $Q(\varphi)$ is injective, we obtain 
\[
h=\frac{f}{g}=\frac{f_{z}}{g_{z}}.
\]
Thus, by (ii)(b), $h$  is holomorphic in neighborhood of $z$ and hence is holomorphic on all of $\H^d_n$.
But this implies that $h\in M_k(\Gamma)$ and $c=\varphi(h)$.
\end{proof}

\begin{proof}[Proof of Proposition~\ref{prop8.1}] 
The proposition follows from a special case of the results of Kn\"oller,  \cite{knoeller}. We sketch the idea. 
Let 
$\mathcal M_\Gamma = \Gamma\back \H_n^d$ the Hilbert-Siegel modular variety with respect to $\Gamma$ 
and let $\mathcal M_\Gamma^{BB}$ be its Baily-Borel compactification\footnote{This case was proved earlier by Baily, \cite{baily.HS}. }
Let $\xi_P$ be the point boundary stratum of $\mathcal M_\Gamma^{BB}$ associated to the Siegel parabolic $P$ and let 
$R=\OO_{\xi_P}$ be the corresponding local ring. By the normality of the Baily-Borel compactification, \cite{baily.borel}, $R$ is a normal noetherian local ring with maximal ideal $\mmm=\mmm_{\xi_P}$ and its $\mmm$-adic completion 
$$ \hat R=\varprojlim\limits_{r} R/\mmm^r$$
is also normal and, in particular, an integral domain. 

Let 
$$A = \{\, v\in \mathcal C\mid \tr_{F/\Q}\tr(x v)\ge1, \, \forall x\in \Sb_F\,\}.$$  
This is a `standard kernel', \cite{knoeller}, p.19, \cite{AMRT}, Theorem~5.2 (d). 
Note that $R$ is isomorphic to the ring 
of symmetric Fourier expansions (\ref{F-exp-f}) that are termwise absolutely convergent in sets  of the form
\beq\label{satake-open}
\{\,\tau=u+iv\in \H_n^d \mid v\in t\cdot A\,\},
\eeq
for some $t>0$. Indeed, if $f$ is a holomorphic function on some open neighborhood of $\xi_P$ in $\mathcal M_\Gamma^{BB}$
the pullback to $\H_n^d$ of its restriction to $\mathcal M_\Gamma$ is holomorphic in some open set of the form (\ref{satake-open}), 
cf. Section~6.1 of \cite{AMRT},  and has a symmetric Fourier expansion there. Conversely, the restriction of such a convergent series 
to a sufficiently small neighborhood (\ref{satake-open})  extends to a holomorphic function on an open neighborhood of $\xi_P$ in 
$\mathcal M_\Gamma^{BB}$ since the boundary has codimension $\ge2$. 
Note that $\mmm$ is the ideal of such expansions where $a_f(0)=0$. 

Following Section~2 of \cite{knoeller}, define $\l: \sfs\ra \Z_{\ge1}$ as 
$$\l(x) = \max\{ \,k \mid x = x_1+\dots+x_k, \ x_i\in \sfs\,\}.$$
This function satisfies $\l(x+y)\ge \l(x)+\l(y)$ and $\l(\e\cdot x) = \l(x)$, for all $\e\in \L$. As in Section~3 of \cite{knoeller}, 
let $I_0=R$ and, for $k\ge1$, let 
$$I_k = \{ f\in R\mid a_f(0)=0, \ a_f(x)=0, \forall x\in \sfs \text{\ with $\l(x)<k$}\ \}.$$
Then $I_k$ is an ideal in $R$ and these ideals satisfy 
$I_1 = \mmm$, $I_{k}\subset I_{k-1}$, and $I_k\cdot I_{k'} \subset I_{k+k'}$. 

The following result is the analogue of the Hilfsatz on p.127 of \cite{freitag.keihl} and is proved using standard facts about Poincar\'e series 
with respect to $\L$. 
\begin{lem} 
$$\FFSS_{\L} = \varprojlim\limits_k R/I_k.$$
\end{lem}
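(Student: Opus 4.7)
The plan is to identify both sides through a common filtration. I introduce on $\FFSS_\L$ the decreasing filtration $J_k = \{\,a\in \FFSS_\L \mid a(0)=0 \text{ and } a(T)=0 \text{ for all } T\in\sfs \text{ with } \lambda(T) < k\,\}$. The subadditivity $\lambda(x+y)\ge \lambda(x)+\lambda(y)$ shows that $J_k$ is an ideal with $J_k\cdot J_{k'} \subset J_{k+k'}$. The tautological inclusion $R\hookrightarrow\FFSS_\L$, sending a convergent symmetric Fourier expansion near $\xi_P$ to the same expansion regarded formally, satisfies $R\cap J_k = I_k$ by the very definition of $I_k$, so it induces an injection $R/I_k \hookrightarrow \FFSS_\L/J_k$ for each $k$. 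The plan is to show that this is an isomorphism for every $k$ and to check that $\FFSS_\L = \varprojlim \FFSS_\L/J_k$; the lemma then follows by passing to the inverse limit.

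For the completeness statement $\FFSS_\L = \varprojlim \FFSS_\L/J_k$, I need only verify that $\lambda(T)$ is finite for each $T\in\sfs$, so that the filtration $\{J_k\}$ is Hausdorff. This is where the standard kernel $A$ enters: for any $y\in A$, $\tr_{F/\Q}\tr(xy) \ge 1$ for every $x\in\sfs$, so any decomposition $T = x_1+\cdots+x_j$ with $x_i\in\sfs$ satisfies
$$\tr_{F/\Q}\tr(Ty) = \sum_i \tr_{F/\Q}\tr(x_i y) \ge j,$$
whence $\lambda(T) \le \tr_{F/\Q}\tr(Ty) < \infty$. Any compatible system $(a_k)\in\varprojlim \FFSS_\L/J_k$ then assembles unambiguously into a unique $a\in\FFSS_\L$ via $a(T) := a_k(T)$ for any $k > \lambda(T)$.

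The substantive step is surjectivity of $R/I_k\to \FFSS_\L/J_k$: every $\L$-symmetric formal Fourier polynomial supported on $\{T\in\sfs : \lambda(T) < k\} \cup \{0\}$ must be realized by an element of $R$ modulo $I_k$. The natural tool is the Poincar\'e series attached to $\L$: for each $\L$-orbit representative $T$ with $\lambda(T) < k$, one forms
$$P_T(\tau) = \sum_{\epsilon\in\L/\L_T} \qq^{\epsilon\cdot T},$$
where $\L_T$ denotes the $\L$-stabilizer of $T$. The series is tautologically $\L$-symmetric, its Fourier support is exactly the orbit $\L\cdot T$, and its Fourier coefficients there equal $1$; in particular, since $\lambda$ is $\L$-invariant, all of its nonzero coefficients lie in a single $\lambda$-value, so there is no ``spillover'' into other $\L$-orbits with $\lambda < k$. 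Provided $P_T$ converges on a tube $\{\tau = u+iv \mid v\in tA\}$ for some $t>0$, it defines an element of $R$; then taking a finite $\C$-linear combination of the $P_T$, one per $\L$-orbit with $\lambda(T) < k$, with coefficients dictated by the target element of $\FFSS_\L/J_k$, produces the desired lift.

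The main obstacle is the analytic input needed to run the Poincar\'e construction: convergence of $P_T$ on a tube over $tA$ for small $t>0$, and finiteness modulo $\L$ of the set $\{T\in\sfs : \lambda(T) < k\}$. Both rest on reduction theory for the $\L$-action on the cone $\mathcal{C}$ of totally positive symmetric matrices over $F$ (Humbert--Minkowski theory over number fields): every $\L$-orbit meets a bounded Minkowski fundamental domain in $\mathcal{C}$, inside which $\lambda(T) < k$ cuts out only finitely many lattice representatives; and standard growth estimates on $\L$ acting on $\mathcal{C}$ give termwise absolute convergence of $P_T$ in the interior of tubes over $tA$ for sufficiently small $t>0$. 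Once these analytic facts are secured, the matching of filtration quotients and the passage to the inverse limit are purely formal.
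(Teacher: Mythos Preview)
Your proposal is correct and follows exactly the route the paper indicates: the paper does not spell out a proof but says the lemma ``is proved using standard facts about Poincar\'e series with respect to $\L$'' and points to the Hilfssatz in Freitag--Kiehl. Your filtration $J_k$ on $\FFSS_\L$, the identification $R\cap J_k = I_k$, the Hausdorff/completeness argument via the pairing with $A$, and the surjectivity of $R/I_k \to \FFSS_\L/J_k$ by $\L$-symmetrized Poincar\'e series are precisely the ingredients behind that reference.

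One small wording issue: the Minkowski/Siegel fundamental domain for $\L$ on the cone is not bounded (it is itself a cone). What you need, and what reduction theory actually gives, is that inside such a fundamental domain the set $\{T : \lambda(T)<k\}$ is bounded and hence meets the lattice in finitely many points. Concretely, in a Siegel domain all archimedean components of $T$ have comparable eigenvalues, so a fixed totally positive $\mu$ can be subtracted from $T$ roughly $\min_j$(smallest eigenvalue of $\sigma_j(T))/\max_j$(largest eigenvalue of $\sigma_j(\mu))$ times while remaining totally positive semi-definite; this forces $\lambda(T)\gtrsim \text{(size of $T$)}$ in the fundamental domain. With that correction your outline is complete.
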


Now, as a special case of Satz~3.1.3 of \cite{knoeller}, the filtrations $( I_k)$ and $(\mmm^k)$ define the same topology on $R$ and hence
$$\FFSS_{\L} = \hat R$$
is an integral domain, as claimed. \end{proof}

\section{What Vogan-Zuckerman says about Hodge numbers}\label{section7}

Assume that $1\le d_+<d$, and that $\Gamma$ is neat so that $S= S_\Gamma$  as in (\ref{classical-S}) is a smooth compact complex manifold. 
The Betti cohomology and the Hodge numbers of $S$ can be described 
in term of the $(\g,K)$-cohomology of the space of smooth $K$-finite functions on $\Gamma\back G(\R)^+$, where $G= \SO(V)$.   
Here $G(\R)^+$ is the identity component of $G(\R)$, 
$$G(\R)^+  = \SO_0(m,2)^{d_+}\times \SO(m)^{d-d_+},$$
and 
$$K = (\SO(m)\times \SO(2))^{d_+}\times \SO(m)^{d-d_+},$$
is a maximal compact subgroup. 
Vogan and Zuckerman, \cite{VZ}, describe all irreducible Harish-Chandra modules that can contribute to this cohomology and the degrees in which these 
contribution occurs.  In the case at hand, 
their results imply the vanishing of 
certain Hodge numbers.  

\subsection{ Harish-Chandra modules with non-trivial $(\g,K)$-cohomology for $\SO_0(m,2)$}

In this section,  we work out the results of \cite{VZ} very explicitly (and naively) in this special case.  
This information is also to be found in the literature, cf. \cite{Li}, for example, 
and probably elsewhere, but we feel it might be useful to provide more details.  In particular, we do not know of a reference for the 
picture of Hodge diamond given below. 

We slightly shift notation and let $G= \SO_0(m,2)$ and $K= \SO(m)\times \SO(2)$, with real Lie algebras 
$\g_0$ and $\kk_0$. Let $\g= (\g_0)_\C$ and $\kk = (\kk_0)_\C$ be their complexifications, let $\g_0=\kk_0 +\pp_0$ be the Cartan 
decomposition,  and let $\theta$ be the corresponding Cartan involution, $\theta\vert_{\kk_0} = +1$, $\theta\vert_{\pp_0}=-1$. 

We write elements of $\g_0$ as 
$$X = \bpm X_1&X_2\\{}^tX_2&X_4\epm, \qquad X_1\in \Skew_m(\R), X_4\in \Skew_2(\R), X_2\in M_{m,2}(\R).$$
Here $\kk_0$ is the subalgebra where $X_2=0$ and $\pp_0$ is the subspace where $X_1=0$ and $X_4=0$. 
The element 
$$\hbold = \bpm 1_m&{}\\{}&J\epm, \qquad J=\bpm {}&1\\-1&{}\epm,$$
lies in the center of $K$ and the Harish-Chandra decomposition is 
$$\g = \kk + \pp_++\pp_-,$$
where $\pp_{\pm}$ are the $\pm i$-eigenspaces of $\text{\rm Ad}(\hbold)\vert_\pp$. 
Concretely, these subspaces are given by 
$$\pp_{\pm} = \{\ X\in \pp\mid X_2 =  (x_2, \pm i x_2), \ x_2\in \C^m\ \},$$
where we note that $(x_2, \pm i x_2) i J = \pm  (x_2, \pm i x_2)$. 
A Cartan subalgebra $\mathfrak t_0$ of $\g_0$ is given by 
$$\tbold(\abold)=\tbold(a_0, a_1, \dots, a_{m'})  = \bpm \diag(a_1 J, \dots, a_{m'} J, 0_*)&{}\\{}& a_0 J\epm, \qquad J=\bpm {}&1\\-1&{}\epm,$$
where $a_j\in \R$, 
$m' = \left[\frac{m}{2}\right]$, 
and $0_*$ indicates an extra $0$ when $m$ is odd.

The representations $A_\qqq$ are irreducible Harish-Chandra modules associated to $\theta$-stable parabolic subalgebras $\qqq$ of $\g$.  
These subalgebras are constructed as follows, \cite{VZ}, Section 2.  For integers $r$, $s$ with $0\le r\le m'$, $0\le s\le 1$, $r+s<\frac12 m$, let 
$$x_{r,s}(\abold) =i\,\tbold(a_0, a_1, \dots, a_{m'})\ \in i \,\kk_0,$$ 
where $a_j \ne 0$ for $1\le j \le r$, $a_j=0$ for $j>r$,  $a_0=0$ for $s=0$, and $a_0\ne 0$ for $s=1$. 
The endomorphism $\text{\rm ad}(x_{r,s})$ of $\g$ is diagonalizable and the subalgebra
$$\qqq =  \text{sum of the $\text{\rm ad}(x_{r,s})$-eigenspaces with eigenvalue $\mu\ge0$.}$$
is a $\theta$-stable parabolic subalgebra with decomposition 
$\qqq=\mathfrak l + \mathfrak u$ where $\mathfrak l$ is the sum of the eigenspaces with $\mu=0$  and $\mathfrak u$ is the sum of the 
eigenspaces with $\mu>0$. 

The representation $A_{\qqq}$ associated to $\qqq$ is characterized by Theorem~2.5 of \cite{VZ}.  
As explained in Section 6 of \cite{VZ}, there is a Hodge type decomposition
\beq\label{gK-Hodge}
H^i(\g,K;A_{\qqq}) =\bigoplus_{p+q=i} H^{p,q}(\g,K;A_{\qqq})
\eeq
of the $(\g,K)$-cohomology. 
The main fact that we need is the following result, extracted from Proposition~6.19 of \cite{VZ}. 
\begin{prop}  For a $\theta$-stable parabolic $\qqq = \mathfrak l + \mathfrak u$,  let $R_{\pm} = \dim(\mathfrak u \cap \pp_{\pm})$. 
Then 
$$H^{p,q}(\g,K;A_{\qqq}) =0$$ 
unless $p-q = R_+-R_-$. 
\end{prop}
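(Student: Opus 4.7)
The idea is to combine Vogan--Zuckerman's description of the $(\g,K)$-cohomology of $A_\qqq$ with the presence of a central circle in $L\cap K$ which scales $\pp_+$ and $\pp_-$ by opposite characters.

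First I would recall from \cite{VZ} the identification
\begin{equation*}
H^i(\g, K; A_\qqq) \;\cong\; \operatorname{Hom}_{L\cap K}\!\Big(\textstyle\bigwedge^{i-R}(\mathfrak l\cap \pp),\ \C\Big),
\end{equation*}
where $R=R_++R_-=\dim_\C(\mathfrak u\cap\pp)$; the shift by $R$ reflects the fact that the minimal $K$-type of $A_\qqq$ contains the line $\bigwedge^{R_+}(\mathfrak u\cap \pp_+)\otimes\bigwedge^{R_-}(\mathfrak u\cap \pp_-)$, and the $L\cap K$-structure on the left factor at degree $i$ comes from wedging with $\mathfrak l\cap\pp$. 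Since $\mathfrak l\cap\pp = (\mathfrak l\cap\pp_+)\oplus(\mathfrak l\cap\pp_-)$, the exterior algebra splits by bidegree, and the identification refines the Hodge decomposition \eqref{gK-Hodge} as
\begin{equation*}
H^{p,q}(\g,K; A_\qqq)\;\cong\;\operatorname{Hom}_{L\cap K}\!\Big(\textstyle\bigwedge^{p-R_+}(\mathfrak l\cap \pp_+)\otimes \bigwedge^{q-R_-}(\mathfrak l\cap \pp_-),\ \C\Big).
\end{equation*}

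Second, I would locate the $\SO(2)$-factor of $K$ inside $L\cap K$. Its infinitesimal generator is $\hbold_0:=\tbold(1,0,\dots,0)\in\mathfrak t_0$, which satisfies $\exp(\tfrac{\pi}{2}\hbold_0)=\hbold$. Since $x_{r,s}\in i\mathfrak t_0$ and $\mathfrak t_0$ is abelian, $[\hbold_0,x_{r,s}]=0$, so the whole circle $S^1:=\exp(\R\hbold_0)$ lies in $L\cap K$. By the very definition of $\pp_\pm$ as the $\pm i$-eigenspaces of $\operatorname{Ad}(\hbold)$ on $\pp$, these are opposite, non-trivial weight spaces for $S^1$. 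Consequently $S^1$ acts on $\bigwedge^{p-R_+}(\mathfrak l\cap\pp_+)\otimes\bigwedge^{q-R_-}(\mathfrak l\cap\pp_-)$ by a character whose exponent is a nonzero multiple of $(p-R_+)-(q-R_-)$. For the $L\cap K$-invariant Hom-space into the trivial representation to be nonzero, this character must be trivial, forcing $p-q=R_+-R_-$.

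The main obstacle is the first step: extracting the \emph{Hodge-refined} Vogan--Zuckerman formula rather than the ungraded version, which requires identifying the minimal $K$-type of $A_\qqq$ with $\bigwedge^{R_+}(\mathfrak u\cap\pp_+)\otimes\bigwedge^{R_-}(\mathfrak u\cap\pp_-)$ as an $L\cap K$-module compatibly with the Dolbeault-type bigrading on $\bigwedge^\bullet\pp^*$ coming from $\pp=\pp_+\oplus\pp_-$. Once this bigraded description and the observation $\hbold_0\in\mathfrak l$ are in place, the central-character argument yields the asserted vanishing outside the diagonal $p-q=R_+-R_-$ immediately.
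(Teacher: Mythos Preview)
The paper gives no proof here; it simply cites Proposition~6.19 of \cite{VZ}. Your outline is correct and is essentially the argument behind that proposition: the $(p,q)$-refinement of the Vogan--Zuckerman isomorphism (valid because the generator $\tbold(1,0,\dots,0)$ of the $\SO(2)$-factor lies in $\mathfrak t\subset\mathfrak l$, so both $\mathfrak u\cap\pp$ and $\mathfrak l\cap\pp$ split according to $\pp=\pp_+\oplus\pp_-$), followed by the central-character constraint forcing $(p-R_+)-(q-R_-)=0$. The obstacle you flag---justifying the Hodge-refined formula---is precisely what \cite{VZ}, Proposition~6.19, establishes, and your reasoning for why the bigrading is respected is the right one.
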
 

The next result records the possible values for $(R_+,R_-)$. 

\begin{prop} Suppose that $\qqq$ is is constructed, as above, from $x_{r,s}(\abold)$. \hfb 
(i) If $s=0$, then $(R_+,R_-) = (r,r)$. \hfb
(ii) If $s=1$,  let $\delta_+$ (resp. $\delta_-$) be the number of $j$'s for which $a_j + a_0=0$ (resp. $a_j  -a_0=0$). 
Then 
$$(R_+,R_-) = \begin{cases} (r - \delta_+, m-r-\delta_-) &\text{if $a_0>0$}\\
(m-r-\delta_+,r-\delta_-)&\text{if $a_0<0$.}
\end{cases}
$$
In particular, $0\le \delta_++\delta_- \le r \le \left[\frac{m}2\right]$ and 
$$R_++R_- = m-\delta_+-\delta_-  \ge m-\left[\frac{m}2\right].$$
\end{prop}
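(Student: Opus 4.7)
The plan is to diagonalize $\text{ad}(x_{r,s})$ on $\pp$ explicitly and then count positive eigenvalues in $\pp_+$ and $\pp_-$. First I would fix eigenvectors $v_j^{\pm}=e_{2j-1}\pm ie_{2j}$ of $T_1$ on $\C^m$, with eigenvalues $\pm i a_j$ for $j\le m'$ (together with $e_m$, eigenvalue $0$, when $m$ is odd), and eigenvectors $w^{\pm}$ of $J$ on $\C^2$ with eigenvalues $\pm i a_0$. Under the identification $X\leftrightarrow X_2$, $\pp_\C=\C^m\otimes\C^2$ has the weight basis $v_j^{\epsilon_1}\otimes w^{\epsilon_2}$, and the conditions $X_2=(x_2,\pm ix_2)$ identify $\pp_\pm$ with the subspaces $\C^m\otimes w^{\pm}$. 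A direct computation of $[\tbold,X]$ using the block form $\tbold=\mathrm{diag}(T_1,a_0J)$ shows that $\text{ad}(x_{r,s})=i\,\text{ad}(\tbold)$ acts on $v_j^{\epsilon_1}\otimes w^{\epsilon_2}$ by the real eigenvalue $-(\epsilon_1 a_j+\epsilon_2 a_0)$, with the convention $\epsilon_1=0$ for the $e_m$ term when $m$ is odd.

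From this spectral picture the two parts are a matter of bookkeeping. For (i) with $a_0=0$, the eigenvalue reduces to $-\epsilon_1 a_j$, which is nonzero only for $j\le r$ and $\epsilon_1\ne 0$, and takes both signs exactly once for each such $j$; hence $R_+=R_-=r$. For (ii) with $a_0\ne 0$, I would split by the sign of $a_0$ and, for each $j\le r$, by the relative size of $a_j$ with respect to $\pm a_0$. The weight vectors with $j>r$ (and the $e_m$ term when $m$ is odd) all have eigenvalue $-\epsilon_2 a_0$ and so contribute entirely to either $R_+$ or $R_-$ according to the sign of $a_0$; this is what produces the asymmetric shape $(r-\delta_+,m-r-\delta_-)$ for $a_0>0$ and its counterpart for $a_0<0$.

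The ``in particular'' statement is cleanest via a symmetry argument, which I would present separately. The multiset of eigenvalues of $\text{ad}(x_{r,s})$ on $\pp$ is
\[
\{\,\pm a_j \pm a_0 : 1\le j\le m'\,\}\ \cup\ \{\,\pm a_0 : m\text{ odd}\,\},
\]
where all four signs are independent in the first piece; this multiset is stable under $\lambda\mapsto -\lambda$, so positive and negative eigenvalues are equinumerous. A zero eigenvalue occurs iff some $a_j=\pm a_0$, and each such coincidence contributes one zero to $\pp_+$ and one to $\pp_-$, for a total of $2(\delta_++\delta_-)$ zeros. Therefore
\[
R_++R_- \;=\; \tfrac12\bigl(\dim_\C\pp-2(\delta_++\delta_-)\bigr) \;=\; m-\delta_+-\delta_-.
\]
Since $a_0\ne 0$, each $j$ satisfies at most one of $a_j=\pm a_0$, so $\delta_+$ and $\delta_-$ count disjoint subsets of $\{1,\dots,r\}$, giving $\delta_++\delta_-\le r\le m'=\left[\frac{m}{2}\right]$ and hence $R_++R_-\ge m-\left[\frac{m}{2}\right]$. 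The main obstacle is keeping the sign conventions consistent when identifying $\pp_\pm$ with weight subspaces and when carrying out the case analysis in part~(ii); once the eigenvalue formula $-(\epsilon_1 a_j+\epsilon_2 a_0)$ is in hand, everything else is routine counting.
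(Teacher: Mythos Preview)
Your approach is essentially the paper's: diagonalize $\text{ad}(x_{r,s})$ on $\pp$ by finding simultaneous eigenvectors for the left and right $J$-actions, then count positive eigenvalues in $\pp_+$ and $\pp_-$. Your tensor-product formulation $\pp_\C\simeq\C^m\otimes\C^2$ with $\pp_\pm=\C^m\otimes w^{\pm}$ and the eigenvalue formula $-(\epsilon_1 a_j+\epsilon_2 a_0)$ are correct (modulo the slip ``eigenvalues $\pm ia_0$'' for $J$, which should read $\pm i$) and amount to exactly the block computation the paper carries out.

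There is, however, a genuine gap in your handling of part~(ii): you only \emph{announce} that you would split by the relative size of $a_j$ and $\pm a_0$, without doing the count. If you carry it out you find that, for a block $j\le r$, the two $\pp_+$-eigenvalues are $a_j-a_0$ and $-(a_j+a_0)$; when $a_0>0$ and $|a_j|<a_0$ these are \emph{both} negative, so that block contributes $0$ to $R_+$ and $2$ to $R_-$, not $1$ to each. Thus the pair $(R_+,R_-)$ really depends on how many $j\le r$ satisfy $|a_j|<|a_0|$, not only on $(r,\delta_+,\delta_-,\text{sign}\,a_0)$. The paper's proof glosses over the same point: its assertion that in the regular case ``one of them lies in $\pp_+$ and the other in $\pp_-$'' tacitly assumes $|a_j|>|a_0|$. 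So the case analysis you defer cannot, as written, produce the stated formulas for $R_+$ and $R_-$ separately without an extra hypothesis on the $a_j$.

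Your independent symmetry argument for the ``in particular'' clause, by contrast, is correct and clean: the multiset of $\text{ad}(x_{r,s})$-eigenvalues on $\pp$ is $\pm$-symmetric, zeros occur in pairs (one in $\pp_+$, one in $\pp_-$) exactly when some $a_j=\pm a_0$, and hence $R_++R_-=m-\delta_+-\delta_-\ge m-\left[\tfrac{m}{2}\right]$. Since it is this sum, not the individual values of $R_\pm$, that feeds into the Corollary and the vanishing of odd-degree cohomology, your proposal does establish everything that is actually used downstream.
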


\begin{proof} We want to calculate $\dim(\mathfrak u \cap \pp_{\pm})$ and so we consider the action of $\text{ad}(x_{r,s}(\abold))$ and $\text{Ad}(J)$ on 
an element of $\pp$. Write 
$$\text{\rm ad}(x_{r,s}(\abold)):\ X_2 = \bpm A_1\\ \vdots \\ A_r\\ \a_1\\ \vdots\\ \a_{m-2r}\epm \ \longmapsto \bpm  a_1\, iJ A_1 - a_0\,A_1 \,iJ\\
\vdots \\ a_r\, iJ A_r - a_0\,A_r \,iJ\\ - a_0\,\a_1\, iJ\\ \vdots\\ - a_0 \,\a_{m-2r}\,iJ\epm.
$$
for $2\times 2$ blocks $A_j$ and row vectors $\a_k$.  The eigenvalues of the transformation 
$$A \mapsto a\, iJ A - a_0\,A \,iJ, \qquad A\in M_2(\C), \ a, a_0\in \C$$
are $\pm(a-a_0)$ and $\pm (a+ a_0)$. Corresponding eigenvectors are
$$\bpm -1&i\\i&1\epm, \quad \bpm 1&-i\\i&1\epm, \quad \bpm 1&i\\ -i&1\epm, \quad \bpm -1&-i\\-i&1\epm,$$
where we note that the $\pm (a-a_0)$-eigenvectors satisfy $A iJ = - A$ and the $\pm (a+a_0)$-eigenvectors 
satisfy $A iJ = A$. 
For a row vector $\a$, the eigenvalues of the transformation $\a \mapsto -a_0\,\a\,iJ$ are $\pm a_0$ with corresponding eigenvectors $\pm(1,-i)$. 
Thus the $a_0$-eigenvector satisfies $\a \,iJ = -\a$ and the $-a_0$-eigenvector satisfies $\a\,iJ = \a$. 
\hfb 
Suppose that $s=1$ so that $a_0\ne 0$ and that $x_{r,1}(\abold)$ is regular, i.e., that $a_j\pm a_0\ne 0$ for $1\le j\le r$.  Then in $j$th block, precisely two eigenspaces have positive eigenvalue
and one of them lies in $\pp_+$ and the other in $\pp_-$. Thus each such block contributes $1$ to both $R_+$ and $R_-$.  
The eigenvalues of the transformation $\a \mapsto -a_0\,\a\,iJ$ are $\pm a_0$.  For $a_0>0$ (resp. $a_0<0$), such a row contributes $1$ to $R_-$ 
(resp. $R_+$). This proves (ii) in the regular case.  \hfb 
Now suppose that $s=1$ and let $\delta_+$ (resp. $\delta_-$) be the number of $j$'s for which $a_j + a_0=0$ (resp. $a_j  -a_0=0$). 
Now the $A_j$ blocks with $a_j+a_0=0$ do not make a contribution to $R_+$, since the corresponding eigenvalue $\mu=0$, while 
the $A_j$ blocks with $a_j-a_0=0$ do not make a contribution to $R_-$. This proves (ii) in general. 
We omit the easier case (i). 
\end{proof}

{\bf Remark.} 
Note that when $s=1$ and $x_{r,1}(\abold)$ is regular,  the whole space $\pp$ is spanned by non-zero eigenspaces. Thus we have $\mathfrak l \subset \kk$ so that
the $A_{\qqq}$'s are discrete series representations, \cite{VZ}, p. 58, and contribute only to the cohomology in the middle dimension, with Hodge numbers 
$(r,m-r)$, for $a_0>0$ and $(m-r,r)$, for $a_0<0$. 

We thus have the following vanishing result for Hodge numbers, which plays an essential role in our argument. 
\begin{cor}  If $H^{p,q}(\g,K;A_{\qqq})\ne 0$ for some $(p,q)$ with $p\ne q$, then 
$p+q \ge m - \left[\frac{m}2\right].$
\end{cor}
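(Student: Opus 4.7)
The plan is to combine the two preceding propositions with the finer form of Proposition~6.19 of \cite{VZ}, of which only the parity consequence has been recorded explicitly above. First, I would argue that if $H^{p,q}(\g,K;A_\qqq)\ne 0$ with $p\ne q$, then the parity identity $p-q = R_+ - R_-$ immediately forces $R_+ \ne R_-$. This excludes case (i) of the second preceding proposition, in which $(R_+, R_-) = (r,r)$, so we must be in case (ii); reading off the last line of that proposition then gives
\[ R_+ + R_- \;=\; m - \delta_+ - \delta_- \;\ge\; m - \left[\tfrac{m}{2}\right]. \]

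Next I would invoke the stronger form of Proposition~6.19 of \cite{VZ}, which identifies the Hodge piece more precisely as
\[ H^{p,q}(\g,K;A_\qqq) \;\cong\; \operatorname{Hom}_{L\cap K}\!\bigl(\wedge^{p-R_+}(\mathfrak l \cap \pp_+) \otimes \wedge^{q-R_-}(\mathfrak l \cap \pp_-),\,\C\bigr). \]
This implicitly forces $p \ge R_+$ and $q \ge R_-$ whenever the cohomology is non-zero (otherwise the exterior powers, and hence the Hom-space, vanish). Combining with the previous step yields
\[ p+q \;\ge\; R_+ + R_- \;\ge\; m - \left[\tfrac{m}{2}\right], \]
which is the desired inequality.

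The argument is essentially bookkeeping from material already assembled in the excerpt; the only non-routine ingredient is the sharper content of Proposition~6.19, namely the bounds $p \ge R_+$ and $q \ge R_-$, rather than only the parity relation $p-q=R_+-R_-$ that was stated above. This is a standard feature of the Vogan--Zuckerman construction: the module $A_\qqq$ carries a ``base'' Hodge piece $\wedge^{R_+}(\mathfrak u\cap\pp_+)\otimes\wedge^{R_-}(\mathfrak u\cap\pp_-)$ sitting in bidegree $(R_+,R_-)$, and higher bidegrees arise by wedging with elements of $\mathfrak l\cap\pp_\pm$. I do not expect a real obstacle; the work is simply extracting the right corollary from \cite{VZ}.
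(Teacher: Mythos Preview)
Your argument is correct and is essentially the intended one; the paper states the Corollary as an immediate consequence of the two preceding Propositions without further proof. You have correctly identified that the first Proposition, as stated, records only the parity constraint $p-q=R_+-R_-$, and that to conclude $p+q\ge R_+ + R_-$ one must return to the full content of Proposition~6.19 of \cite{VZ} for the bounds $p\ge R_+$ and $q\ge R_-$; this is exactly the extra input the author is tacitly invoking.
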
 

\subsection{Global consequences for Hodge numbers}

Now return to the global situation where $S = \Gamma\back D^+ = \Gamma\back G(\R)^+/K$,  
for $\Gamma$ neat and $1\le d_+<d$, or $d_+=d$ and $V$ is anisotropic\footnote{This require that $V_v$ be anisotropic at some finite place of $F$ and hence can only occur when $\dim_F V\le 4$.}.
Following the discussion in the introduction of \cite{VZ}, write
$$L^2(\Gamma\back G)\simeq \bigoplus_{\pi \in \hat G} m_\pi \,\mathcal H_\pi,$$
a Hilbert space direct sum of irreducible unitary representations of $G$ with finite multiplicities. 
The cohomology of $S$ is then described as
$$H^*(S,\C) \simeq \bigoplus_{\pi\in \hat G} m_\pi \, H^*(\g,K; \mathcal H^K_\pi),$$
where $\mathcal H^K_\pi$ is the Harish-Chandra module of $\pi$. Vogan and Zuckerman, \cite{VZ}, Theorem~4.1, show that  $H^*(\g,K; \mathcal H^K_\pi)\ne 0$ 
only if $\mathcal H^K_\pi \simeq A_{\qqq}$ for some $\qqq$. 
Thus, via the Kunneth formula for relative Lie algebra cohomology and (\ref{gK-Hodge}), we have the following. 

\begin{cor}\label{cor-better-van}  Suppose that $H^{(p,q)}(S) \ne 0$ for some $(p,q)$ with $p\ne q$. Then $p+q \ge m- \left[\frac{m}2\right].$
In particular $H^{2k-1}(S,\C) = 0$ if $2k-1<m- \left[\frac{m}2\right]$ and the intermediate Jacobian $J^k(S)=0$ in this range. 
\end{cor}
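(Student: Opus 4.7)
My approach is to combine the Matsushima decomposition of $H^*(S,\C)$ with the Vogan--Zuckerman description of admissible Harish--Chandra modules, and to propagate the bound from the previous corollary to a product of factors via the K\"unneth formula for $(\g,K)$-cohomology.

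First, I write $G(\R)^+ = \prod_j G_j(\R)^+$ with $G_j = \SO_0(m,2)$ for $j \le d_+$ and $G_j = \SO(m)$ for $j > d_+$, and $K = \prod_j K_j$ correspondingly. The Matsushima formula refines to a bigraded isomorphism
\begin{equation*}
H^{p,q}(S,\C) \;\simeq\; \bigoplus_{\pi \in \hat G} m_\pi \, H^{p,q}(\g,K;\mathcal H^K_\pi),
\end{equation*}
and by Theorem~4.1 of \cite{VZ} the only $\mathcal H^K_\pi$ that can contribute are those isomorphic to $A_\qqq$ for some $\theta$-stable parabolic $\qqq$. Writing $\pi = \bigotimes_j \pi_j$ and $\qqq = \bigoplus_j \qqq_j$, the K\"unneth formula gives
\begin{equation*}
H^{p,q}(\g,K;A_\qqq) \;\simeq\; \bigoplus_{\substack{p_1+\dots+p_d = p \\ q_1+\dots+q_d = q}} \; \bigotimes_{j=1}^{d} H^{p_j,q_j}(\g_j,K_j;A_{\qqq_j}).
\end{equation*}

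The key observation is the behavior of the compact factors. For $j > d_+$, we have $\g_j = \kk_j$, so $\pp_j = 0$ and consequently $\pp_{j,\pm} = 0$. Thus $H^{p_j,q_j}(\g_j,K_j;A_{\qqq_j}) = 0$ unless $(p_j,q_j) = (0,0)$, in which case $A_{\qqq_j}$ must be the trivial representation. In particular, every compact factor contributes only in bidegree $(0,0)$. For $j \le d_+$, the previous corollary applies: if $H^{p_j,q_j}(\g_j,K_j;A_{\qqq_j}) \ne 0$ with $p_j \ne q_j$, then $p_j + q_j \ge m - [m/2]$.

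Now suppose $H^{p,q}(S,\C) \ne 0$ with $p \ne q$. Then in the K\"unneth decomposition above, some summand is nonzero, and some index $j_0$ must satisfy $p_{j_0} \ne q_{j_0}$. By the preceding paragraph $j_0 \le d_+$ (the compact factors only contribute balanced bidegree $(0,0)$), and the previous corollary gives $p_{j_0} + q_{j_0} \ge m - [m/2]$. Since all the other $p_l + q_l \ge 0$, we conclude $p + q \ge p_{j_0} + q_{j_0} \ge m - [m/2]$.

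For the consequences, any odd-degree class lies in some $H^{p,q}$ with $p + q$ odd, hence $p \ne q$; applying the bound, $H^{2k-1}(S,\C) = 0$ whenever $2k-1 < m - [m/2]$. The intermediate Jacobian $J^k(S) = H^{2k-1}(S,\C)/\bigl(F^k H^{2k-1}(S,\C) + H^{2k-1}(S,\Z)\bigr)$ then vanishes automatically in this range. The only nontrivial input I am relying on is the naturality of the Hodge bigrading under K\"unneth and under Matsushima, plus the Vogan--Zuckerman classification; the main subtlety is just the bookkeeping that compact archimedean factors force the trivial representation and contribute nothing off the diagonal, so the full ``cost'' $m - [m/2]$ of going off-diagonal must be paid at a single non-compact factor.
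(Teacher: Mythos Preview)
Your argument is correct and follows essentially the same route as the paper: Matsushima's formula, the Vogan--Zuckerman classification, the K\"unneth decomposition across the archimedean factors, and then the single-factor corollary to force the off-diagonal bound at some noncompact place. You have simply made explicit the bookkeeping (compact factors contribute only in bidegree $(0,0)$, so an off-diagonal contribution must come from one of the $\SO_0(m,2)$ factors) that the paper leaves implicit in the one-line reference to K\"unneth and \eqref{gK-Hodge}.
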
 
Note that one should be able to slightly extend this vanishing range using relations between archimedean components of automorphic representations
coming from Arthur's classification, but we will not 
need such an improvement. 

\section{Weighted cycles and `natural' cycles}\label{section10}

In this section, we extend the discussion of Sections 2--5 of \cite{K.duke} to the case of $d_+>1$.  In \cite{K.duke}, Definition~5.2, the weighted cycles were defined 
in terms of the `natural' cycles given by sub-Shimura varieties.  An 
expression for them in terms of connected or classical cycles was then proved in Proposition~5.5 of loc. cit.
Here in our definition (\ref{def-wt-cycle}) of weighted cycles in Section~\ref{section5}, we have taken the 
expression in terms of connected/classical cycles as the starting point, since this expression is what is needed for the intersection theory 
calculations of Section~\ref{section4}.  
In the present section, we prove that the weighted cycles defined by the analogue of Definition~5.2 of \cite{K.duke} 
coincide with the ones defined in Section~\ref{section5} above. 
Especially in the case $n=m$, this requires some care 
about connected components and so we give a detailed discussion.

Having fixed a connected component $D^+$ in Section~\ref{section2} above, we can index the components $D^\e$ of $D$ 
by collections $\e = (\e_1, \dots, \e_{d_+})$, where $\e_j= \pm1$.   

For a totally positive subspace $W\subset V$ with $\dim_F W= n\le m$, we let $H = G_W$ be the pointwise stabilizer of $W$ in $G$, so that $H\simeq R_{F/\Q} \GSpin(W^\perp)$. 
The space $W^\perp_j = W^\perp\tt_{F,\s_j} \R$, for $1\le j \le d_+$,  has signature $(m-n,2)$. 
We let $D_W^{(j)}$ be the space of oriented negative $2$-planes in $W_j^\perp$, let 
$$D_W = \prod_j D_W^{(j)},$$
and let $D_W^\e = D_W\cap D^\e$.  Thus, $D_W^+ = D_W\cap D^+$, as in (\ref{DpW}),  and $D_W$ has $2^{d_+}$ components.   

Note that, if $n<m$, the group $H(\R)$ also has $2^{d_+}$ components and acts transitively on $D_W$, whereas, if $n=m$, the group
$$H(\R) \simeq \GSpin(2)^d$$
is connected and acts trivially on the finite set of points $D_W$.  This distinction will lead to slight differences in the treatment of the two cases.

For $g\in G(\A_f)$, the codimension $d_+n$ cycle
$$Z(W,g,K)^\nat:=H(\Q) \back D_W \times H(\A_f)/K_{H,g}\  \lra \   G(\Q)\back D\times G(\A_f)/K, \qquad [z,h] \ \mapsto [z, hg]$$
is called a `natural' cycle in \cite{K.duke}. Here $K_{H,g}= H(\A_f)\cap g K g^{-1}$. 
On the other hand, for any $g_0\in G(\A_f)$, if we let $\Gamma'_{g_0} = G(\Q)_+\cap g_0 K g_0^{-1}$ and let $\Gamma_{g_0}$ be its image in $\SO(V)$, 
then the connected cycle
defined in (\ref{basic-cycle}) is 
$$Z(W)_{\Gamma_{g_0}} = \pi_{\Gamma_{g_0}}(D_W^+)\ \subset \Gamma_{g_0}\back D^+.$$

We next describe the relation between these cycles. 

For each $\e$, choose $\gamma_\e\in G(\Q)$ such that 
$$\gamma_\e\, D^\e = D^+.$$
Writing 
$$G(\A_f)= \bigsqcup_j G(\Q)_+ g_j K,$$
we have an isomorphism
\beq\label{compo-iso}
G(\Q)\back D\times G(\A_f)/K \isoarrow \bigsqcup_j \Gamma_{g_j}\back D^+,
\eeq
where, if $z\in D^\e$ and $\gamma_\e g = \gamma^{-1} g_j k$ with $\gamma\in G(\Q)_+$, then 
$$[z,g] \ \mapsto \pi_{\Gamma_{g_j}}(\gamma\gamma_\e z).$$

As in Lemma~4.1 of \cite{K.duke}, write
\beq\label{HA-double}
H(\A_f) = \bigsqcup_i H(\Q)_+ h_i K_{H,g}.
\eeq
If $n<m$, then we can choose $\gamma_{\e,H}\in H(\Q)$ with $\gamma_{\e,H} D^\e_W = D^+_W$ and obtain
\beq\label{H-compo-iso}
H(\Q) \back D_W \times H(\A_f)/K_{H,g} \isoarrow  \bigsqcup_i \Gamma_{H, i}\back D^+_W,
\eeq
where $\Gamma_{H,i} = H(\Q)_+\cap h_i K_{H,g} h_i^{-1} = H(\Q)_+ \cap h_i g K g^{-1} h_i^{-1}$. 
For each $i$, write $h_ig = \gamma_i^{-1}g_j k_i$ where $\gamma_i\in G(\Q)_+$ and $j=j(i)$ depend on $h_i$. 
Then the point $\pi_{\Gamma_{i,H}}(z)$, $z\in D_W^+$ on the right side of (\ref{H-compo-iso}), with preimage $[z,h_i]$ on the left side, 
maps to the point $\pi_j(\gamma_i z)$ on the right side of (\ref{compo-iso}). Note that $\gamma_i z\in D_{\gamma_i W}^+$.  This proves the following. 
\begin{lem} If $n<m$, then, under the isomorphism (\ref{compo-iso}),
\beq\label{new.lem.4.1-a}
Z(W,g,K)^\nat = \sum_i Z(\gamma_i W)_{\Gamma_j}, 
\eeq
where, for  coset representatives $h_i$ as in (\ref{HA-double}), 
$h_ig = \gamma_i^{-1}g_j k_i$, with $\gamma_i\in G(\Q)_+$, $k_i\in K$,  and $j=j(i)$ depending on $h_i$.
\end{lem}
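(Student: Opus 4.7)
The statement is essentially a bookkeeping result, unwinding the composite of the two decompositions (\ref{HA-double}) and (\ref{compo-iso}). The plan is to track a point of the natural cycle through each step.

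First, since $n<m$, the group $H(\R)$ acts transitively on the $2^{d_+}$ components of $D_W$, so for each $\e$ we may choose $\gamma_{\e,H}\in H(\Q)$ taking $D_W^\e$ to $D_W^+$. Combined with the decomposition (\ref{HA-double}), this yields the isomorphism (\ref{H-compo-iso}) identifying
$$Z(W,g,K)^\nat \ \simeq\ \bigsqcup_i \Gamma_{H,i}\back D_W^+,$$
and it suffices to describe the image of each component $\Gamma_{H,i}\back D_W^+$ under the map to $G(\Q)\back D\times G(\A_f)/K$ followed by (\ref{compo-iso}).

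Next, a point $z\in D_W^+$ representing a class in the $i$th component maps to the class $[z,h_ig]\in G(\Q)\back D\times G(\A_f)/K$. Using the factorization $h_ig = \gamma_i^{-1}g_jk_i$ with $\gamma_i\in G(\Q)_+$ and $k_i\in K$, we rewrite this as $[\gamma_i z,g_j]$. Since $\gamma_i\in G(\Q)_+$ preserves $D^+$, we have $\gamma_i z\in D^+$, and by the definition of (\ref{compo-iso}) its image in $\Gamma_j\back D^+$ is $\pi_j(\gamma_i z)$. Letting $z$ range over $D_W^+$, $\gamma_i z$ ranges over $\gamma_i D_W^+ = D_{\gamma_i W}^+$, so the image of this component is $\pi_j(D_{\gamma_i W}^+) = Z(\gamma_i W)_{\Gamma_j}$.

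Finally, to see that the $i$th component maps onto $Z(\gamma_i W)_{\Gamma_j}$ as a cycle (without extra multiplicity), I would check that $\gamma_i \Gamma_{H,i}\gamma_i^{-1}$ equals the subgroup of $\Gamma_j$ stabilizing $\gamma_i W$ pointwise. From $h_ig = \gamma_i^{-1}g_jk_i$ one gets
$$\gamma_i \Gamma_{H,i}\gamma_i^{-1} = \gamma_i H(\Q)_+\gamma_i^{-1}\cap g_j K g_j^{-1} = H_{\gamma_i W}(\Q)_+\cap g_j K g_j^{-1},$$
which is exactly the pointwise stabilizer of $\gamma_i W$ in $\Gamma_j$. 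Summing over $i$ gives (\ref{new.lem.4.1-a}).

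The step I expect to be the mildest obstacle is the last one: while the computation of the conjugated group is formal, one must be careful that the quotient $\Gamma_{H,i}\back D_W^+$ injects into $\Gamma_j\back D^+$ (rather than covering $Z(\gamma_i W)_{\Gamma_j}$ nontrivially). This reduces to the Jaffee-type observation that, after the embedding trick built into the choice of $K$ (which one may assume neat), the map is an embedding of the connected cycle as defined in (\ref{basic-cycle}). The cases $n=m$, where $D_W$ is $0$-dimensional and $H(\R)$ is connected, will require a slight variant handled separately.
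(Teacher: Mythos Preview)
Your argument is correct and matches the paper's proof, which is the short paragraph immediately preceding the lemma: decompose the natural cycle via (\ref{H-compo-iso}) and track $[z,h_i]\mapsto[\gamma_iz,g_j]\mapsto\pi_j(\gamma_iz)\in Z(\gamma_iW)_{\Gamma_j}$ using $h_ig=\gamma_i^{-1}g_jk_i$. Your extra stabilizer computation for the multiplicity-one check is more careful than the paper, which leaves this implicit; note that it only requires neatness together with part~(i) of Lemma~\ref{jaffee.lem} (so that $\tilde\Gamma_{j,\gamma_iW}=\Gamma_{j,\gamma_iW}$), not the full Jaffee injectivity statement.
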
 

Now suppose that $n=m$ so that $H(\Q)=H(\Q)_+$ and 
\beq\label{H-compo-iso-bis}
H(\Q) \back D_W \times H(\A_f)/K_{H,g} =  D_W\times H(\Q)_+\back H(\A_f)/K_{H,g} \isoarrow \bigsqcup_i D_W.
\eeq
For each $i$ and $\e$, write
\beq\label{HA-compos-bis}
\gamma_\e h_i g = \gamma^{-1}_{i,\e} g_j k,
\eeq
where $j$,  $\gamma_{i,\e}\in G(\Q)_+$, and $k$ depend on $h_i$ and $\gamma_\e$. 
The point $z\in D_W^\e$ in the $i$-th part of the right side of (\ref{H-compo-iso-bis}), with preimage $[z,h_i]$ on the left side, maps to the point 
$\pi_{\Gamma_j}(\gamma_{i,\e}\gamma_\e z)$ on the right side of (\ref{compo-iso}), via
$$[z,h_i] \mapsto [z, h_ig] = [\gamma_\e z, \gamma_\e h_i g] = [\gamma_{i,\e}\gamma_\e z, g_j] \mapsto \pi_{\Gamma_{g_j}}(\gamma_{i,\e}\gamma_\e z).$$
  
\begin{lem}\label{new.lem.4.1-bis}  If $n=m$, then,  under the isomorphism (\ref{compo-iso}),  
\beq\label{new.lem.4.1}
Z(W,g,K)^\nat = \sum_{\e}\sum_i   Z(\gamma_{i,\e}\gamma_\e W)_{\Gamma_{g_j}}.
\eeq
\end{lem}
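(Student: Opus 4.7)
The plan is to unwind the definitions carefully, exploiting the fact that when $n=m$ the group $H(\R)$ is connected yet acts trivially on the finite set $D_W$.

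First, I would establish the identification in (\ref{H-compo-iso-bis}). In the case $n=m$, $W^\perp_j$ is a $2$-dimensional quadratic space of signature $(0,2)$ for $j\le d_+$, so $H\simeq R_{F/\Q}\GSpin(W^\perp)$ has $H(\R)\simeq \GSpin(2)^d$, which is connected; hence $H(\Q)=H(\Q)_+$. Moreover, each factor $D_W^{(j)}$ consists of the two orientations of the single plane $W^\perp_j$ itself, so $D_W$ is a finite set of $2^{d_+}$ isolated points and $H(\R)$ acts trivially on it. Consequently the double quotient collapses to $D_W\times\big(H(\Q)_+\back H(\A_f)/K_{H,g}\big)$, and choosing the $h_i$'s of (\ref{HA-double}) as representatives gives $Z(W,g,K)^\nat = \bigsqcup_i D_W = \bigsqcup_i\bigsqcup_\e D_W^\e$, with each $D_W^\e$ a single point.

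Next, I would trace a point $(z,h_i)$ with $z\in D_W^\e$ through the natural map $[z,h]\mapsto[z,hg]$ into $G(\Q)\back D\times G(\A_f)/K$ and then through the component isomorphism (\ref{compo-iso}). Using the $G(\Q)$-action we rewrite $[z,h_ig] = [\gamma_\e z,\gamma_\e h_i g]$. Factoring $\gamma_\e h_i g = \gamma_{i,\e}^{-1} g_j k$ as in (\ref{HA-compos-bis}), with $\gamma_{i,\e}\in G(\Q)_+$ and $k\in K$, gives $[\gamma_{i,\e}\gamma_\e z, g_j]$, which maps under (\ref{compo-iso}) to $\pi_{\Gamma_{g_j}}(\gamma_{i,\e}\gamma_\e z)$. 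Since $z\in D_W$ implies $\gamma_\e z\in D_{\gamma_\e W}$, and since $\gamma_\e z\in D^+$ by construction of $\gamma_\e$, while $\gamma_{i,\e}\in G(\Q)_+$ preserves $D^+$ and transports incidence, the point $\gamma_{i,\e}\gamma_\e z$ lies in $D^+_{\gamma_{i,\e}\gamma_\e W}$. Hence the image is a point of the connected cycle $Z(\gamma_{i,\e}\gamma_\e W)_{\Gamma_{g_j}}$, which is a $0$-cycle in this case since $\dim D^+_{\gamma_{i,\e}\gamma_\e W}=0$.

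Summing the contributions over $i$ and $\e$ assembles the right-hand side of (\ref{new.lem.4.1}). The only subtle point, and the essential contrast with the $n<m$ case of the preceding lemma, is that there $H(\R)$ had $2^{d_+}$ connected components and one could choose an element $\gamma_{\e,H}\in H(\Q)$ to move $D_W^\e$ onto $D_W^+$ inside $H$, thereby absorbing the $\e$-sum into the $i$-sum. Here $H(\R)$ is connected and fixes each point of $D_W$, so no such absorption is available and the sum over $\e$ must appear explicitly on the right-hand side. There is no real obstacle in the proof; it is essentially a bookkeeping exercise once this distinction is understood.
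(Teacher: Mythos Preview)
Your proposal is correct and follows essentially the same route as the paper: the argument is given in the discussion immediately preceding the lemma, where one uses $H(\Q)=H(\Q)_+$ to obtain the decomposition (\ref{H-compo-iso-bis}) and then traces $[z,h_i]$ with $z\in D_W^\e$ through the chain $[z,h_ig]=[\gamma_\e z,\gamma_\e h_ig]=[\gamma_{i,\e}\gamma_\e z,g_j]\mapsto \pi_{\Gamma_{g_j}}(\gamma_{i,\e}\gamma_\e z)$ via (\ref{HA-compos-bis}). Your explanatory remark on why the $\e$-sum cannot be absorbed into the $i$-sum, in contrast to the $n<m$ case, is a helpful addition but is not needed for the proof itself.
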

These expressions for the natural cycles in terms of connected cycles are the analogue of parts (i) and (ii) of Lemma~4.1 of \cite{K.duke}. 

Next we consider the weighted cycles.   
The discussion of Section 5 of \cite{K.duke} carries over with minor changes. 
First suppose that $T\in \Sym_n(F)$ is totally positive definite and that $\O_T(F)\neq \emptyset$, where $\O_T$ is the hyperboloid 
in $V^n$, as in (\ref{def-O-T}).  Fix $x_0\in \O_T(F)$. Then  $\O_T(\A_f) = G(\A_f)\cdot x_0$ and, for a $K$-invariant Schwartz function 
$\ph\in S(V(\A_f)^n)$, write
$$(\text{supp}\, \ph)\cap \O_T(\A_f) = \bigsqcup_r \ K\cdot \xi_r^{-1}\cdot x_0,$$
with $\xi_r\in G(\A_f)$ as in (5.4) of \cite{K.duke}. 
Consider the weighted sum of natural cycles 
\beq\label{def-nat-wt-cycles}
Z(T,\ph,K)^{\nat} := \sum_r \ph(\xi_r^{-1} x_0)\,Z(W(x_0),\xi_r,K)^\nat,
\eeq
as in Definition~5.2 of \cite{K.duke}.  

\begin{prop}\label{prop10.2} The two definitions of weighted special cycles  (\ref{def-wt-cycle}) and (\ref{def-nat-wt-cycles}) coincide,
$$Z(T,\ph,K) =Z(T,\ph,K)^{\nat}.$$
\end{prop}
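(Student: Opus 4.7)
The approach is to work component by component using the identification $S_K \simeq \bigsqcup_j \Gamma_{g_j} \backslash D^+$ from (\ref{compo-iso}). Since $T$ is totally positive definite, every $x \in \O_T(F)$ has $r(x) = n$, so the factor $\cbold_{\Gamma_{g_j}}^{\,n-r(x)}$ in (\ref{def-wt-cycle}) is trivial; the task reduces to showing that, for each $j$, the $j$th component of $Z(T,\ph,K)^{\nat}$ equals
\[
\sum_{y \in \O_T(F)/\Gamma_{g_j}} \ph(g_j^{-1} y)\,[Z(W(y))_{\Gamma_{g_j}}].
\]

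The main step is an unwinding. Assume first $n<m$. For each $r$, apply (\ref{HA-double}) with $g=\xi_r$ to obtain double coset representatives $h_i$ for $H(\Q)_+\backslash H(\A_f)/K_{H,\xi_r}$, and write $h_i\xi_r = \gamma_i^{-1} g_{j(i,r)} k_i$ as in (\ref{new.lem.4.1-a}). Set $y_{i,r} := \gamma_i x_0 \in \O_T(F)$. Then $W(y_{i,r}) = \gamma_i W(x_0)$, and because $h_i\in H(\A_f)$ pointwise fixes $x_0$,
\[
g_{j(i,r)}^{-1} y_{i,r} \;=\; k_i\,\xi_r^{-1}\,h_i^{-1} x_0 \;=\; k_i\,\xi_r^{-1} x_0,
\]
so $K$-invariance of $\ph$ gives $\ph(\xi_r^{-1} x_0) = \ph(g_{j(i,r)}^{-1} y_{i,r})$. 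Substituting these identities into (\ref{def-nat-wt-cycles}) via (\ref{new.lem.4.1-a}) and regrouping by $j=j(i,r)$ presents $Z(T,\ph,K)^{\nat}$ as a double sum over such pairs.

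What remains is the combinatorial heart of the proof: for fixed $j$, the assignment $(r,i)\mapsto [y_{i,r}]$ is a bijection from $\{(r,i):j(i,r)=j\}$ onto the set of $\Gamma_{g_j}$-orbits of $y\in\O_T(F)$ with $g_j^{-1} y\in K\cdot \mathrm{supp}(\ph)$. Well-definedness and injectivity follow from disjointness of the $K$-orbits $K\xi_r^{-1} x_0$ (which forces $r$ to be determined by $g_j^{-1}y$ modulo $K$) together with disjointness of the double cosets $H(\Q)_+ h_i K_{H,\xi_r}$. Surjectivity uses the Witt/Hasse principle for the $G$-homogeneous space $\O_T$: any $y\in\O_T(F)$ can be written $y=\gamma x_0$ for some $\gamma\in G(\Q)_+$, unique modulo $H(\Q)_+$; if additionally $g_j^{-1} y = k\,\xi_r^{-1} x_0$, then $h := \gamma^{-1} g_j k\,\xi_r^{-1}$ lies in $H(\A_f)$, and its class in $H(\Q)_+\backslash H(\A_f)/K_{H,\xi_r}$ picks out the required index $i$.

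For $n=m$, the natural cycle decomposes via (\ref{new.lem.4.1}), which carries an additional sum over the component labels $\e$ because $H(\R)$ is connected and acts trivially on the finite set $D_W$. The same argument applies after replacing $y_{i,r}$ by $y_{i,\e,r} := \gamma_{i,\e}\gamma_\e x_0$ using the relation (\ref{HA-compos-bis}); the identity $g_j^{-1} y_{i,\e,r} = k\,\xi_r^{-1} x_0$ continues to hold verbatim and the $K$-invariance argument is unchanged. The main obstacle is the bookkeeping here: one must verify that the bijection from $(r,\e,i)$-triples to $\Gamma_{g_j}$-orbits of $y$ is independent of the auxiliary choices of $h_i$, $\gamma_\e$, and $\gamma_{i,\e}$, and in particular absorbs the potential ambiguity between different $\e$'s that arises from the narrower Witt lift in the top-degree case. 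This is precisely where the careful treatment of connected components in Lemma~\ref{new.lem.4.1-bis} is essential.
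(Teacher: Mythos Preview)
Your approach is essentially the same as the paper's: both hinge on the bijection between $\Gamma_{g_j}$-orbits in $\O_T(F)$ (restricted to the support condition) and $H(\Q)_+$-$K_{H,\xi_r}$ double cosets in $H(\A_f)$, which the paper cites as Weil's Lemma (\cite{K.duke}, Lemma~5.7(ii)) and you reconstruct by hand. The paper runs the argument in the opposite direction (starting from the connected-cycle side) and gives the explicit $n=m$ bookkeeping via the orientation function $\e(x)$ on $\O_T(F)$ that indexes the $2^{d_+}$ orbits of $G(\Q)_+$, which is exactly the ``narrower Witt lift'' issue you flag but leave unspecified.
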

\begin{proof}  We only give the proof in the case $n=m$ and $T\in \Sym_m(F)_{>0}$ totally positive definite. The remaining cases follow 
by similar arguments. 
Let $\nu: G \ra R_{F/\Q}\G_m$ denote the spinor norm and note that 
$$\nu(G(\Q)) = \{ \a\in F^\times \mid \s_j(\a)>0, \forall j>d_+\ \}.$$
This implies that $G(\Q)_+$ has $2^{d_+}$-orbits in $\O_T(F)$. These can be indexed as follows. Fix an (ordered) $F$-basis for $V$. This basis 
determines an orientation for $V_j$ for all $j$. An $n$-frame $x\in \O_T(F)$ determines an orientation of $W(x)\tt_{F,\s_j}\R$ for all $j$ and 
hence, since an orientation of $V_j$ has been fixed,  an orientation for $W(x)^\perp \tt_{F,\s_j}\R$. Thus we obtain a point in $D^\e=D^{\e(x)}$. 
This defines a function $\e:\O_T(F) \ra (\pm1)^{d_+}$ which distinguishes the $G(\Q)_+$-orbits.

We can write 
\beq\label{old-wt-cycle}
Z(T,\ph,K) = \sum_\e\sum_j \sum_{\substack{x\in \O_T(F)^\e \\ \snass \mod \Gamma_{g_j}}} \ph(g_j^{-1} x) \, Z(W(x))_{\Gamma_{g_j}},
\eeq
where we note that the only $x$'s that contribute to the $j$-th summand are those in the set 
$$\O_T(F)^\e  \cap \big(\ g_j\text{supp}(\ph)\cap \O_T(\A_f)\ \big) = \bigsqcup_r  \ G(\Q)_+\gamma_\e\, x_0 \cap g_j  K \xi_r^{-1}\, x_0.$$
Here we suppose that $\e(x_0) = (+1, \dots, +1)$. 
Thus we have 
$$Z(T,\ph,K) = \sum_\e\sum_j \sum_r\sum_{\substack{x\in \O_T(F)^\e  \cap g_j \cdot K\cdot \xi_r^{-1}\cdot x_0 \\ \snass \mod \Gamma_{g_j}}} \ph(g_j^{-1} x) \, Z(W(x))_{\Gamma_{g_j}}.$$

Now, by Weil's Lemma, \cite{K.duke}, Lemma~5.7 (ii), there is a bijection
\beq\label{Weil-lem}
\begin{matrix} \text{$\Gamma_{g_j}$-orbits in $G(\Q)_+\gamma_\e\, x_0 \cap g_j  K \xi_r^{-1}\, x_0$,}\\
\nass
\nass
\updownarrow\\
\nass
\nass
H(\Q)\back \big(\ H(\A_f) \cap \gamma_\e^{-1} G(\Q)_+ g_j K \xi_r^{-1}\ \big)/ K_{H, \xi_r}, 
\end{matrix}
\eeq
given by 
$$\Gamma_{g_j}\cdot x \ \longmapsto \ H(\Q) \gamma_\e^{-1}\gamma^{-1} g_j k \xi_r^{-1} K_{H,\xi_r}, \qquad x  =\gamma \gamma_\e x_0 = g_jk\xi_r^{-1} x_0.$$
Here note that the two expressions for $x$ insure that $\gamma_\e^{-1}\gamma^{-1} g_j k \xi_r^{-1}\in H(\A_f)$ and that 
$$\ph(g_j^{-1} x) = \ph(\xi_r^{-1} x_0).$$
We can then rearrange the sum as 
$$Z(T,\ph,K) = \sum_r\ph(\xi_r^{-1} x_0)\sum_\e\sum_j \sum_{\substack{x\in \O_T(F)^\e  \cap g_j \cdot K\cdot \xi_r^{-1}\cdot x_0 \\ \snass \mod \Gamma_{g_j}}} Z(W(x))_{\Gamma_{g_j}}.$$
Now, for each $\e$ and $r$, 
$$H(\A_f) = \bigsqcup_j \ H(\A_f) \cap \gamma_\e^{-1} G(\Q)_+ g_j K \xi_r^{-1},$$
where, we see that subsets on the right side give the elements of $h\in H(\A_f)$ such that $\gamma_\e h \xi_r$ lies in $G(\Q)_+ g_j K$.   
Returning to (\ref{HA-double}) with $g=\xi_r$, and writing a double coset representative as $h_i = \gamma_\e^{-1}\gamma_{i,\e}^{-1} g_j k\xi_r^{-1}$, 
the corresponding $x$ is $\gamma_{i,\e}\gamma_\e x_0$. 
Indexing the sum by the double cosets, we have 
\begin{align*}
Z(T,\ph,K) &= \sum_r\ph(\xi_r^{-1} x_0)\sum_\e\sum_i 
Z(W(\gamma_{i,\e}\gamma_\e x_0))_{\Gamma_{g_j}}\\
\nass
{}&= \sum_r\ph(\xi_r^{-1} x_0) \, Z(W(x_0),\xi_r,K)^\nat\\
\nass
{}&=Z(T,\ph,K)^{\nat},
\end{align*}
by Lemma~\ref{new.lem.4.1-bis}. 
\end{proof}


\begin{thebibliography}{99}

\baselineskip=12pt

\bibitem{aoki}
H. Aoki, 
\textsl{Estimating Siegel modular forms of genus $2$ using Jacobi forms},  J. Math. Kyoto Univ. {\bf 40} (2000), 581--588.

\bibitem{AMRT}
A. Ash, D. Mumford, M. Rapoport, and Y. Tai, 
Smooth Compactification of Locally Symmetric Varieties, Second Edition, Cambridge Univ. Press, Cambridge, UK, 2010. 

\bibitem{baily.HS}
W. L. Baily, 
\textsl{On the Hilbert-Siegel modular space}, 
Amer. J. Math. {\bf 81} (1959), 846--874. 

\bibitem{baily.borel}
W.L.Baily and A. Borel, 
\textsl{Compactification of arithmetic quotients of bounded symmetric domains}, Ann. of Math. {\bf 84} (1966), 442--528.

\bibitem{Bo1}
R. E. Borcherds, 
\textsl{Automorphic forms with singularities on
Grassmannians}, Inventiones  Math. {\bf 132}, (1998), 491--562.

\bibitem{Bo2}
\bysame, 
\textsl{The Gross-Kohnen-Zagier
  theorem in higher dimensions},  Duke Math. J. {\bf 97}, (1999),   219--233.
 

\bibitem{bruinier-raum}
J. H. Bruinier and M. Westerholt-Raum, 
\textsl{Kudla's modularity conjecture and formal Fourier-Jacobi series},
Forum Math. Pi {\bf 3} (2015).

\bibitem{BHKRY-I}
J. Bruinier, B. Howard, S. Kudla, M. Rapoport and T. Yang, 
\textsl{Modularity of generating series of divisors on unitary Shimura varieties}, 
results of an AIM SQuaRE's project, arXiv:1702.07812.

\bibitem{BHKRY-II}
\bysame, 
\textsl{Modularity of generating series of divisors on unitary Shimura varieties II: arithmetic applications}, 
results of an AIM SQuaRE's project, arXiv:1710.00628. 


\bibitem{freitag.keihl}
E. Freitag and R. Keihl, 
\textsl{Algebraische Eigenschaften der lokalen Ringe in den Spitzen der Hilbertschen Modulgruppen}, 
Inventiones math. {\bf 24}, (1974), 121-148.

\bibitem{fulton.book}
W. Fulton, 
Intersection Theory, Ergebnisse der Mathematik und ihrer Grenzgebiete {\bf 2}, Springer-Verlag, Berlin, 1984.
 
 
 \bibitem{ibukiyama}
 T. Ibukiyama, C. Poor, and D. S. Yuen, 
 \textsl{Jacob forms that characterize paramodular forms}, 
 Abh. Math. Semin. Univ. Hamb. {\bf 83} (2013), 111--128. 
 
\bibitem{knoeller}
F. W. Kn\"oller, 
\textsl{Multiplizit\"aten ``unendlich-ferner" Spitzen}, Montashefte f\"ur Mathematik, {\bf 88}, (1979), 7--26.

\bibitem{K.duke} 
S. Kudla,
\textsl{Algebraic cycles on Shimura varieties of orthogonal type},  Duke
Math. J.  {\bf 86}  (1997),  no. 1, 39--78.

\bibitem{K.Bints} 
\bysame, 
\textsl{Integrals of Borcherds forms}, 
Compositio Math. {\bf 137} (2003), 293--349. 

\bibitem{KM1}
S. Kudla and J. Millson, 
\textsl{The theta correspondence and harmonic forms I}, 
Math. Annalen, {\bf 274} (1986), 353--378.

\bibitem{KM2}
\bysame,
\textsl{The theta correspondence and harmonic forms II}, Math. Annalen, 
{\bf277} (1987), 267--314.

\bibitem{KM3}
\bysame, 
\textsl{Intersection numbers for cycles in locally symmetric spaces and Fourier coefficients of 
holomorphic modular forms in several variables}, 
Publ. math.  IHES, {\bf 71} (1990), 121--172. 

\bibitem{KRYbook}
S. Kudla, M. Rapoport, and T. Yang, 
Modular Forms and Special Cycles  
on Shimura Curves, 
Annals of Math. Studies,  {\bf 161}, Princeton Univ.  
Press, Princeton 2006.

\bibitem{Li}
Jian-Shu Li, 
\textsl{Theta lifting for unitary representations with nonzero cohomology}, Duke Math. J. {\bf 61} (1990), 913--936.

\bibitem{millson.rag}
J. Millson and M. S. Raghunathan, 
\textsl{Geometric construction of cohomology of arithmetic groups}, 
Geometry and Analysis (Papers dedicated to the memory of Patodi), Indian Academy of Sciences, Bangalore, 1980, pp 103--123.

\bibitem{RS}
J. Rohlfs and J. Schwermer, 
\textsl{Intersection numbers of special cycles}, J. AMS, {\bf 6} (1993), 755--778. 
\bibitem{VZ}
D. Vogan and G. Zuckerman, 
\textsl{Unitary representations with non-zero cohomology}, 
Compositio Math. {\bf 53} (1984), 51--90. 

\bibitem{YZZ}
Xinyi Yuan, Shou-Wu Zhang and Wei Zhang, 
\textsl{The Gross-Kohnen-Zagier Theorem over totally real fields}, 
Compositio Math., {\bf 145} (2009), 1147--1162. 

\bibitem{wei.zhang.thesis}
Wei Zhang, 
\textsl{Modularity of generating functions of special cycles on Shimura varieties}, Ph.D. thesis, Columbia University (2009). 

\end{thebibliography}
\end{document}